\newcommand{\jmq}[1]{ {\color{cyan} #1} }
\title{Matrices of linear forms of constant rank from vector bundles 
on projective spaces}
\author[L. Manivel]{Laurent Manivel}
\address{Paul Sabatier University
118, route de Narbonne.
F-31062 Toulouse Cedex~9.
France}
\email{manivel@math.cnrs.fr}
 \author[R.\ M.\ Mir\'o-Roig]{Rosa M.\ Mir\'o-Roig} 
  \address{Facultat de
  Matem\`atiques i Inform\`atica, Universitat de Barcelona, Gran Via des les
  Corts Catalanes 585, 08007 Barcelona, Spain} \email{miro@ub.edu, ORCID 0000-0003-1375-6547}
\date{\today}
\theoremstyle{plain}
\newtheorem{theorem}{Theorem}
\newtheorem{prop}[theorem]{Proposition}
\newtheorem{lemma}[theorem]{Lemma}
\newtheorem{coro}[theorem]{Corollary}
\def\CC{{\mathbb{C}}}
\def\PP{{\mathbb{P}}}
\def\ZZ{{\mathbb{Z}}}
\def\cO{{\mathcal{O}}}
\def\cE{{\mathcal{E}}}
\def\cF{{\mathcal{F}}}\def\cX{{\mathcal{X}}}
\def\cG{{\mathcal{G}}}\def\cK{{\mathcal{K}}}
\def\cL{{\mathcal{L}}}
\def\cR{{\mathcal{R}}}
\def\cC{{\mathcal{C}}}\def\cQ{{\mathcal{Q}}}
\def\cM{{\mathcal{M}}}
\def\ra{{\rightarrow}}\def\lar{{\leftarrow}}
\def\lra{{\longrightarrow}}
\def\fsl{\mathfrak{sl}}
\def\om{\omega}
\def\lau#1{\textcolor{green}{\;{\bf Laurent:} #1 {\bf }}}
 \def\cD{{\mathcal D}}
\def\BC{\mathbb C}
\def\BP{\mathbb P}
\def\pp#1{\mathbb P^{#1}}
\def\pp#1{{\mathbb P}^{#1}}
\def\tdim{{\rm dim}}
\def\ww{\wedge}
\def\cA{{\mathcal A}}
\def\cE{{\mathcal E}}
\def\cF{{\mathcal F}}
\def\cG{{\mathcal G}}
\def\cR{{\mathcal R}}
\def\cL{{\mathcal L}}
\def\cO{{\mathcal O}}
\def\CC{\mathbb C}
\def\ZZ{\mathbb Z}
\def\11{\mathbf 1}
\def\PP{\mathbb P}
\def\fsl{{\mathfrak {sl}}}
\def\a{\alpha}
\def\ot{{\mathord{ \otimes } }}
\def\op{{\mathord{\,\oplus }\,}}
\def\lra{{\mathord{\;\longrightarrow\;}}}
\def\ra{{\mathord{\;\rightarrow\;}}}
\def\dim{{\rm dim}\;}
\def\La#1{\Lambda^{#1}}
\def\frak{\mathfrak}
\def\fsl{\frak s\frak l}
\def\op{\oplus}
\def\op{\oplus}
\def\ul{\underline}
\def\t{\tau}
\def\a{\alpha}
\def\BP{\mathbb  P}
\def\BC{\mathbb  C}
\def\pp#1{\mathbb  P^{#1}}
\def\cC{\mathcal  C}
\def\cQ{\mathcal  Q}
\def\hd{, \dotsc ,}
\def\La#1{\Lambda^{#1}}
\def\pp#1{\mathbb  P^{#1}}
\def\ur{\underline{\mathbf{R}}}
\def\ra{\rightarrow}
\def\tdim{\operatorname{dim}}
\def\tmod{\operatorname{mod}}
\def\thom{\operatorname{Hom}}
\def\ww{\wedge}
\def\be{\begin{equation}}
\def\ene{\end{equation}}
\newcommand{\isom}{\cong}
\def\cK{{\mathcal K}}
\newcommand{\red}[1]{{\color{red}#1}}
\theoremstyle{remark}
\newtheorem{remark}[theorem]{Remark}
\newtheorem{example}[theorem]{Example}
\begin{document}

\begin{abstract}
    We consider the problem of constructing matrices of linear forms of constant rank by focusing on the associated vector bundles on projective spaces. Important examples are given by the classical Steiner bundles, as well as 
    some special (duals of) syzygy bundles that we call Drézet bundles. Using the 
    classification of globally generated vector bundles with small first Chern class
    on projective spaces, we are able to describe completely the indecomposable 
    matrices of constant rank up to six; some of them come from rigid homogeneous vector bundles, some other from Drézet bundles related either to plane quartics or to instanton bundles on $\PP^3$. 
\end{abstract}

\maketitle

It is a classical problem in both algebraic geometry and linear algebra  to construct linear spaces of matrices of 
constant rank (constant outside the origin).  
The seminal modern reference for this problem is \cite{MR954659}, as well as for the closely related problem of constructing linear spaces of matrices of bounded
rank (in the sense that the rank is never maximal). It was followed 
by an important literature, to which this paper is a contribution. 

Two dimensional spaces of matrices (including those of constant or bounded rank,
of course) were classified by Kronecker and Weierstrass. 
Atkinson \cite{MR695915} classified spaces of bounded rank at most three.
He showed that there is a unique primitive  three dimensional space of 
constant rank two, $\La 2\BC^3\subset End(\BC^3)$, 
and a unique primitive four dimensional space of constant rank three 
corresponding to the natural  inclusion
$\BC^4\hookrightarrow \thom(\BC^{4}, \La 2 \BC^4)$.
All other spaces are obtained by restricting these two subspaces.
More generally, $\CC^n\hookrightarrow \thom(\La k \CC^n,\La {k+1}\CC^n)$ has always constant rank. This example, dating at least back to Westwick \cite{MR878293}, was vastly generalized in \cite{landsberg-manivel-bounded}. 

Beyond that, only  isolated examples were known.
Westwick in \cite{MR1374258} produced a $4$-dimensional space of 
skew-symmetric $10\times 10$ matrices of constant rank $8$ that remained mysterious
until 2012 when Boralevi, Faenzi, and Mezzetti \cite{MR3107531} gave an explanation for it in terms of instanton bundles and generalized it to a family of such. 
They also  found   four dimensional families
of $14\times 14$ skew-symmetric matrices of constant rank $12$.

Eisenbud and Harris \cite{MR954659} 
also observed that spaces of syzygies furnish spaces of bounded rank, at least in 
the case where the syzygies in some degree are only linear. We can even get 
matrices of constant rank by restricting to suitable linear subspaces. 
Many instances of such linear syzygies are known, suggesting that the classification of matrices of constant rank must be  a wild problem. 

\medskip
There is a strong 
relationship with the study of vector bundles on projective spaces, another classical topic 
that attracts considerable attention. Indeed, a vector space of dimension $n+1$ of matrices
of size $a\times b$ can be seen as a matrix with linear entries, 
or equivalently as a 
morphism of sheaves $\psi: \cO_{\PP^n}^{\oplus a}\lra \cO_{\PP^n}(1)^{\oplus b}$. If the rank is constant, equal to $e$, the image of this 
morphism is a vector bundle $\cE$ of rank $e$. Letting $\cK$ and $\cC$ denote the kernel and cokernel bundles, respectively,
we get a diagram

$$\xymatrix{
%0\ar[rd] & & & & & & 0\\
& \cK \ar[rd] &&& & \cC & \\
& & \cO_{\PP^n}^{\oplus a}\ar[rr]^\psi\ar[rd] & 
&\cO_{\PP^n}(1)^{\oplus b}\ar[ru] & &\\
& & & \cE\ar[ru]& && \\
% & &0\ar[ru] & & 0 & &
 }$$
 
\noindent whose diagonals are short exact sequences. The vector bundle $\cE$ has very special properties, in particular:
\begin{itemize}
    \item $\cE$ and $\cE^\vee(1)$ are generated by global sections;
    \item as a consequence, $\cE$ is uniform, in the sense that its restriction to every line $L\subset\PP^n$ 
    splits in the same way, as
    $$\cE_{|L}\simeq \cO_L(1)^{\oplus c_1(E)}\oplus \cO_L^{\oplus (e-c_1(E))}.$$
\end{itemize}
Uniform vector bundles have been classified up to rank $e\le n+1$. In this range, for $n\ge 3$ they are sums of 
line bundles and the tautological quotient bundle $Q$ or its dual (see \cite{MR3536970} and references therein. In that paper the same result is conjectured to hold for $n\ge 5$ and $e<2n$).   The general philosophy is that there should exist very few 
uniform vector bundles on $\PP^n$ of small rank, but they are much easier to construct when the 
rank is large. 
Conversely, if $\cE$ is a rank $e$ vector bundle on $\PP^n$, such that 
$\cE$ and $ \cE^\vee(1)$ are generated by global sections (so that in particular $\cE$ is uniform), 
the natural morphism
$$\psi_\cE: H^0(\PP^n,\cE)\otimes\cO_{\PP^n}\longrightarrow H^0(\PP^n,\cE^\vee(1))^\vee\otimes\cO_{\PP^n}(1)$$
has constant rank $e$. This yields a nice recipe to construct matrices 
of constant rank, that we have applied concretely on a series of significant examples.

\medskip 
The point of view chosen in this paper is to understand families of matrices of constant rank in terms of  the corresponding vector bundles and their moduli spaces. Indeed, if we have family of matrices of constant rank, parametrized by a certain scheme $\cX$, there is an induced morphism $\cX\lra {\mathcal Quot}(\PP^n)$, mapping $x\in \cX$ to a rank $e$ vector
bundle $\cE_x$ on $\PP^n$. If moreover $ \cE_x$ is semistable for each $x\in \cX$, we even get a 
morphism $\cX\lra\cM(e,c)$ to the coarse moduli space of rank $e$ semistable vector bundles on $\PP^n$ with 
 total Chern class $c$. Conversely, we deduce that there is an open subset of the space of matrices of 
constant rank $e$ that is essentially parametrized by some locally closed subset 
$\cM(e,c)_{gen}$ of  $\cM(e,c)$. 
Indeed, after restricting if necessary
to an open subset of the family, we may suppose that $h=h^0(\cE_x)$ and $k=h^0(\cE_x^\vee(1))$ are constant. Then locally, 
$\psi_{\cE_x}$ only deforms to morphisms of the same type among spaces  of matrices of size
$h\times k$ and of  constant rank $e$. A similar conclusion holds for matrices of size $a\times b$ with $a\le h$ and $b\le k$
giving rise to vector bundles from the same family; we get a moduli space 
which maps to $\cM(e,c)_{gen}$
with fibers that are open subsets of the product of Grassmannians $Gr(a,h)\times Gr(b,k)$.\medskip 

Our main results are classification theorems for matrices of linear forms of
constant rank $e\le 6$. To obtain these results we 
build on the classifications of globally generated vector bundles on projective spaces with small first Chern class that were 
obtained some years ago \cite{anghel-manolache, manolache, anghel-coanda-manolache, su1, su2}. Given such a vector bundle $\cE$, the main problem becomes: can $\cE^\vee(1)$ 
also be globally generated? Important instances are given by the so-called 
Steiner bundles, defined by short exact sequences of type 
$$0\ra\cO_{\PP^n}(-1)^{\oplus c}\ra \cO_{\PP^n}^{\oplus e+c}\ra \cE\ra 0,$$
a family of vector bundles that has already been well-studied \cite{MR1240599, miroroig-marchesi}. A closely related family is that of vector bundles defined by an
exact sequence of type 
$$0\ra\cO_{\PP^n}(-c)\ra \cO_{\PP^n}^{\oplus e+1}\ra \cE\ra 0,$$
which are special classes of (duals of) syzygy bundles; we call them
Drézet bundles because they were used by Drézet to construct examples 
of uniform, non-homogeneous vector bundles \cite{drezet-uniform}. After a section 
of preliminaries, we study Steiner and Drézet bundles under our perspective. 
Then we focus on vector bundles with first Chern class $c=2$ and we deduce a classification 
of matrices of linear forms of constant rank four or five.

\begin{theorem}
Given an indecomposable matrix of linear forms of constant rank $4$, 
the associated vector bundle $\cE$ must be, up to switching with $\cE^\vee(1)$, 
either 
\begin{enumerate}
    \item ($c_1=1$) the quotient bundle $Q$ on $\PP^4$, or
    \item ($c_1=2$)  the unique (up to isomorphism) Drézet bundle of rank
    four on $\PP^2$ defined by a non-degenerate conic. 
\end{enumerate}
For an indecomposable matrix of linear forms of constant rank $5$, 
the associated vector  bundle $\cE$ must be, up to switching with $\cE^\vee(1)$, 
\begin{enumerate}
    \item ($c_1=1$) the quotient bundle $Q$ on $\PP^5$, or
    \item ($c_1=2$)  the unique homogeneous  rank
    five Drézet bundle on $\PP^2$. 
\end{enumerate}
\end{theorem}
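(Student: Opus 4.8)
The plan is to run entirely through the two invariants that the introduction attaches to a constant rank matrix: the bundle $\cE$ and $\cE^\vee(1)$ are both globally generated, $\cE$ is uniform with splitting type $\cO_L(1)^{\oplus c_1}\oplus\cO_L^{\oplus(e-c_1)}$ on a line $L$, and the operation $\cE\leftrightarrow\cE^\vee(1)$ preserves all of this while sending $c_1=c_1(\cE)$ to $e-c_1$. A globally generated bundle with $c_1=0$ is trivial, hence decomposable once $e>1$, so for an indecomposable $\cE$ one has $1\le c_1\le e-1$; combining this with the switch I may assume $1\le c_1\le\lfloor e/2\rfloor$. For $e=4$ and $e=5$ this leaves exactly the two values $c_1=1$ and $c_1=2$, and the argument splits accordingly.

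For $c_1=1$ I would invoke the classification of globally generated bundles with first Chern class one: on $\PP^n$ the only indecomposable such bundle of rank at least two is the tautological quotient bundle $Q$, of rank $n$. Since $Q^\vee(1)\cong\Omega^1_{\PP^n}(2)$ is again globally generated, $Q$ genuinely arises from a constant rank matrix via the recipe $\psi_Q$, and the rank constraint $\operatorname{rank}Q=n=e$ forces $n=4$ in the rank four statement and $n=5$ in the rank five statement. This settles the $c_1=1$ branch of both assertions.

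The substantial case is $c_1=2$. Here I would feed in the classification of globally generated bundles with $c_1=2$ on projective spaces from the cited works, together with the study of Steiner and Drézet bundles carried out in the preceding sections, and scan it for indecomposable members of rank $e$ ($=4$ or $5$). For each such candidate the decisive test is whether $\cE^\vee(1)$ is \emph{also} globally generated. This condition is highly restrictive and should eliminate every candidate except the Drézet bundles on $\PP^2$, governed by $0\to\cO_{\PP^2}(-2)\to\cO_{\PP^2}^{\oplus(e+1)}\to\cE\to 0$, that is, dually, by $e+1$ quadrics on $\PP^2$. The global generation of $\cE^\vee(1)$ then translates into a base-point-freeness/nondegeneracy property of that linear system of conics. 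For $e=4$ the system is a hyperplane in the six-dimensional $H^0(\cO_{\PP^2}(2))$, i.e.\ the annihilator of a single conic, and $\cE^\vee(1)$ is globally generated precisely when that conic is nondegenerate; transitivity of $\mathrm{PGL}_3$ on nondegenerate conics then yields a unique bundle. For $e=5$ the $e+1=6$ quadrics must exhaust all of $H^0(\cO_{\PP^2}(2))$, giving the single $\mathrm{SL}_3$-homogeneous system, hence the unique homogeneous rank five Drézet bundle.

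The main obstacle is exactly this $c_1=2$ sweep: one must traverse the classification on each $\PP^n$ relevant to ranks four and five, discard the decomposable entries (such as $Q\oplus\cO(1)$) and those of the wrong rank, and then—most delicately—prove that global generation of $\cE^\vee(1)$ \emph{fails} for all remaining candidates other than the two $\PP^2$ Drézet bundles, while \emph{verifying} it for those two and pinning down their projective models (the nondegenerate conic, respectively the complete linear system). By contrast, the reduction step and the $c_1=1$ case are routine.
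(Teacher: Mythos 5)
Your overall route coincides with the paper's: reduce to $c_1\le 2$ by the transposition symmetry $\cE\leftrightarrow\cE^\vee(1)$, identify $Q$ on $\PP^4$ (resp.\ $\PP^5$) in the $c_1=1$ branch, and in the $c_1=2$ branch invoke the classification of globally generated bundles of rank $e\ge n+2$ (Steiner or Dr\'ezet, with the Steiner case excluded by the $1$-uniformity bound $c+2n-2\le e\le cn$), then test global generation of $\cE^\vee(1)$. The rank four case is essentially complete in your outline: $e=4\le n+1$ forces $n=2$ via the uniform bundle classification, $P$ is a hyperplane of conics, and the nondegeneracy criterion you state is exactly Proposition \ref{corank1}.

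The genuine gap is in the rank five, $c_1=2$ branch. The constraint $e=5\ge n+2$ only gives $n\le 3$, so besides the universal Dr\'ezet bundle on $\PP^2$ you must rule out rank five Dr\'ezet bundles $\cE_P$ on $\PP^3$ defined by a six-dimensional space $P\subset S^2V^\vee$ of quadrics with no common zero. You never address this case: you pass directly to ``the $e+1=6$ quadrics must exhaust $H^0(\cO_{\PP^2}(2))$'', tacitly assuming $n=2$, and the claim that global generation of $\cE^\vee(1)$ ``should eliminate every candidate'' is precisely the nontrivial point. The paper's treatment is genuinely substantive here: first, the Segre class computation $s(\cE_P^\vee(1))=(1+3h)/(1+h)^6$ gives $s_3(\cE_P^\vee(1))\ne 0$, so global generation would force $h^0(\cE_P^\vee(1))\ge e+n=8$, hence $h^1(\cE_P^\vee(1))\ge 4$ since $\chi(\cE_P^\vee(1))=4$; second, Lemma \ref{mac} shows $h^1(\cE_P^\vee(1))=\dim (R/I)_3\le 3$ for any such $P$, by linking the ideal $I$ of the six quadrics to a residual ideal inside a complete intersection of four of them and applying Macaulay's growth inequalities to the Hilbert function of the link. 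Without an argument of this kind, i.e.\ without excluding a four-dimensional space of cubics apolar to six base-point-free quadrics in four variables, the rank five classification is incomplete.
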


The corresponding matrices are described in subsection 4.2 and 4.3. Note the surprising fact that 
in rank five, the possible vector  bundles are unique. In rank four they are unique 
up to isomorphism. Also the homogeneous bundles considered in \cite{landsberg-manivel-bounded} play an important r\^ole in the classification. But when the rank increases there are certainly many more possibilities. 
Already in rank six some vector bundles with non-trivial moduli spaces 
appear in the classification:

\begin{theorem}
Given an indecomposable matrix of linear forms of constant rank $6$, 
the associated vector
bundle $\cE$ must be, up to switching with $\cE^\vee(1)$, of one the following types:
\begin{enumerate}
    \item ($c_1=1$) the quotient bundle $Q$ on $\PP^6$, or
    \item ($c_1=2$) a Drézet bundle on $\PP^3$ defined by a net of quadrics, or
    \item ($c_1=3$) the second exterior power $\wedge^2Q$ 
    on $\PP^4$, 
    \item ($c_1=3$)  a Drézet bundle on $\PP^2$ defined by the cubics 
    apolar to a plane quartic, or
    \item ($c_1=3$)  a non-trivial extension of the quotient bundle $Q$ on $\PP^2$ 
    by a rank 4 non-generic Drézet bundle.
\end{enumerate}
\end{theorem}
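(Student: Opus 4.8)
The plan is to organize the classification by the first Chern class $c_1=c_1(\cE)$. Since $\cE$ and $\cE^\vee(1)$ are globally generated, $\cE$ is uniform of rank $6$ with $\cE_{|L}\simeq\cO_L(1)^{\oplus c_1}\oplus\cO_L^{\oplus(6-c_1)}$, so $0\le c_1\le 6$; switching $\cE$ with $\cE^\vee(1)$ replaces $c_1$ by $6-c_1$, so I may assume $c_1\le 3$. A globally generated bundle with $c_1=0$ is trivial, hence decomposable, so $c_1\in\{1,2,3\}$. I would first dispose of large $n$: for $n\ge 5$ the rank is $\le n+1$, and the classification of uniform bundles in that range forces $\cE$ to be a sum of line bundles, $Q$, $Q^\vee$ and their twists; an indecomposable of rank $6$ then requires $n=6$ and, after switching, $\cE\cong Q$. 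Since the classification of globally generated bundles with $c_1=1$ produces no other rank-$6$ indecomposable, this settles $c_1=1$ and gives case (1). It remains to analyse $c_1=2$ and $c_1=3$, where by the above the relevant bundles live on $\PP^2$, $\PP^3$ or $\PP^4$.

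For $c_1=2$ I would test the candidates from the classification of globally generated rank-$6$ bundles against the single remaining constraint, that $\cE^\vee(1)$ also be globally generated, using the criterion for Drézet bundles established earlier. The surviving candidate is a Drézet bundle on $\PP^3$, presented as $0\to\cO(-2)\to\cO^{\oplus 7}\to\cE\to 0$; dualizing and twisting, the global sections of $\cE^\vee(1)$ are the linear syzygies of the seven defining quadrics, and their generating $\cE^\vee(1)$ is equivalent to the seven quadrics being those apolar to a net of quadrics. This produces case (2); the other candidates are eliminated because their dual twist fails to be globally generated.

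The heart of the proof is $c_1=3$, where the classification list is longest and I expect exactly three survivors. First, $\wedge^2Q$ on $\PP^4$ has rank $6$ and $c_1=3$, and the perfect pairing $\wedge^2Q\otimes\wedge^2Q\to\wedge^4Q=\cO(1)$ gives $(\wedge^2Q)^\vee(1)\cong\wedge^2Q$, so both global-generation conditions hold at once; this is case (3). Second, the Drézet analysis on $\PP^2$, with $0\to\cO(-3)\to\cO^{\oplus 7}\to\cE\to 0$, shows that the linear syzygies of the seven cubics generate $\cE^\vee(1)$ precisely when the cubics are apolar to a plane quartic, giving case (4). The third survivor is not of Drézet type: it is a non-split extension $0\to\cF\to\cE\to Q\to 0$ on $\PP^2$, with $Q$ the rank-$2$ quotient bundle and $\cF$ a rank-$4$ Drézet bundle of $c_1=2$. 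I would identify it by computing $\mathrm{Ext}^1(Q,\cF)$, exhibiting a non-trivial class for which $\cE$ and $\cE^\vee(1)$ remain globally generated, and determining the non-generic condition on $\cF$ under which this occurs; this is case (5).

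The main obstacle is this $c_1=3$ step, on two fronts. First, the classification of globally generated rank-$6$ bundles with $c_1=3$ yields many candidates, and for each discarded one I must show that $\cE^\vee(1)$ fails to be globally generated — a delicate case-by-case cohomological verification rather than one uniform argument. Second, case (5) is genuinely subtle: it does not appear in the classification as an indecomposable building block, so I must recognize the extension structure, control $\mathrm{Ext}^1(Q,\cF)$, and check that both global-generation conditions persist exactly on the non-generic locus, thereby also proving that the extension is non-split (so that $\cE$ is indecomposable) and that no further bundle of this shape is missed.
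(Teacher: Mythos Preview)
Your overall architecture---stratify by $c_1\le 3$ after switching, then invoke the existing classifications of globally generated bundles with small $c_1$ and eliminate candidates for which $\cE^\vee(1)$ fails to be generated---is exactly the paper's approach. But several of the steps you sketch are either wrong or miss the actual mechanism.

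\textbf{The extension in case (5) is reversed.} You write $0\to\cF\to\cE\to Q\to 0$ with $\cF$ the rank-$4$ Dr\'ezet bundle, and propose to compute $\mathrm{Ext}^1(Q,\cF)$. The paper's construction is the opposite: from the mixed presentation $0\to\cO(-1)\oplus\cO(-2)\to\cO^{\oplus 8}\to\cE\to 0$ on $\PP^2$ one obtains a natural injective map $Q\hookrightarrow\cE$, hence $0\to Q\to\cE\to\cE_H\to 0$. The splitting criterion and the verification that $\cE^\vee(1)$ is globally generated are carried out for this sequence; your Ext computation would be for the wrong group. (The theorem's phrasing ``extension of $Q$ by a Dr\'ezet bundle'' is perhaps infelicitous, but the proof is unambiguous.)

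\textbf{Your $c_1=2$ condition is tautological.} On $\PP^3$ one has $\dim S^2V^\vee=10$, so any seven-dimensional $P$ is automatically apolar to a three-dimensional net; this is not a condition. The actual content is that $\cE_P^\vee(1)$ is globally generated for \emph{general} $P$ (equivalently, the associated rank-two kernel bundle is an instanton of charge $3$ with natural cohomology and $\cG(2)$ globally generated). You also do not say why $\PP^2$ and $\PP^4$ are excluded here: on $\PP^2$ there is simply no rank-$6$ Dr\'ezet bundle with $c_1=2$ since $\dim S^2\CC^3=6$; on $\PP^4$ one bounds $h^1(\cE_P^\vee(1))\le 9$, hence $h^0(\cE_P^\vee(1))\le 9<10$.

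\textbf{You do not identify the key exclusion tool.} Several eliminations (Dr\'ezet with $c_1=3$ on $\PP^3$ and $\PP^4$; Dr\'ezet with $c_1=2$ on $\PP^3$ in rank $5$) are not done by a direct global-generation test but by bounding $h^1(\cE^\vee(1))$ via Macaulay's inequalities and linkage on the Artinian quotient $R/I$, then showing there are too few sections. Without this ingredient the ``delicate case-by-case verification'' you anticipate has no engine.

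\textbf{Missing candidates.} For $c_1=3$ the classification produces, beyond Steiner, Dr\'ezet, and $\wedge^2Q$, a \emph{mixed} family $0\to\cO(-1)\oplus\cO(-2)\to\cO^{\oplus e+2}\to\cE\to 0$ (which is where case (5) actually lives, and which must also be excluded on $\PP^3$ and $\PP^4$), and on $\PP^3$ a \emph{Tango-type} family $0\to Q(-1)\oplus\cO(-1)\to\cO^{\oplus e+4}\to\cE\to 0$, which must be shown to force decomposability at rank $6$. Your proposal does not mention either.
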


Cases (1) and (3) are homogeneous and rigid, while cases (2) and (5) have interesting moduli. 
Indeed they are both closely related to plane quartics, and (4) has also
nice unexpected connections with instanton bundles of charge $3$ on $\PP^2$.  
Case (5) is the most surprising one, being built from a rank 4
Drézet bundle that gives rise to a matrix of linear forms of bounded but
not constant rank: a defect that the non-triviality of the extension 
turns out to correct. 

For all these cases we provide explicit matrices of linear forms of constant 
rank. In particular this will illustrate the nice fact that although we focus
on the abstract study of the associated vector bundles, the method is actually quite concrete.

\medskip\noindent {\it Acknowledgements}.  We thank  J.M. Landsberg for initiating this project, as well as for useful comments. We also thank D. Faenzi for his comments and hints. The second author has been partially supported by the grant PID2019-104844GB-I00, and the first one  by the ANR 
project FanoHK, grant ANR-20-CE40-0023.

\section{Preliminaries} 

\subsection{Generalities}
Giving a matrix of linear forms in $n+1$ variables, of size $a\times b$ and constant rank $e$, 
up to linear combinations of rows and columns, is equivalent to providing the following data:
\begin{itemize}
\item a vector bundle $\cE$ of rank $e$ on $\PP^n$, such that $\cE$ and $\cE^\vee(1)$
are both generated by global sections;
\item a subspace $A$ of $H^0(\PP^n,\cE)$, of dimension $a$, generating $\cE$ at every point;
\item a subspace $B$ of $H^0(\PP^n,\cE^\vee(1))$, of dimension $b$, generating $\cE^\vee(1)$ 
at every point.
\end{itemize}
The most important datum is obviously the vector bundle $\cE$. Once it is given, one can 
wonder how many sections are necessary to generate it at every point. By general principles, 
$e+n$ general sections will always suffice. A more precise answer is provided by Segre classes. Indeed, given a subspace $A$ of $H^0(\PP^n,\cE)$ of dimension $a$, the evaluation morphism for sections of $\cE$ restricts to a morphism
$$ev_A: A\otimes \cO_{\PP^n}\longrightarrow \cE.$$
Suppose $a\ge e$. If $ev_A$ is surjective, the kernel $\cK$ is a vector bundle of rank $a-e$, hence $c_{a-e+1}(\cK)=0$ and therefore $s_{a-e+1}(\cE)=0$. In general, the locus $D_A$ where $ev_A$ fails to be surjective has expected codimension 
$a-e+1$, and this expected codimension is the correct one for $A$ generic. In particular, 
$ev_A$ is everywhere surjective if $A$ is generic of dimension $a\ge e+n$. When $a<e+n$
and $D_A$ has the expected codimension, the Thom-Porteous formula asserts that the degree 
of $D_A$ is the degree of the Segre class $s_{a-e+1}(\cE)$. This applies in particular when $D_A$ 
is empty, and we get a contradiction if $s_{a-e+1}(\cE)\ne 0$. We conclude that:
\begin{itemize}
\item the minimal number of sections needed to generate $\cE$ at all points is 
$$a_{min}(\cE):=e-1+\min \{ k\ge 0 \text{ such that } s_k(\cE)=0\};$$
\item any general subspace $A$ of $H^0(\PP^n,\cE)$, of dimension $a\ge a_{min}(\cE)$, generates 
$\cE$ at every point.
\end{itemize}
Similar considerations apply to $\cE^\vee(1)$. This means that we can, in a sense, focus on the 
case where $A=H^0(\PP^n,\cE)$ and $B=H^0(\PP^n,\cE^\vee(1))$. The corresponding matrix of linear forms can 
be described invariantly as given by the natural pairing defined by the composition
$$m_\cE : H^0(\PP^n,\cE)\otimes H^0(\PP^n,\cE^\vee(1))\ra 
H^0(\PP^n,\cE\otimes \cE^\vee(1))\ra 
H^0(\PP^n,\cO(1)).$$
If we choose basis $s_1,\ldots , s_N$ of $H^0(\PP^n,\cE)$ and $\sigma_1,\ldots, \sigma_M$
of $H^0(\PP^n,\cE^\vee(1))$, we get the matrix $M_\cE$ whose entry $(i,j)$  is the linear form 
$m_\cE(s_i,\sigma_j)$. This matrix has constant rank $e$. Moreover, it remains of constant 
rank $e$ when we take any $a\times b$ submatrix given by $a$ general linear combinations 
of rows and $b$ general linear combinations of columns, as long as $a\ge a_{min}(\cE)$ 
and $b\ge b_{min}(\cE):=a_{min}(\cE^\vee(1))$. For special linear combinations of rows and columns, or for $a< a_{min}(\cE)$ or  $b< b_{min}(\cE)$, the rank will no longer be constant 
but only bounded by $e$. 

Notice the complete symmetry between $\cF=\cE^\vee(1)$ and $\cE=\cF^\vee(1)$. Exchanging these two bundles, once basis are chosen for their spaces of global sections,  simply amounts to taking the transpose of the associated
matrix of linear forms. Since $c_1(\cE^\vee(1))=e-c_1(\cE)$, we can in particular
always suppose that $2c_1(\cE)\le e$, which will simplify the classifications.

\medskip\noindent {\it Remark.} Eisenbud and Harris proposed in \cite{MR954659} a specific
terminology for linear spaces of matrices of bounded (i.e. everywhere non-maximal) rank,
by distinguishing {\it decomposable and strongly indecomposable, primitive, unextendable, unliftable} and finally {\it basic} spaces. For the constant rank case that we discuss in this paper, we focus on the indecomposability of the associated bundle. It would 
certainly be interesting to understand how our results should be interpreted in the 
wider setting of spaces of matrices of bounded rank.

\subsection{Uniformity}

We have already mentioned the following important fact. We include a proof for the 
reader's convenience. Recall that a vector bundle $\cE$ 
is {\it uniform} if for every line $L\subset \PP^n$, the restriction
$$\cE_L\simeq \cO_L(m_1)\oplus \cdots\oplus\cO_L(m_e)$$
for some integers $m_1,\ldots, m_e$ that do not depend on $L$. We say $\cE$ is $1$-uniform 
if $0\le m_1,\ldots , m_e\le 1$. In this case the number of integers $k$ such that $m_k=1$ 
is equal to the first Chern class $c_1(\cE)$. 

\begin{prop}
    Suppose $\cE$ is a rank $e$ vector bundle on $\PP^n$, such that $\cE$ and $\cE^\vee(1)$
    are both generated by global sections. Then
    \begin{enumerate}
        \item $\cE$ is $1$-uniform;
        \item if $e\le n+1$ and $\cE$ is indecomposable, then $\cE=\cO_{\PP^n}, \cO_{\PP^n}(1)$,
        $Q$ or  $Q^\vee(1)$. 
    \end{enumerate}
\end{prop}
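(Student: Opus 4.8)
The plan is to treat the two parts separately: derive uniformity from global generation by restricting to lines, and then read off the classification from the known structure of uniform bundles of small rank.

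For part (1), I would fix an arbitrary line $L\subset\PP^n$ and use Grothendieck's theorem to write $\cE_L\simeq\bigoplus_{i=1}^e\cO_L(m_i)$. Restriction preserves global generation, so the hypothesis that $\cE$ is globally generated forces each $m_i\ge 0$; applying the same observation to $\cE^\vee(1)$, whose restriction is $\bigoplus_i\cO_L(1-m_i)$, forces $1-m_i\ge 0$, i.e. $m_i\le 1$. Hence every $m_i\in\{0,1\}$. The number of indices with $m_i=1$ equals $\deg\cE_L=\sum_i m_i=c_1(\cE)\cdot[L]$, which is independent of $L$ because all lines are homologous in $\PP^n$; so the splitting type is constant and $\cE$ is $1$-uniform with exactly $c_1(\cE)$ summands $\cO_L(1)$.

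For part (2), set $c=c_1(\cE)$, so $\cE_L\simeq\cO_L(1)^{\oplus c}\oplus\cO_L^{\oplus(e-c)}$ with $0\le c\le e$. I would first clear the extreme cases. If $c=0$ then $\cE$ is trivial on every line, hence $\cE\simeq\cO_{\PP^n}^{\oplus e}$ (a bundle with vanishing splitting type on all lines is trivial), and indecomposability gives $\cE=\cO_{\PP^n}$. Using the symmetry $\cE\leftrightarrow\cE^\vee(1)$, which exchanges $c$ with $e-c$, the case $c=e$ yields $\cE=\cO_{\PP^n}(1)$. So I may assume $0<c<e$, and in particular $e\ge 2$. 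Now I would invoke the classification of uniform bundles of rank $\le n+1$ recalled above: for $n\ge 3$ such a bundle is a direct sum of line bundles and twists of $Q$ and $Q^\vee$, so indecomposability makes $\cE$ a single such summand. Filtering by $1$-uniformity pins it down: a line bundle $\cO_{\PP^n}(a)$ is $1$-uniform only for $a\in\{0,1\}$ (already excluded); $Q(b)_L\simeq\cO_L(b+1)\oplus\cO_L(b)^{\oplus(n-1)}$ is $1$-uniform only for $b=0$, giving $\cE=Q$; and $Q^\vee(b)_L\simeq\cO_L(b-1)\oplus\cO_L(b)^{\oplus(n-1)}$ is $1$-uniform only for $b=1$, giving $\cE=Q^\vee(1)$.

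The delicate point, which I expect to be the main obstacle, is that this clean description of uniform bundles fails on $\PP^2$, so the cases $n=1,2$ must be handled by hand. For $n=1$ every bundle splits into line bundles, and the only $1$-uniform indecomposables are $\cO_{\PP^1}$ and $\cO_{\PP^1}(1)$ (where $Q=\cO_{\PP^1}(1)$ and $Q^\vee(1)=\cO_{\PP^1}$). For $n=2$, with $e\le 3$, I would appeal to the classical low-rank classifications: Van de Ven's theorem shows the indecomposable rank-two uniform bundles are twists of $Q$, of which only $Q=Q^\vee(1)$ (rank-two bundles being self-dual up to twist) is $1$-uniform with $0<c<e$; and Elencwajg's theorem shows the indecomposable rank-three uniform bundles on $\PP^2$ are twists of $\Sym^2 Q$ and $\Sym^2 Q^\vee$, whose restriction to a line has summands of degrees differing by $2$ and so can never be $1$-uniform. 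Hence no bundle beyond $\cO_{\PP^n},\cO_{\PP^n}(1),Q,Q^\vee(1)$ occurs, completing the proof.
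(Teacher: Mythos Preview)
Your proof is correct and follows the same strategy as the paper: restrict to lines to show each $m_i\in\{0,1\}$ with fixed sum $c_1(\cE)$, then invoke the classification of uniform bundles of rank $\le n+1$ and filter by $1$-uniformity. The paper's proof is terser---it cites \cite{VV, EHS, PhE} and asserts directly that uniform bundles of rank $\le n+1$ are sums of line bundles and twists of $Q$, $Q^\vee$---whereas you take extra care to treat $n=1,2$ by hand (in particular ruling out twists of $\Sym^2 Q$ on $\PP^2$ via their splitting spread), which is arguably more precise since Elencwajg's rank-three classification on $\PP^2$ does include that extra case.
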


\proof (1) Being globally generated is preserved by restriction to a subvariety. So if $L$ is 
a line and $$\cE_L\simeq \cO_L(m_1(L))\oplus \cdots\oplus\cO_L(m_e(L)),$$ then $m_k(L)\ge 0$
for any $k$. Similarly, if $\cE^\vee(1)$ is also generated by global sections, we get that 
$m_k(L)\le 1$ for any $k$. Since the sum $m_1(L)+\cdots +m_e(L)=c_1(\cE)$ is fixed, the first 
assertion follows.  

(2) Uniform bundles of small rank have been classified \cite{VV, EHS, PhE}. A uniform vector bundle 
$\cE$ of rank $e\le n+1$ is a direct sum of line bundles and twists of the 
quotient bundle $Q$ and its dual.
If it is moreover $1$-uniform, the only possible simple factors are 
$\cO_{\PP^n}, \cO_{\PP^n}(1)$, $Q$ and $Q^\vee(1)$.\qed  

\begin{example}
%\medskip\noindent {\it Example 1}.
When $\cE=Q$, let $V=H^0(\PP^n,\cO_{\PP^n}(1))^\vee$. Then 
$$H^0(\cE)=V, \qquad H^0(\cE^\vee(1))=\wedge^2V^\vee, $$ and $m_\cE$ is the natural pairing 
$$V\otimes \wedge^2V^\vee\ra V^\vee, \quad v\otimes \omega\mapsto \omega(v,\bullet).$$
This classical example was vastly generalized in \cite{landsberg-manivel-bounded}. 
\end{example} 

\subsection{Factoring out sections}
When $\cE$ is globally generated of rank $e>n$, a general section $s\in H^0(\PP^n,\cE)$ does not vanish anywhere
and therefore defines a vector bundle $\cF$ of rank $f=e-1$, with an exact sequence 
$$ 0\ra \cO_{\PP^n}\stackrel{s}{\rightarrow} \cE\ra \cF\ra 0.$$
In particular, $\cF$ is globally generated and  $H^0(\PP^n,\cF)=H^0(\PP^n,\cE)/\CC s$. When 
$\cE^\vee(1)$ is generated by global sections, can it also be the case of $\cF^\vee(1)$? 
We have a short exact sequence 
$$ 0\ra \cF^\vee(1)\ra \cE^\vee(1)\ra \cO_{\PP^n}(1)\ra 0,$$
showing that $H^0(\PP^n,\cF^\vee(1))\subset H^0(\PP^n,\cE^\vee(1))$ is the kernel of the morphism 
$$ m_\cE(s,\bullet) : H^0(\PP^n,\cE^\vee(1))\lra H^0(\PP^n,\cO_{\PP^n}(1))=V^\vee.$$
Suppose that the  morphism $m_\cE(s,\bullet)$ is surjective. In order to check whether $\cF^\vee(1)$ is generated by global sections, we can use the following diagram, where the horizontal and vertical short sequences are exact:

$$
\begin{CD}
    @. \cK_{\cE^\vee(1)} @>>> Q^\vee  \\
   @. @VVV        @VVV\\
  H^0(\cF^\vee(1))\otimes\cO_{\PP^n} @>>>  H^0(\cE^\vee(1))\otimes\cO_{\PP^n} @>>> V^\vee\otimes\cO_{\PP^n} \\
   @VVV        @VVV @VVV \\
  \cF^\vee(1) @>>> \cE^\vee(1)  @>>> \cO_{\PP^n}(1)
\end{CD}
$$

\medskip
We denoted by $\cK_{\cE^\vee(1)}$ the kernel of the evaluation morphism for $\cE^\vee(1)$. 
By the snake lemma, we deduce: 

\begin{prop}\label{factoring}
The evaluation morphism 
$$H^0(\cF^\vee(1))\otimes\cO_{\PP^n}
\ra \cF^\vee(1)$$ for $\cF^\vee(1)$
is surjective at $[v]$ if and only if the natural map 
$$V/\CC v \longrightarrow \cK_{\cE^\vee(1),v}^\vee, \qquad \overline{w}\mapsto (\sigma \mapsto m_\cE(s,\sigma)(w))$$
is injective. 
\end{prop}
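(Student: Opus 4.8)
The plan is to apply the snake lemma to the displayed commutative diagram and then reduce the resulting statement to an elementary fact about transpose maps. First I would check that the two rows are genuinely short exact. The bottom row is the $\cO(1)$-twist of the dual of the defining sequence of $\cF$. For the middle row, the hypothesis that $m_\cE(s,\bullet)$ is surjective gives the exact sequence of vector spaces $0\to H^0(\cF^\vee(1))\to H^0(\cE^\vee(1))\to V^\vee\to 0$, and tensoring with the flat sheaf $\cO_{\PP^n}$ preserves exactness. The three vertical maps are the evaluation morphisms: the right-hand one $V^\vee\otimes\cO\to\cO(1)$ is the tautological surjection, whose kernel is $Q^\vee$ by the dual Euler sequence, and the middle one is surjective because $\cE^\vee(1)$ is globally generated. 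Recording the kernels as $\cK_{\cE^\vee(1)}$ and $Q^\vee$, and noting that both the middle and right cokernels vanish, the snake lemma produces
$$0\to \cK_{\cF^\vee(1)}\to \cK_{\cE^\vee(1)}\xrightarrow{\;\alpha\;} Q^\vee\to \cC\to 0,$$
where $\cC$ denotes the cokernel of the evaluation morphism for $\cF^\vee(1)$ and $\alpha$ is the top row of the diagram. Thus $\cF^\vee(1)$ fails to be globally generated exactly along the support of $\coker\alpha$.

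Next I would pass to fibers. Since forming cokernels is right exact, $\cC\otimes k([v])=\coker(\alpha_{[v]})$, so the evaluation morphism for $\cF^\vee(1)$ is surjective at $[v]$ if and only if $\alpha_{[v]}\colon \cK_{\cE^\vee(1),v}\to Q^\vee_{[v]}$ is surjective. I would then identify $\alpha_{[v]}$ concretely: the fiber $Q^\vee_{[v]}$ is the annihilator $v^\perp=(V/\CC v)^\vee$, and $\alpha_{[v]}$ sends a section $\sigma\in \cK_{\cE^\vee(1),v}$ (one vanishing at $[v]$) to $m_\cE(s,\sigma)\in V^\vee$, which indeed lies in $v^\perp$ since $\sigma_{[v]}=0$ forces $m_\cE(s,\sigma)(v)=0$. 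Hence $\alpha_{[v]}$ is precisely the map induced by the bilinear pairing $(\sigma,\overline w)\mapsto m_\cE(s,\sigma)(w)$ on $\cK_{\cE^\vee(1),v}\times (V/\CC v)$.

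Finally I would invoke the standard fact that, for a bilinear pairing $K\times W\to\CC$, the induced map $K\to W^\vee$ is surjective if and only if its transpose $W\to K^\vee$ is injective, both conditions expressing that the right radical is trivial. Applying this with $K=\cK_{\cE^\vee(1),v}$ and $W=V/\CC v$ identifies $\alpha_{[v]}$ with the map $K\to W^\vee$ and its transpose with the stated map $V/\CC v\to \cK_{\cE^\vee(1),v}^\vee$, which yields the claimed equivalence. The steps needing genuine care are verifying that the diagram commutes with the asserted identifications — in particular that the top row really is $\sigma\mapsto m_\cE(s,\sigma)$ and lands in $Q^\vee$ — and the right-exactness computation identifying the fiber of the cokernel with the cokernel of the fiber map; the final transpose duality is routine and is where the statement's asymmetry (surjectivity versus injectivity) comes from.
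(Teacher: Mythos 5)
Your argument is correct and is exactly the paper's proof: the paper presents the same commutative diagram and simply says ``by the snake lemma, we deduce'' the statement, leaving implicit precisely the steps you spell out (exactness of the rows, identification of $\alpha_{[v]}$ with the pairing $(\sigma,\overline w)\mapsto m_\cE(s,\sigma)(w)$, right-exactness of passing to fibers, and the surjective/injective transpose duality). Nothing to change.
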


In concrete terms, this means that we can choose a basis $s_1,\ldots , s_N$
of $H^0(\cE)$ with $s_1=s$, and a basis $\sigma_1, \ldots ,\sigma_M$ of $H^0(\cE^\vee(1))$ 
such that $m_\cE(s,\sigma_1), \ldots , m_\cE(s,\sigma_{n+1})$ form a basis of $H^0(\cO_{\PP^n}(1))$,
say $x_0,\ldots, x_n$, while $m_\cE(s,\sigma_k)=0$ for $k>n+1$, meaning that $\sigma_{n+2}, \ldots ,\sigma_M$ form a basis of  $H^0(\cF^\vee(1))$. The corresponding matrix $M_\cE$ 
is then of the form 

$$
M_\cE \;\;= \;\;\begin{pmatrix} x_0 & *\;\;&*&*&*\;\;& * \\x_1 & *\;\;&*&*&*\;\;&* \\
\cdots & *\;\;&*&*&*\;\;&* \\ \cdots & *\;\;&*&*&*\;\;&* \\x_n & *\;\;&*&*&*\;\;&* \\
0 & & & & & \\ 
\cdots & & & & & \\ 
\cdots & & & M_\cF  & & \\ 
\cdots & & & & & \\ 
0 & & & & & \end{pmatrix}
$$

\noindent where the matrix $M_\cF$ has constant rank $f=e-1$. We can apply 
this process several times if we want to factor out several sections of $\cE$, and the 
innocent looking criterion given by the previous proposition is easy to apply concretely. 

\medskip Conversely, the short exact sequence  $0\ra \cF^\vee(1)\ra \cE^\vee(1) \ra 
\cO_{\PP^n}(1)\ra 0$ indicates that we can expect that $\cE^\vee(1)$ is globally 
generated when $\cF^\vee(1)$ is. In other words, if $\cE^\vee(1)$ fails to be generated,
we cannot expect to correct this failure by extracting sections. It is not true
in general that extensions of globally generated vector bundles must be globally generated; 
but this can be checked to be true in some concrete situations, and the previous 
principle will apply. A typical case will be that of ordinary Drézet bundles. 

\smallskip In the next two sections we discuss two different types of vector
bundles, which have been studied a lot in the literature, but that we consider
under our specific point of view: Steiner bundles first, and then those we call
Drézet bundles, which are special instances of (duals of) the more general syzygy bundles. 

\section{Steiner bundles} 

Steiner bundles were defined in \cite{MR1240599} as vector bundles $\cE$ on
$\pp n$ arising from exact sequences
of the form 
$$
0\rightarrow \mathcal{O}_{\PP^n}(-1)^{\op c}\rightarrow \mathcal{O}_{\PP ^n}^{\oplus e+c} 
\rightarrow \cE\rightarrow 0.
$$

\subsection{First properties} 
It may be more natural to rewrite the previous exact sequence as 
$$
0\rightarrow \mathcal{O}_{\PP V}(-1)\otimes B_c\stackrel{M}{\rightarrow} \mathcal{O}_{\PP V}
\otimes W_{e+c} 
\rightarrow \cE\rightarrow 0,
$$
for a matrix $M$ defined by a tensor in $V^\vee\ot B_c^\vee\ot W_{e+c}$. 

\smallskip
Using the Beilinson spectral sequence, the property of being Steiner was characterized in \cite{MR1240599} by certain cohomological conditions; as a consequence, this is an open property. It was proved by Bohnhorst and Spindler (cited erroneously in \cite{MR1240599}) that any rank $n$ Steiner bundle on $\PP^n$ is stable (note that $n$ is the smallest possible rank of a Steiner bundle, since all its Chern classes are nonzero). 
This was extended in \cite[Theorem 1.7]{chs} to Steiner bundles such that   
 $n\le e<(n-1)c$ (actually under a slightly less restrictive condition).

\begin{remark} From the defining exact sequence of a Steiner bundle, we 
immediately get that 
its  Chern classes are  
$c_k=\binom{c+k-1}{k}$,  independent  of the rank. From these relations it is 
in general easy to check that a given vector bundle is not Steiner. 
\end{remark}
  
\medskip
Those Steiner bundles we are interested in are necessarily $1$-uniform, and as such have 
been studied in \cite{miroroig-marchesi}, where the following statement is proved:

\begin{prop}\label{miroroigmarchesi}
If a Steiner bundle $\cE$ is $1$-uniform and has no rank one summand, then 
$c+2n-2\le e\le cn$. All these values are possible, but for $e=cn$ the only possibility is 
$\cE=Q^{\oplus c}$.
\end{prop}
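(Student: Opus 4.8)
The plan is to reduce the statement to a question of linear algebra about the defining tensor $M=\mu\in V^\vee\otimes B_c^\vee\otimes W_{e+c}=\Hom(V\otimes B_c,W_{e+c})$, read off the two bounds from it, and only then discuss sharpness. First I would unwind $1$-uniformity line by line. Fix a $2$-plane $U\subset V$ and restrict the defining sequence to $L=\PP(U)\cong\PP^1$: the map $\cO_L(-1)\otimes B_c\to\cO_L\otimes W_{e+c}$ is a pencil of $(e+c)\times c$ matrices. Local freeness of $\cE$ forces this pencil to have full column rank at every point of $L$, so by Kronecker's normal form it is equivalent to a direct sum $\bigoplus_j L_{\eta_j}^{\top}$ of ``column'' blocks $L_\eta^{\top}$ of size $(\eta+1)\times\eta$, no $L_\epsilon$-blocks and no Jordan blocks being possible (those drop rank somewhere). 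Hence $\cE_L\simeq\bigoplus_j\cO_L(\eta_j)$ with $\sum_j\eta_j=c$ and exactly $e$ blocks; and $L_\eta^{\top}$, read as the restriction $\mu_U:U\otimes B_c\to W_{e+c}$, is injective exactly when $\eta\le 1$. This gives the key reformulation
$$\cE\ \text{is }1\text{-uniform}\iff \mu_U:U\otimes B_c\to W_{e+c}\ \text{is injective for every }2\text{-plane }U\subset V,$$
equivalently, \emph{every nonzero element of $\ker\mu\subset V\otimes B_c$ has rank $\ge 3$ when viewed as a homomorphism $B_c^\vee\to V$}.

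Next, the upper bound and the equality case. An $\cO_{\PP^n}(1)$-summand is automatically excluded, since twisting the defining sequence by $\cO(-1)$ gives $H^0(\cE(-1))=0$; so ``no rank one summand'' only constrains $\cO_{\PP^n}$-summands. If $\mu$ is not surjective, pick $0\ne\phi\in(\operatorname{im}\mu)^\perp\subset W_{e+c}^\vee$: then $\phi$ kills the image of $M$, hence descends to a retraction $\cE\to\cO_{\PP^n}$ and splits off a trivial summand; conversely such a summand produces such a $\phi$. Therefore no rank one summand forces $\mu$ surjective, whence $e+c=\dim W_{e+c}\le\dim(V\otimes B_c)=(n+1)c$, that is $e\le cn$. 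At equality $\mu$ is an isomorphism, so $M_v(b)=v\otimes b$ under $W_{e+c}\cong V\otimes B_c$; the defining map is then the Euler inclusion $\cO(-1)\hookrightarrow V\otimes\cO$ tensored with $B_c$, giving $\cE\cong Q\otimes B_c=Q^{\oplus c}$.

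For the lower bound, surjectivity yields $\dim\ker\mu=(n+1)c-(e+c)=nc-e$, while the reformulation says that $\PP(\ker\mu)$ is disjoint from the determinantal variety $Z\subset\PP(V\otimes B_c)$ of homomorphisms of rank $\le 2$, which is irreducible of codimension $(n-1)(c-2)$. Since $\PP(\ker\mu)$ and $Z$ are disjoint, the projective dimension theorem gives $\dim\PP(\ker\mu)+\dim Z<\dim\PP(V\otimes B_c)$, i.e. $\dim\ker\mu\le(n-1)(c-2)$. Substituting $\dim\ker\mu=nc-e$ produces $nc-e\le(n-1)(c-2)$, that is $e\ge c+2n-2$. (For $c\le 2$ the locus $Z$ fills the whole space, so the condition forces $\ker\mu=0$ and one lands directly in the equality case $e=cn$.)

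Finally, for ``all these values are possible'' I would run the reformulation backwards: any subspace $K\subset V\otimes B_c$ all of whose nonzero elements have rank $\ge 3$ defines, via $W_{e+c}=(V\otimes B_c)/K$ and $M_v(b)=[v\otimes b]$, a $1$-uniform Steiner bundle with no rank one summand and $e=nc-\dim K$. Since any subspace of such a $K$ again has all ranks $\ge 3$, every intermediate value of $e$ follows once a single $K$ of the maximal dimension $(n-1)(c-2)$ has been exhibited. Constructing such an extremal rank-$\ge 3$ subspace—a space of matrices of the maximal possible dimension avoiding the rank $\le 2$ locus—is the step I expect to be the main obstacle: the projective dimension theorem only shows this dimension is an upper bound, and producing subspaces that actually attain it, together with checking that the resulting $\cE$ carries no unexpected summand, is where the genuine work lies.
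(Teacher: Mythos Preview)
The paper does not prove this proposition; it is quoted from \cite{miroroig-marchesi}, so there is no in-paper argument to compare against. Your proof is correct and is in substance the same as the one in that reference: reformulate $1$-uniformity as the condition that $\ker\mu\subset V\otimes B_c$ contains no nonzero tensor of rank $\le 2$, read off the upper bound (and the $e=cn$ case) from surjectivity of $\mu$, and get the lower bound from the projective dimension theorem applied to the rank~$\le 2$ determinantal locus.

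Two remarks. First, you are underselling the final step. Once you set $W_{e+c}=(V\otimes B_c)/K$, the map $\mu$ is the quotient map and hence surjective by construction, so no trivial summand can appear; and $H^0(\cE(-1))=0$ already rules out an $\cO(1)$ summand. A \emph{general} linear subspace $K$ of dimension $(n-1)(c-2)$ misses the rank~$\le 2$ locus by the obvious incidence count (this is the easy direction, not a delicate construction), and since it then also misses the rank~$\le 1$ locus the resulting $\cE$ is automatically locally free. Subspaces of such a $K$ give all intermediate values of $e$. So ``all values are possible'' drops out of your own framework with no further work; there is no genuine obstacle here. Second, your parenthetical for $c\le 2$ is fine when $c=2$ (both bounds collapse to $e=2n$), but for $c=1$ the stated lower bound $2n-1$ exceeds the upper bound $n$; the proposition is to be read with $c\ge 2$, which is implicit in the context.
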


It is therefore quite natural to construct Steiner bundles of rank smaller than $cn$   by factoring out sections
from $Q^{\oplus c}$. The commutative diagram (where we omit zeroes)

$$\begin{CD}
    @. \cO_{\PP^n}(-1)\otimes B_{c} @=  \cO_{\PP^n}(-1)\otimes B_c \\
   @.  @VVV        @VVV\\
\cO_{\PP^n}\otimes A_k   @>>> \cO_{\PP^n}\otimes V\otimes B_c @>>>
\cO_{\PP^n}\otimes W_{e+c} \\
   @|  @VVV        @VVV\\
\cO_{\PP^n}\otimes A_k   @>>> Q \otimes B_c @>>> \cE 
\end{CD}$$

\medskip\noindent exchanges the subspace $A_k$ of 
$V\otimes B_c$ with subspace $B_c$ of $V^\vee\otimes W_{e+c}$, where $e=cn-k$. This is an instance of the classical {\it castling transform}. 

\subsection{Global generation}
From our perspective, the main problem is to understand when 
$\cE^\vee(1)$ is globally generated. 
For the general Steiner bundle, an essentially complete answer to this question 
was obtained  in \cite{MR1174900}.
%We have a  commutative diagram
%, where the bottom line is \eqref{zz1} and it induces the other horizontal maps:

%$$\begin{CD}
%    @. W_{e+c}^{\vee}\otimes Q^\vee @>g>> B_c^\vee\otimes Q^\vee V^\vee \\
%   @. @VVV        @VVV\\
%  H^0(\cE^\vee(1)) \otimes\cO_{\PP^n} @>>> W_{e+c}^{\vee}\otimes 
%  V^\vee\otimes\cO_{\PP^n} @>>> B_c^\vee\otimes S^2V^\vee\otimes \cO_{\PP^n}  \\
%   @VVfV        @VVV @VVV\\
%  \cE^\vee(1) @>>>  W_{e+c}^{\vee}\otimes\cO_{\PP^n}(1)  @>>> B_c^\vee\otimes\cO_{\PP^n}(2) 
%\end{CD}$$

%\medskip\noindent 
%By the snake lemma, $f$ is surjective exactly when $g$ is surjective.
%The difference between  the 
%ranks of the bundles at the source and at the target of $g$ is $n(a+r)-a\frac{n(n+3)}{2}$. So, we expect 
%this morphism to be everywhere surjective as soon as this difference is at least $n$. This

\begin{prop} \label{prop31}
Suppose that $e\ge \frac{n+1}{2}c+1$. If $c\le 3$ suppose moreover that $e\ge n$ for $c=1$, 
$e\ge 2n$ for $c=2$, $e\ge 2n+1$ for $c=3$ (and even $e\ge 6$ for $c=3$ 
and $n=2$). 

Then for the generic Steiner bundle $\cE$ of rank $e$ on $\PP^n$ with $c=c_1(\cE)$,
the twisted dual $\cE^\vee(1)$ is generated by global sections. 
%The image of the morphism 
%$$ H^0(\PP^n,\cO(1))^\vee \longrightarrow H^0(\PP^n,\cE)^\vee \otimes H^0(\PP^n,\cE^\vee(1))^\vee$$
%is an $(n+1)$-dimensional space of $(r+a)\times \frac 12(n+1)(2r-an)$ matrices of constant rank $r$. 
\end{prop}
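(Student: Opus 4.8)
The plan is to combine the openness of the relevant properties with a single dimension count that produces the stated bound exactly. Both "being Steiner'' and "having globally generated twisted dual'' are open conditions on the space of tensors $M\in V^\vee\ot B_c^\vee\ot W_{e+c}$ defining the bundle, so it suffices to show that the \emph{generic} such $\cE$ has $\cE^\vee(1)$ globally generated, and for this it is enough to exhibit one good $\cE$ for each admissible $(n,e,c)$. As an anchor I would keep in mind the top case $e=cn$, where Proposition~\ref{miroroigmarchesi} forces $\cE=Q^{\oplus c}$ and $\cE^\vee(1)=\Omega^1_{\PP^n}(2)^{\oplus c}$, which is globally generated by the (twisted, dualized) Euler sequence; the smaller ranks should then be reached either by factoring out sections via Proposition~\ref{factoring}, or directly by the Thom--Porteous estimate described below.

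The two numerical inputs are as follows. First, dualizing and twisting the defining sequence gives $0\ra\cE^\vee(1)\ra\cO_{\PP^n}(1)^{\oplus e+c}\ra\cO_{\PP^n}(2)^{\oplus c}\ra 0$, and for generic $M$ the induced map on global sections is of maximal rank, so that $h^0(\PP^n,\cE^\vee(1))=(n+1)(e+c)-\binom{n+2}{2}c$. Second, as explained in Subsection~1.1, the locus in $\PP^n$ where the full evaluation morphism $H^0(\PP^n,\cE^\vee(1))\otimes\cO_{\PP^n}\ra\cE^\vee(1)$ fails to be surjective has expected codimension $h^0(\PP^n,\cE^\vee(1))-e+1$ (here $\cE^\vee(1)$ has rank $e$). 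This locus is empty, and hence $\cE^\vee(1)$ is globally generated, as soon as the expected codimension exceeds $n$, i.e. as soon as $h^0(\PP^n,\cE^\vee(1))\ge e+n$.

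It now remains only to feed the value of $h^0$ into this inequality. A direct computation gives
$$h^0(\PP^n,\cE^\vee(1))-e-n=n\Big(e-1-\tfrac{n+1}{2}c\Big),$$
so that $h^0(\PP^n,\cE^\vee(1))\ge e+n$ is \emph{equivalent} to $e\ge\frac{n+1}{2}c+1$. This is exactly the hypothesis, and it is the source of the bound: the inequality in the statement is precisely the condition that the generic Steiner bundle carries enough sections for the evaluation degeneracy locus of $\cE^\vee(1)$ to be forced empty on dimensional grounds. The same threshold reappears from the factoring viewpoint: in Proposition~\ref{factoring} the comparison map $V/\CC v\ra\cK_{\cE^\vee(1),v}^\vee$ lands in a space of dimension $h^0(\PP^n,\cE^\vee(1))-e$, and its injectivity at every $[v]$ requires $n\le h^0(\PP^n,\cE^\vee(1))-e$, the very same inequality, which lets global generation of the dual propagate downward from $Q^{\oplus c}$ one section at a time.

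The hard part will be upgrading "empty for the expected reason'' to "actually empty for the generic Steiner bundle'': achieving the expected codimension of a degeneracy locus of the \emph{full} evaluation map (as opposed to a generic subspace of sections) is not automatic, and likewise the maximal rank of the section map computing $h^0$ must be justified rather than assumed. This genericity is precisely the content I would import from \cite{MR1174900}, which settles the global generation of $\cE^\vee(1)$ for the general Steiner bundle; the argument above is then the translation of that result into the present language together with the verification that its numerical range coincides with $e\ge\frac{n+1}{2}c+1$. Finally, the small-$c$ provisos must be handled by hand, because for $c\le 3$ the generic Steiner bundle can fail to have the expected $h^0$ or, as in Proposition~\ref{miroroigmarchesi}, splits off line-bundle summands: for $c=1$ one reduces to $Q$ and its trivial summands, while for $c=2,3$ one checks the finitely many boundary ranks directly, which is exactly why the extra explicit lower bounds on $e$ enter the statement.
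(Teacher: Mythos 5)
Your proposal is correct and follows essentially the same route as the paper, which in fact gives no argument beyond deferring the substantive genericity statement to \cite{MR1174900} and recording, via the exact sequence $0\ra H^0(\cE^\vee(1))\ra W_{e+c}^\vee\otimes V^\vee\ra B_c^\vee\otimes S^2V^\vee\ra 0$, the same value of $h^0(\cE^\vee(1))$ that drives your dimension count. You rightly flag that the Thom--Porteous estimate alone is only heuristic for the full evaluation map and that the actual content must be imported from \cite{MR1174900}; your added verification that $h^0(\cE^\vee(1))\ge e+n$ is equivalent to $e\ge\frac{n+1}{2}c+1$ is a useful explicitation of where the bound comes from.
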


Moreover $\cE^\vee(1)$ is acyclic and its sections are given by 
either one of the exact sequences 
$$0\ra H^0(\cE^\vee(1))\ra W_{e+c}^\vee\otimes V^\vee\ra B_c^\vee\otimes S^2V^\vee\ra 0,$$
$$0\ra H^0(\cE^\vee(1))\ra B_{c}^\vee\otimes \wedge^2V^\vee\ra A_k^\vee\otimes V^\vee\ra 0.$$

\begin{remark} A very similar situation, but with vector bundles $\cE^\vee(1)$ which do not have enough sections to be generated, was discussed in \cite{MR1717579} and   provides   examples of spaces of  matrices of bounded   rank.  
 \end{remark}

\begin{example}\label{steiner-drezet}
Suppose that $n=2$, $e=5$, $c=3$. By castling transform, 
$\cE$ is defined by an exact sequence 
$$0\ra \cO_{\PP^2}  \ra  Q \otimes B_3 \ra \cE \ra 0$$
where the first morphism is given by a tensor in $V\otimes B_3$. In general 
this tensor has maximal rank and identifies $B_3$ with $V^\vee$, and 
then $H^0(\cE)$ with $\fsl(V)$. It follows that $\cE$ is in fact homogeneous, 
and that $H^0(\cE^\vee(1))$ can be identified with $S^2V\otimes \det(V)^\vee$. 
Moreover $\cE^\vee(1)$ is globally generated although it has only $6$ sections, and one can check that $s_2(\cE^\vee(1))=0$ (in fact $\cE^\vee(1)$ is a Steiner bundle). So we get a matrix of constant rank, which can be interpreted as the 
equivariant morphism $V\ra \thom(S_2V,S_{21}V)$. This is one of the toy examples
generalized in \cite{landsberg-manivel-bounded}.
 \end{example}
 
\medskip
For $c=1$, the condition $e\ge n$ is already necessary for the mere existence of a Steiner bundles, as we already mentioned. For $c=2$, the condition that $e\ge 2n$ is necessary, in 
view of the following (see also Proposition \ref{miroroigmarchesi}). 

%\medskip\noindent {\it Example 3.} 
\begin{example}\label{drops}
Consider the direct sum of two copies of the quotient 
bundle $Q$ and let us try to factor out a general section $s\in H^0(Q\oplus Q)=V\oplus V$. So 
$s=(s_1,s_2)$ for two independent vectors $s_1, s_2\in V$. Applying Proposition \ref{factoring}, 
we see that the corresponding bundle $\cE^\vee(1)$ is generated by global sections at $[v]$
if and only if the natural morphism 
$$ V/\CC v \ra \wedge^2(V/\CC v)\oplus \wedge^2(V/\CC v), \quad 
\overline{w}\mapsto 
(\overline{w}\wedge \overline{s_1}, \overline{w}\wedge \overline{s_2})$$
is injective. But this fails! Indeed it clearly fails when $v=s_1$ since $\overline{s_2}$ is sent  to zero. In fact it fails 
exactly on the projective line joining $[s_1]$ to $[s_2]$, and $\cE^\vee(1)$ fails to be globally
generated exactly on this line. 
\end{example}

\medskip For $c=3$ this  problem does not happen. 

%\medskip\noindent {\it Example 4.} 
\begin{example}\label{3Q-1}
Consider the direct sum of three copies of the quotient 
bundle $Q$, and let us try to factor out a general section $s\in H^0(Q\oplus Q\oplus Q)=
V\oplus V\oplus V$. So 
$s=(s_1,s_2,s_3)$ for three independent vectors $s_1, s_2, s_3\in V$.
Then the corresponding vector bundle $\cE^\vee(1)$ is generated by global sections at $[v]$
if and only if the natural morphism 
$$ V/\CC v \ra \wedge^2(V/\CC v)^{\oplus 3}, \quad \overline{w}\mapsto 
(\overline{w}\wedge \overline{s_1}, \overline{w}\wedge \overline{s_2}, \overline{w}\wedge \overline{s_3})$$
is injective. Observe that $\overline{w}\wedge \overline{s_1}$ is zero 
exactly when 
$w\in \langle v,s_1\rangle$. So if also $\overline{w}\wedge \overline{s_2}=0$ and 
$\overline{w}\wedge \overline{s_3}=0$, then 
$$w\in \langle v,s_1\rangle \cap \langle v,s_2\rangle\cap \langle v,s_3\rangle=\CC v$$
for any $v$ since  $s_1, s_2, s_3$ are independent. So in this case $\cE^\vee(1)$ is
generated by global sections, with
$$H^0(\cE^\vee(1))=\Big\{(\omega_1,\omega_2, \omega_3)\in\wedge^2V^\vee, \;\; 
\omega_1(s_1,\bullet)+\omega_2(s_2,\bullet)+\omega_3(s_3,\bullet)=0\Big\}.$$
\end{example}

\medskip A natural question is whether extracting sections from a decomposable vector bundle,
we can get an indecomposable vector  bundle. Consider once again an exact sequence  
$$0\ra A_k\otimes \cO_{\PP^n}\ra B_c\otimes Q\ra \cE\ra 0$$
defined by a tensor $M\in A_k^\vee\otimes B_c\otimes V$. Note that $\cE$ is a vector 
bundle if and only if the image of the induced map $A_k\ra B_c\otimes V$ does not 
contain any rank one tensor. Dualizing, we get $H^q(\cE^\vee)=\delta_{q,1}A_k^\vee$,
and after tensoring by $Q$ and taking cohomology, we obtain the exact sequence 
$$0\ra H^0(\cE^\vee\otimes Q)\ra B_c^\vee\ra A_k^\vee\otimes V \ra 
H^1(\cE^\vee\otimes Q)\ra 0.$$
If we suppose the morphism $B_c^\vee\ra A_k^\vee\otimes V$ to be injective,
we deduce the exact sequence
$$0\ra H^0(End(\cE))\ra End(A_k)\ra B_c\otimes (A_k^\vee\otimes V)/B_c^\vee \ra 
H^1(End(\cE))\ra 0.$$
Note that $B_c\otimes (A_k^\vee\otimes V)/B_c^\vee$ is the tangent space at 
$B_c^\vee$ to the Grassmannian $Gr(c, A_k^\vee\otimes V)$. We can see this 
Grassmannian as the quotient of $A_k^\vee\otimes B_c\otimes V$ by $GL(B_c)$. 
Hence the following statement:

\begin{prop} Let $G$ denote the image of the group $GL(A_k)\times GL(B_c)$ inside 
$GL(A_k^\vee\otimes B_c)$. Suppose that $M\in A_k^\vee\otimes B_c\otimes V$ has finite stabilizer in $G$, and that the induced maps $A_k\ra B_c\otimes V$ and $B_c^\vee\ra A_k^\vee\otimes V$ are both
injective.

Then $\cE$ is simple, and in particular indecomposable. \end{prop}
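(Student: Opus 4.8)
The exact sequence derived just above the statement already exhibits $H^0(\End(\cE))$ as the kernel of a linear map $d\colon \End(A_k)\to T_{B_c^\vee}Gr(c,A_k^\vee\otimes V)$, so the entire task is to compute this kernel and check that it is one-dimensional. The plan is to read $d$ as an infinitesimal action. Since $Gr(c,A_k^\vee\otimes V)$ is the quotient of the (open set of injective) tensors in $A_k^\vee\otimes B_c\otimes V$ by $GL(B_c)$, its tangent space at the point $[M]=B_c^\vee$ is the cokernel of the infinitesimal action $\mathfrak{gl}(B_c)\to A_k^\vee\otimes B_c\otimes V$, $\eta\mapsto\eta\cdot M$. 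First I would verify that $d$ is exactly the composite of the infinitesimal $\mathfrak{gl}(A_k)$-action $\xi\mapsto\xi\cdot M$ with the projection onto this cokernel.

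Granting this, $\ker d=\{\xi\in\mathfrak{gl}(A_k):\xi\cdot M\in\mathfrak{gl}(B_c)\cdot M\}$, which is precisely the image of the projection $\pi\colon\mathfrak{s}\to\mathfrak{gl}(A_k)$, where $\mathfrak{s}=\{(\xi,\eta):\xi\cdot M+\eta\cdot M=0\}$ is the stabilizer Lie algebra of $M$ inside $\mathfrak{gl}(A_k)\oplus\mathfrak{gl}(B_c)$. Next I would invoke the injectivity hypotheses. The assumption that $B_c^\vee\to A_k^\vee\otimes V$ is injective makes the $\mathfrak{gl}(B_c)$-action on $M$ infinitesimally free: viewing $M$ as that injective map, $\eta\cdot M=0$ forces $M\circ\eta^\vee=0$, hence $\eta=0$. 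Therefore $\ker\pi=\{(0,\eta):\eta\cdot M=0\}=0$, so $\pi$ is injective and $H^0(\End(\cE))\cong\mathfrak{s}$ as vector spaces.

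It then remains to compute $\dim\mathfrak{s}$. The kernel of $GL(A_k)\times GL(B_c)\to G$ is the one-parameter group of diagonal scalars $(\lambda\,\Id_{A_k},\lambda\,\Id_{B_c})$, whose generator $(\Id_{A_k},\Id_{B_c})$ lies in $\mathfrak{s}$ because it acts on $M\in A_k^\vee\otimes B_c\otimes V$ by $-1$ on the $A_k^\vee$ factor and $+1$ on the $B_c$ factor, hence by $0$; this line corresponds to the scalar endomorphisms of $\cE$. The finite-stabilizer hypothesis says $\operatorname{Lie}(\operatorname{Stab}_G(M))=\mathfrak{s}/\CC(\Id_{A_k},\Id_{B_c})=0$, whence $\mathfrak{s}=\CC(\Id_{A_k},\Id_{B_c})$ is one-dimensional. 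Consequently $H^0(\End(\cE))$ is one-dimensional, hence spanned by $\Id_\cE$, and $\cE$ is simple and in particular indecomposable.

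The hard part will be the first step, namely matching the cohomological connecting map $\End(A_k)\to T_{B_c^\vee}Gr(c,A_k^\vee\otimes V)$ coming from tensoring the defining sequence by $\cE^\vee$ with the geometric infinitesimal action on the Grassmannian; once this compatibility is in hand, the identification of $\ker d$ with the stabilizer algebra and the bookkeeping of the diagonal scalar line (which accounts for the expected one dimension) are routine linear algebra, with the two injectivity assumptions used respectively to guarantee that $\cE$ is a bundle and that $\pi$ is injective.
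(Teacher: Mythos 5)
Your proposal is correct and follows essentially the same route as the paper: the paper derives the four-term sequence $0\ra H^0(End(\cE))\ra End(A_k)\ra B_c\otimes (A_k^\vee\otimes V)/B_c^\vee\ra H^1(End(\cE))\ra 0$, identifies the middle term with the tangent space to $Gr(c,A_k^\vee\otimes V)$ viewed as a quotient by $GL(B_c)$, and concludes; your write-up simply makes explicit the steps the paper leaves implicit (the connecting map as the infinitesimal action, the kernel as the image of the stabilizer algebra, the role of the two injectivity hypotheses, and the diagonal scalar line accounting for the one remaining dimension).
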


A more precise simplicity criterion is given in \cite{MR2182426}. Under our conditions we can also conclude that 
the moduli space or stack parametrizing deformations of $\cE$ is made of 
vector bundles of the same type, and  is essentially the quotient of 
$A_k^\vee\otimes B_c\otimes V$ by $G\simeq GL(A_k)\times GL(B_c)/ \CC^*$. 
Up to the action of $GL(V)$  we get the quotient of  $A_k^\vee\otimes B_c\otimes V$ by $GL(A_k)\times GL(B_c)\times GL(V)$, which has been much studied. 

%We can of course compute $H^0(\cE^\vee(1))$ from the exact sequence 
%$$0\ra H^0(\cE^\vee(1))\ra B_\ell^\vee\otimes \wedge^2V^\vee \ra A_k^\vee\otimes V^\vee\ra 
%H^1(\cE^\vee(1))\ra 0.$$

\begin{remark}
Finiteness of the generic stabilizer is to be expected in general, and in fact 
it holds as soon as it is compatible with dimensions. A general classification of irreducible representations of semisimple complex Lie groups with
generic stabilizer of positive dimension was obtained in \cite{elashvili}.
\end{remark}

%This is also
%stated in https://arxiv.org/pdf/1708.01645.pdf, see 
%the Remark after Theorem 1.2.}

%\medskip\noindent {\it Example 5.}
\begin{example}
A well-known case is $k=2$, $c=n=3$, for which we get the variety of twisted
cubics in $\PP^3$, as a parameter space for a family of rank seven vector bundles. 
Moreover the action of $PGL_4$ on this quotient is quasi-homogeneous, with an open subset corresponding to the smooth twisted cubics; hence an essentially unique matrix of constant 
rank 7 that we are going to compute. The starting point is the presentation of the 
ideal sheaf of the twisted cubic, as being generated by the $2\times 2$ minors of% the matrix
$$\begin{pmatrix} x_1& x_2&x_3 \\ x_2&x_3&x_4\end{pmatrix}.$$
The corresponding tensor in $A_2^\vee\otimes B_3\otimes V$ is
$$M=\alpha_1\otimes (b_1\otimes c_1+b_2\otimes c_2+b_3\otimes c_3)+
\alpha_2\otimes (b_1\otimes c_2+b_2\otimes c_3+b_3\otimes c_4).
$$
This means that $H^0(\cE)$ can be identified with the quotient of $B_3\otimes V$ by the 
pencil generated by $b_1\otimes c_1+b_2\otimes c_2+b_3\otimes c_3$ and $
b_1\otimes c_2+b_2\otimes c_3+b_3\otimes c_4$. We choose for basis 
of this $10$-dimensional space the classes of $b_1\otimes c_3, b_1\otimes c_4, 
b_2\otimes c_1, \ldots , b_3\otimes c_4$. 

On the other hand, $H^0(\cE^\vee(1))$ is the kernel of the contraction by $M$ from 
$B_3^\vee\otimes \wedge^2V^\vee$ to $A_2^\vee\otimes V^\vee$. We check that this kernel 
has the expected dimension $10$, with a basis given by the tensors
$$\beta_1\otimes x_{34}, \beta_2\otimes x_{14}, \beta_3\otimes x_{12}, \beta_2\otimes x_{12}-\beta_3\otimes x_{13}, \beta_1\otimes x_{24}-\beta_2\otimes x_{34}, $$
$$\beta_1\otimes x_{13}-\beta_2\otimes x_{23}+\beta_3\otimes x_{24}, 
\beta_2\otimes x_{13}-\beta_3\otimes x_{14}, \beta_1\otimes x_{14}-\beta_2\otimes x_{24}, $$
$$\beta_1\otimes x_{12}-\beta_3\otimes (x_{14}- x_{23}),\; \beta_1\otimes (x_{14}- x_{23})
-\beta_3\otimes x_{34}.
$$
The resulting $10\times 10$ matrix of constant rank $7$ is the following:

$$\begin{pmatrix}
 x_4&-x_3&0&0&0&0&0&0&0&0 \\
 0&0&x_4&0&0&-x_1&0&0&0&0 \\
 0&0&0&0&0&0&x_2&-x_1&0&0 \\
 0&0&x_2&-x_1&0&0&-x_3&0&x_1&0 \\
 0&-x_2&0&0&x_4&-x_3&0&0&0&0 \\
 -x_1&0&0&x_3&-x_2&0&0&x_4&0&-x_2 \\
 0&0&x_3&0&-x_1&0&-x_4&0&0&x_1 \\
 0&-x_1&0&-x_4&0&x_2&0&0&0&0 \\
 0&0&0&0&0&0&-x_4&x_3&-x_2&x_1 \\
 x_2&-x_1&0&0&0&0&0&0&x_4&-x_3 
\end{pmatrix}.$$

\end{example}

\section{Drézet bundles}

In this section we discuss a class of vector bundles that were used by Drézet to construct 
examples of uniform bundles of small rank which are not homogeneous; namely, rank $2n$ non-homogeneous uniform vector bundles on $\PP^n$ for any $n\ge 2$ \cite{drezet-uniform}. 

\subsection{Definition and first examples} Our terminology is the following. 

\medskip \noindent {\it Definition.} A {\it Drézet bundle}  on $\PP^n$ is 
a vector bundle $\cE$ fitting into an exact sequence 
$$0\ra \cO_{\PP^n}(-c)\ra  \cO_{\PP^n}^{\oplus e+1}\ra\cE\ra 0.$$
Of course $c=c_1(\cE)$ and $s(\cE)=1-ch$. 

\medskip The dual vector bundle  $\cE^\vee$ is a 
special case  of what is called a {\it syzygy bundle}.
The stability of syzygy bundles was discussed in \cite{Marques-MiroRoig}. It follows from 
\cite[Theorem 4.6]{Marques-MiroRoig} that a general Drézet bundle is stable when $n\le e\le \binom{n+c}{c}$. 

%\medskip\noindent {\it Example 6.} 
\begin{example}\label{pfaffian}
Consider the space $\cA_m$ of skew-symmetric matrices of 
size $m$. If $m=2p+1$ is odd, the generic matrix in $\cA_m$ has rank $2p$, hence a one 
dimensional kernel given by its $p$-th power as a skew-symmetric two-form. Morever the closed 
subset of matrices of rank smaller than $2p$ has codimension three. 
Taking a general plane 
$P\simeq\PP^2$  inside $\PP(\cA_m)$, we thus get a Drézet bundle 
$$0\ra \cO_{P}(-p)\ra  \cO_{P}^{\oplus 2p+1}\ra\cE_P\ra 0.$$
Hence a family of vector bundles on $\PP^2$ parametrized by $Gr(3,\cA_m)$. In this case 
$\cE_P^\vee(1)\cong \cE_P$, in particular it is generated by sections \cite{MR0453723}. 
This self-duality 
property implies that  $\cE_P$ is  uniform of splitting type $(0^p,1^p)$. In the 
sequel we will call these bundles {\it Pfaffian}, since $P$ is generated by the 
Pfaffians of the  $2p\times 2p$
skew-symmetric minors of the skew-symmetric matrix $\cA_{2p+1}$. Note that Pfaffian bundles only exist on $\PP^2$.
\end{example}

\begin{example}\label{universalDrezet} 
The universal Drézet bundle on $\PP^n$ with first Chern class $c$ is the homogeneous bundle defined by the sequence 
$$0\ra \cO_{\PP^n}(-c)\ra  S^cV\otimes \cO_{\PP^n}\ra\cE_c\ra 0,$$
where $V=\CC^{n+1}$ and $\PP^n=\PP(V)$. This homogeneous bundle is discussed in 
\cite{landsberg-manivel-bounded}. Any Drézet bundle of higher rank with the 
same first Chern class is the direct sum of $\cE_c$ with a trivial factor. Any Drézet 
bundle of smaller rank is obtained by factoring out sections of $\cE_c$. 
\end{example}

\subsection{Deformations}
A Drézet bundle $\cE$ of rank $e$ with first Chern class $c$ is defined by a collection $P_0,\ldots, P_e$ of degree $c$ polynomials, or rather by the subspace $P=\langle P_0,
\ldots , P_e\rangle$ they generate in $S^cV^\vee$. We will always suppose $P$ to be 
of dimension $e+1$, since otherwise $\cE$ has a trivial factor. We thus consider the 
family of Drézet bundles as parametrized by the open subset of $Gr(e+1,S^cV^\vee)$
parametrizing spaces of polynomials with no common zero, and we write their 
defining sequence as 
$$0\ra \cO_{\PP^n}(-c)\ra P^\vee\otimes \cO_{\PP^n}\ra\cE_P\ra 0.$$

After dualizing, we get $H^1(\cE^\vee_P)=S^cV^\vee/P$, while $H^q(\cE^\vee_P)=0$ for $q\ne 1$.
Twisting by $\cO_{\PP^n}(-c)$ and supposing $c\le n$, we get $H^1(\cE_P^\vee(-c))=\CC$
and $H^q(\cE_P^\vee(-c))=0$ for $q\ne 1$. The short sequence 
$$0\ra \cE_P^\vee(-c)\ra P^\vee\otimes \cE_P^\vee\ra\cE_P^\vee\otimes \cE_P\ra 0$$
then yields the following  consequences:

\begin{prop}
Any Drézet bundle $\cE_P$ on $\PP^n$ with fist Chern class 
$c_1(\cE_P)= c\le n$ is simple. Moreover 
$$H^1(End(\cE_P))=Hom(P, S^cV^\vee/P)=T_{[P]}Gr(e+1,S^cV^\vee),$$
and $H^q(End(\cE_P))=0$ for any $q>1$. 
\end{prop}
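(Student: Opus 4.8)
The plan is to extract everything from the long exact sequence in cohomology attached to the short exact sequence
$$0\ra \cE_P^\vee(-c)\ra P^\vee\otimes \cE_P^\vee\ra End(\cE_P)\ra 0$$
displayed just above the statement, after identifying $\cE_P^\vee\otimes\cE_P$ with $End(\cE_P)$. The two outer terms are completely understood from the cohomology already recorded: $H^q(\cE_P^\vee(-c))$ is concentrated in degree one, where it equals $\CC$ (this is exactly where the hypothesis $c\le n$ enters, since it forces $H^n(\cO_{\PP^n}(-c))=0$), while $H^q(P^\vee\otimes\cE_P^\vee)=P^\vee\otimes H^q(\cE_P^\vee)$ is likewise concentrated in degree one, where it equals $P^\vee\otimes(S^cV^\vee/P)$. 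So both contribute only in cohomological degree one, and the whole argument becomes a short diagram chase.

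First I would read off simplicity. Since $H^0(\cE_P^\vee(-c))=H^0(P^\vee\otimes\cE_P^\vee)=0$, exactness produces an injection $H^0(End(\cE_P))\hookrightarrow H^1(\cE_P^\vee(-c))=\CC$. As the identity endomorphism is a nonzero global section, $H^0(End(\cE_P))$ is a nonzero subspace of a one dimensional space, hence equals $\CC\cdot\Id$. This is precisely the assertion that $\cE_P$ is simple.

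Next I would continue along the sequence. The injection $H^0(End(\cE_P))\hookrightarrow H^1(\cE_P^\vee(-c))$ is then an isomorphism of one dimensional spaces, so exactness forces the subsequent map $H^1(\cE_P^\vee(-c))\ra H^1(P^\vee\otimes\cE_P^\vee)$ to vanish. Combined with $H^2(\cE_P^\vee(-c))=0$, this yields an isomorphism
$$H^1(End(\cE_P))\;\cong\; H^1(P^\vee\otimes\cE_P^\vee)=P^\vee\otimes(S^cV^\vee/P)=Hom(P,S^cV^\vee/P),$$
which is exactly the tangent space $T_{[P]}Gr(e+1,S^cV^\vee)$ to the Grassmannian at $[P]$. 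Finally, for $q\ge 2$ both neighbouring terms $H^q(P^\vee\otimes\cE_P^\vee)$ and $H^{q+1}(\cE_P^\vee(-c))$ vanish, so $H^q(End(\cE_P))=0$, which completes the list of claims.

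There is no real obstacle: the entire proof is a single long exact sequence, and its only genuine inputs are the two concentration statements for $\cE_P^\vee$ and $\cE_P^\vee(-c)$, which are already established. The one point deserving care is the simplicity step, where one must notice both that the arrow out of $H^0(End(\cE_P))$ is injective (because the two preceding terms vanish) and that its source is nonzero thanks to the identity; a second, tacit point is the reliance on $c\le n$ to guarantee $H^q(\cE_P^\vee(-c))=0$ for $q\ne 1$, which would otherwise fail through the top cohomology of $\cO_{\PP^n}(-c)$.
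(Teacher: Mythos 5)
Your proof is correct and follows exactly the paper's route: the paper derives the proposition from the long exact cohomology sequence of the same short exact sequence $0\ra \cE_P^\vee(-c)\ra P^\vee\otimes \cE_P^\vee\ra End(\cE_P)\ra 0$, using the concentration of $H^\bullet(\cE_P^\vee)$ and $H^\bullet(\cE_P^\vee(-c))$ in degree one. Your write-up merely makes explicit the diagram chase (including the role of the identity section and of the hypothesis $c\le n$) that the paper leaves to the reader.
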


In particular,  for $c\le n$  local deformations of Drézet bundles are again Drézet bundles, and 
we get a smooth family of vector bundles parametrized by an open subset of the Grassmannian
$Gr(e+1,S^cV^\vee)$.  For $n=2$ and $c>2$ 
%(and probably more generally as soon as $c>n$) \red{I want to delete this sentence "and probably.... because I think it is wrong} 
it is no longer true that local deformations of Drézet bundles are again Drézet bundles, see \cite[Theorem 4.4]{Costa-Macias-MiroRoig}.

\subsection{Sections and global generation of the twisted dual} From the short exact sequence that defines $\cE_P$, after dualizing, twisting and taking cohomology we get the exact 
sequence 
$$0\ra H^0(\cE_P^\vee(1))\ra P\otimes V^\vee\ra S^{c+1}V^\vee \ra H^1(\cE_P^\vee(1))\ra 0.$$
An obvious consequence is:

\begin{prop}\label{syz}
$H^0(\cE_P^\vee(1))\simeq Syz_1(P)$, the space of linear syzygies between polynomials in $P$. 
\end{prop}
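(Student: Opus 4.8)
The plan is to read the claim directly off the four-term exact sequence displayed immediately above the statement, which I may take as given. That sequence,
$$0\ra H^0(\cE_P^\vee(1))\ra P\otimes V^\vee\ra S^{c+1}V^\vee \ra H^1(\cE_P^\vee(1))\ra 0,$$
already exhibits $H^0(\cE_P^\vee(1))$ as the kernel of the middle arrow, so the entire content of the proof is to recognize that arrow and describe its kernel.

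First I would pin down the middle map $P\otimes V^\vee\ra S^{c+1}V^\vee$. It is the map on global sections induced by the surjection $P\otimes\cO_{\PP^n}(1)\ra\cO_{\PP^n}(c+1)$ obtained by dualizing and twisting the defining sequence of $\cE_P$; since $\PP^n=\PP(V)$ we have $H^0(\cO_{\PP^n}(1))=V^\vee$ and $H^0(\cO_{\PP^n}(c+1))=S^{c+1}V^\vee$. Tracking the identifications, the defining map $\cO_{\PP^n}(-c)\ra P^\vee\otimes\cO_{\PP^n}$ corresponds to the inclusion $P\hookrightarrow S^cV^\vee$, so its dual $P\otimes\cO_{\PP^n}\ra\cO_{\PP^n}(c)$ sends a generator $P_i\in P$ to the section $P_i\in H^0(\cO_{\PP^n}(c))$. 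After twisting by $\cO_{\PP^n}(1)$, the induced map on sections is therefore polynomial multiplication, $P_i\otimes\ell\mapsto \ell\cdot P_i$ in $S^{c+1}V^\vee$.

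With the map identified, the conclusion is immediate. Writing a general element of $P\otimes V^\vee$ as $\sum_i P_i\otimes\ell_i$ with $\ell_i\in V^\vee$, it lies in the kernel precisely when $\sum_i \ell_i P_i=0$ in $S^{c+1}V^\vee$, that is, when $(\ell_0,\dots,\ell_e)$ is a linear syzygy among the generators of $P$. By definition this kernel is $Syz_1(P)$, giving $H^0(\cE_P^\vee(1))\simeq Syz_1(P)$.

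I do not expect a genuine obstacle here: the statement is essentially a reformulation of the already-derived cohomology sequence. The only point deserving care is the bookkeeping with dualities and the convention $\PP^n=\PP(V)$, to make sure the middle arrow really is multiplication $P\otimes V^\vee\ra S^{c+1}V^\vee$ (and not, say, a contraction involving $P^\vee$), so that its kernel is the linear syzygy space rather than some dual object.
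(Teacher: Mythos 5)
Your proposal is correct and follows exactly the paper's route: the paper derives the same four-term cohomology sequence from the dualized, twisted defining sequence of $\cE_P$ and then declares the proposition "an obvious consequence," which is precisely the identification of the middle arrow with multiplication $P\otimes V^\vee\ra S^{c+1}V^\vee$ and of its kernel with $Syz_1(P)$ that you spell out. The only difference is that you make explicit the bookkeeping the paper leaves implicit.
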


\medskip \noindent {\it Definition.} The {\it Drézet bundle} $\cE_P$ on $\PP^n$ is 
{\it ordinary} if $H^1(\cE_P^\vee(1))=0$. This is equivalent to the condition that 
$$\forall C\in S^{c+1}V^\vee, \exists \ell_0,\ldots, \ell_e\in V^\vee \qquad C=\ell_0P_0+
\cdots +\ell_e P_e.$$
Of course this is only possible when $e$ is large enough. More precisely, it immediately follows from \cite{Hochster-Laksov} that  when
$$(e+1)(n+1)\ge \binom{n+c+1}{c+1},$$
a general Drézet bundle is ordinary.
In addition,  If $e$ is too
small for this condition to be fulfilled, then a general Drézet bundle verifies 
$H^0(\cE_P^\vee(1))=0$.

\medskip For ordinary Drézet bundles, we can use the snake lemma to reformulate the 
condition that $\cE_P^\vee(1)$ is globally generated at $[v]$ into the following condition:
$$\forall C\in S^{c+1}V^\vee, C(v)=0, \;\; \exists \ell_0,\ldots, \ell_e\in V^\vee, \hspace*{3cm}$$
$$\hspace*{3cm}\ell_0(v)=\cdots= \ell_e(v)=0,
\hspace*{1cm} C=\ell_0P_0+
\cdots +\ell_e P_e.$$
Dualizing, we get the following equivalent condition:

\begin{lemma}\label{criterion}
    Suppose the Drézet bundle $\cE_P$ is ordinary. Then $\cE_P^\vee(1)$ 
    is globally generated at $[v]$ if and only if the map
%    \jmc{I would say rather than "natural map", "the transpose of the topmost map"}
$$S^{c+1}V/\CC v^{c+1}\ra (V/\CC v)^{e+1}, \qquad T\mapsto (P_0\rfloor T,
\ldots , P_e\rfloor T)$$
is injective.
\end{lemma}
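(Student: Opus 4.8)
The plan is to observe that the condition stated just above the lemma---that every $C\in S^{c+1}V^\vee$ vanishing at $v$ can be written as $C=\ell_0P_0+\cdots+\ell_eP_e$ with each $\ell_i\in V^\vee$ also vanishing at $v$---is precisely the surjectivity of a single linear map, and that the map displayed in the lemma is nothing but its transpose. So the entire content of the lemma is linear duality: surjectivity of a map is equivalent to injectivity of its dual. The only real work is to identify the relevant dual spaces correctly and to recognize that the transpose of \emph{multiplication} by $P_i$ is \emph{contraction} by $P_i$.

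First I would introduce $v^\perp=\{\ell\in V^\vee:\ell(v)=0\}$ and the multiplication map
$$\mu_v\colon (v^\perp)^{\oplus(e+1)}\lra \{C\in S^{c+1}V^\vee: C(v)=0\},\qquad (\ell_0,\ldots,\ell_e)\mapsto \textstyle\sum_{i=0}^e\ell_iP_i.$$
This is well defined into the indicated target since $(\sum_i\ell_iP_i)(v)=\sum_i\ell_i(v)P_i(v)=0$, and by the snake-lemma reformulation preceding the lemma, global generation of $\cE_P^\vee(1)$ at $[v]$ is exactly the surjectivity of $\mu_v$ (it is here that ordinariness is used). Next I would record the two dualities coming from the perfect pairings $V^\vee\times V\to\CC$ and $S^{c+1}V^\vee\times S^{c+1}V\to\CC$. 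Restriction of linear forms gives $v^\perp\cong(V/\CC v)^\vee$, hence $(v^\perp)^{\oplus(e+1)}\cong\big((V/\CC v)^{\oplus(e+1)}\big)^\vee$. Evaluation at $v$ is the pairing with $v^{c+1}$, so $\{C:C(v)=0\}=(\CC v^{c+1})^\perp$, whose dual is $S^{c+1}V/\CC v^{c+1}$. Thus $\mu_v^\vee$ is automatically a map $S^{c+1}V/\CC v^{c+1}\to(V/\CC v)^{\oplus(e+1)}$, which is exactly the shape demanded by the lemma.

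It then remains to compute $\mu_v^\vee$ explicitly. The $i$-th component of $\mu_v$ is multiplication by $P_i$, and the defining adjunction of the contraction (apolarity) action is $\langle \ell P_i,T\rangle=\langle\ell,P_i\rfloor T\rangle$ for all $\ell\in V^\vee$ and $T\in S^{c+1}V$. Hence the transpose of multiplication by $P_i$ is the contraction $T\mapsto P_i\rfloor T$, and passing to the quotients is legitimate because $P_i\rfloor v^{c+1}=P_i(v)\,v\in\CC v$. Therefore $\mu_v^\vee(\overline T)=(\overline{P_0\rfloor T},\ldots,\overline{P_e\rfloor T})$, which is precisely the map in the statement. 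Since a linear map between finite-dimensional spaces is surjective if and only if its transpose is injective, surjectivity of $\mu_v$ is equivalent to injectivity of $\mu_v^\vee$, completing the argument.

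The only genuinely delicate point is the bookkeeping of the duals: one must check that restricting the source of $\mu_v$ to $(v^\perp)^{\oplus(e+1)}$ dualizes to the projection $V^{\oplus(e+1)}\to(V/\CC v)^{\oplus(e+1)}$, while corestricting the target to the forms vanishing at $v$ dualizes to the passage $S^{c+1}V\to S^{c+1}V/\CC v^{c+1}$, so that the two quotients match up coherently with the contraction. I expect this compatibility check---together with fixing the scalar conventions in the apolarity pairing---to be where care is required, but no actual obstruction arises, since the identifications are forced by the perfect pairings and biduality.
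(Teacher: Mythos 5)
Your proof is correct and follows exactly the route the paper takes: the paper derives the surjectivity condition (every $C$ vanishing at $v$ is a combination $\sum\ell_iP_i$ with $\ell_i(v)=0$) from the snake lemma and then simply says ``Dualizing, we get the following equivalent condition,'' which is precisely the transpose computation you carry out in detail. Your identification of $v^\perp$ with $(V/\CC v)^\vee$, of $(\CC v^{c+1})^\perp$ with the dual of $S^{c+1}V/\CC v^{c+1}$, and of contraction as the adjoint of multiplication is the intended content of that one word, so nothing further is needed.
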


Here $Q\rfloor T$ is just the vector obtained by contracting the tensor 
$T$ with the polynomial $Q$. 
Note the obvious consequence: 

\begin{coro}\label{inheritance} 
Suppose that $P'\subset P\subset S^cV^\vee$. If $\cE_{P'}$ is an ordinary Dr\'ezet bundle and $\cE_{P'}^\vee(1)$ is generated by global sections, then $\cE_{P}$ is also an ordinary Dr\'ezet bundle and $\cE_{P}^\vee(1)$ is also generated by global sections.
\end{coro}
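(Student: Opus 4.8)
The plan is to observe that both conditions defining the statement --- being \emph{ordinary} and having globally generated twisted dual --- are monotone under enlarging the space of polynomials, so the corollary reduces to the slogan ``adding generators cannot hurt''. First I would record that $\cE_P$ is indeed a genuine Drézet bundle: the common zero locus of $P$ is contained in that of $P'$, which is empty because $\cE_{P'}$ is a vector bundle, so $P$ has no common zero and the map $\cO_{\PP^n}(-c)\ra P^\vee\otimes\cO_{\PP^n}$ is a subbundle inclusion of the required form.

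For ordinariness, I would read off from the exact sequence
$$0\ra H^0(\cE_P^\vee(1))\ra P\otimes V^\vee\ra S^{c+1}V^\vee \ra H^1(\cE_P^\vee(1))\ra 0$$
that $\cE_P$ is ordinary precisely when the multiplication map $\mu_P\colon P\otimes V^\vee\ra S^{c+1}V^\vee$, $Q\otimes\ell\mapsto \ell Q$, is surjective. Since $P'\subset P$ we have $\tim\mu_{P'}\subseteq\tim\mu_P$, so surjectivity of $\mu_{P'}$ --- which holds because $\cE_{P'}$ is ordinary --- forces surjectivity of $\mu_P$. Hence $\cE_P$ is ordinary, and in particular Lemma \ref{criterion} becomes applicable to $P$.

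For global generation I would work with the basis-free form of the criterion of Lemma \ref{criterion}. Fixing $[v]$, contraction against $T\in S^{c+1}V$ assigns to each $Q\in S^cV^\vee$ the class $\overline{Q\rfloor T}\in V/\CC v$, and the criterion says that $\cE_P^\vee(1)$ is globally generated at $[v]$ if and only if the linear map
$$\phi_{P,v}\colon S^{c+1}V/\CC v^{c+1}\lra \Hom(P,V/\CC v),\qquad T\mapsto\big(Q\mapsto\overline{Q\rfloor T}\big),$$
is injective; choosing a basis $P_0,\ldots,P_e$ of $P$ recovers the map written in Lemma \ref{criterion}. The decisive point is that restriction of homomorphisms along the inclusion $P'\hookrightarrow P$ yields $\phi_{P',v}=\mathrm{res}\circ\phi_{P,v}$, so that $\ker\phi_{P,v}\subseteq\ker\phi_{P',v}$. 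Because $\cE_{P'}^\vee(1)$ is globally generated, $\phi_{P',v}$ is injective for every $v$ by Lemma \ref{criterion} applied to $P'$; therefore $\ker\phi_{P,v}=0$ for every $v$, and $\cE_P^\vee(1)$ is globally generated everywhere.

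I do not expect a serious obstacle: the whole assertion is a soft monotonicity argument, which is why it is flagged as an ``obvious consequence''. The only step requiring a little care is the passage to the invariant map $\phi_{P,v}$, since it is this reformulation that makes the compatibility $\phi_{P',v}=\mathrm{res}\circ\phi_{P,v}$ transparent. Working instead with the basis-dependent version, one would argue identically by noting that lying in the kernel for $P$ forces the vanishing of strictly more contraction components $P_i\rfloor T$ than lying in the kernel for $P'$.
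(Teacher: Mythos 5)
Your proposal is correct and is exactly the argument the paper has in mind: the authors give no explicit proof, simply labelling the corollary an ``obvious consequence'' of Lemma \ref{criterion}, and your monotonicity argument (the image of the multiplication map $P\otimes V^\vee\ra S^{c+1}V^\vee$ only grows, while the kernel of the contraction map of Lemma \ref{criterion} only shrinks, as $P$ is enlarged) is precisely the intended one. The invariant reformulation via $\Hom(P,V/\CC v)$ is a clean way to make the compatibility under restriction transparent, but it does not change the substance.
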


Applying Lemma \ref{criterion} we can prove that a generic Dr\'ezet bundle of 
high enough rank always gives rise to a matrix of linear forms of constant rank. 

\begin{theorem}\label{drezet-generation}
Suppose that the rank $e$ is large enough, more precisely that 
$$\dim P > \Bigg(\frac{1}{c}+\frac{1}{n+1} \Bigg)\dim S^cV^\vee.$$
Then the general Drézet bundle $\cE_P$ is ordinary, and 
$\cE_P^\vee(1)$ is generated by global sections. 
\end{theorem}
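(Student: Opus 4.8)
The plan is to reduce the theorem to two separate genericity statements—ordinariness and global generation of the twisted dual—each of which I will establish by a dimension count combined with the injectivity criterion of Lemma~\ref{criterion}. For ordinariness, I would invoke the Hochster--Laksov result already cited: the condition $\dim P\cdot(n+1)\ge\dim S^{c+1}V^\vee=\binom{n+c+1}{c+1}$ guarantees that the general $P$ gives a surjection $P\otimes V^\vee\to S^{c+1}V^\vee$, hence $H^1(\cE_P^\vee(1))=0$. Since $\dim S^{c+1}V^\vee=\frac{n+c+1}{c+1}\dim S^cV^\vee$, one checks that the hypothesis $\dim P>\bigl(\tfrac1c+\tfrac1{n+1}\bigr)\dim S^cV^\vee$ is at least as strong as $\dim P\ge\frac1{n+1}\binom{n+c+1}{c+1}$, so ordinariness comes essentially for free from the stated bound.

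The main work is global generation. By Lemma~\ref{criterion}, once $\cE_P$ is ordinary, $\cE_P^\vee(1)$ is globally generated at $[v]$ exactly when the contraction map
$$\Phi_v:S^{c+1}V/\CC v^{c+1}\longrightarrow (V/\CC v)^{e+1},\qquad T\mapsto(P_0\rfloor T,\ldots,P_e\rfloor T)$$
is injective. First I would fix $v$, say $v=e_0$, and work in the affine chart; contracting with $v$ repeatedly, the source may be understood concretely, and I want to show that for general $P$ the kernel of $\Phi_v$ is zero for every $[v]\in\PP^n$ simultaneously. The natural strategy is an incidence-variety argument: consider the locus
$$Z=\{([v],[T])\;:\;T\in S^{c+1}V,\ T\notin\CC v^{c+1},\ \Phi_v(T)=0\}\subset\PP^n\times\PP(S^{c+1}V),$$
and show that for general $P$ the second projection misses every fiber, i.e.\ $Z_P=\emptyset$. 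The key point is that $\Phi_v(T)=0$ means each $P_i$ lies in the subspace of $S^cV^\vee$ annihilating $T$ modulo the one-dimensional correction coming from $v^{c+1}$; equivalently $P\subset\langle v\rangle^\perp$-type conditions cut out by $T$.

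Concretely I would estimate, for fixed $[v]$ and $[T]$, the codimension of the space of $P$ for which $\Phi_v(T)=0$. The condition $P_i\rfloor T\in\CC\bar v$ for all $i$ says that each generator's contraction lies in a line, so the span $P$ must sit inside a subspace $W_{v,T}\subset S^cV^\vee$ whose codimension I can bound from below using that the contraction map $S^cV^\vee\to V/\CC v$ (given by $Q\mapsto Q\rfloor T\bmod\CC\bar v$) is surjective for generic $T$, giving codimension roughly $n$. Requiring the $(e+1)$-dimensional $P$ to lie in such a $W_{v,T}$ is then a Schubert-type condition on $Gr(e+1,S^cV^\vee)$ of codimension about $(\dim S^cV^\vee-e-1-\dim W_{v,T})\cdot(\text{something})$; the crux is that this codimension must exceed $\dim\PP^n+\dim\PP(S^{c+1}V)=n+\dim S^{c+1}V-1$, which is precisely where the hypothesis on $\dim P$ enters. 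I expect the hard part to be controlling the bad fibers where $T$ is non-generic (so that the contraction map degenerates, enlarging $W_{v,T}$); there the codimension estimate weakens and I must check separately, perhaps by stratifying $\PP(S^{c+1}V)$ by the rank of contraction or by using the inheritance Corollary~\ref{inheritance} to reduce to a minimal $\dim P$ where the bound is tight. Once the total codimension of $Z$ beats the dimension of the base, a general $P$ yields empty $Z_P$, which gives global generation at every point and completes the proof.
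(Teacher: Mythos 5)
Your strategy is the same as the paper's: reduce ordinariness to Hochster--Laksov, then prove global generation by an incidence-variety dimension count over $\PP(V)\times\PP(S^{c+1}V)$, stratified by the degeneracy of the contraction map $Q\mapsto Q\rfloor T$. You have correctly located both the criterion (Lemma~\ref{criterion}) and the danger (non-generic $T$ for which the subspace $W_{v,T}$ of admissible quadrics grows), and your proposed remedy --- stratify by the rank of the contraction --- is exactly what the paper does, calling $\dim W_T$ the \emph{global rank} of $T$ and noting that $T\in S^{c+1}W_T$.

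What is still missing is the decisive computation, which is where the stated hypothesis on $\dim P$ actually gets used. On the stratum where $W_T$ has dimension $d$, the paper bounds the fiber of the incidence variety by $\dim Gr(d,n+1)+\dim S^{c+1}\CC^d+\dim Gr(e+1,\dim S^cV^\vee-(d-1))$ (the Schubert condition that $P$ lie in $\theta_T^{-1}(\CC v)$, of codimension $d-1$, costs $(e+1)(d-1)$, not the expression you wrote), and the excess codimension is $N_d=(d-1)(e+1)-d(n+1-d)-\binom{c+d}{c+1}$. The point you have not supplied is how to verify $N_d>0$ for \emph{all} $2\le d\le n+1$ at once: the paper observes $N_d-N_{d+1}=n-e+\binom{c+d}{c}$, so $N_d$ is convex in $d$ and only the endpoints $d=2$ and $d=n+1$ need checking; these give precisely $e\ge 2n+c$ and $e+1>\frac1n\binom{c+n+1}{c+1}$, whose conjunction is the theorem's hypothesis via $\dim S^cV^\vee=\frac{c+1}{c+n+1}\binom{c+n+1}{c+1}$. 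You should also dispose of the stratum $d=1$ separately (there $T$ is a power $t^{c+1}$ and the contractions $P_i\rfloor T$ being multiples of $v$ forces $P$ to have a common zero at $t$, which is excluded for generic $P$ with $e\ge n$). Without the convexity reduction, or some substitute controlling every stratum, the argument is not yet a proof.
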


\proof First observe that the map $\theta_T$ from $S^cV^\vee$ to $V$ 
that sends $Q$ to $Q\rfloor T$
is in general surjective. Indeed, if it is not, suppose its image is contained in a 
hyperplane $H=\langle e_1,\ldots , e_n\rangle$ of $V=\langle e_0,\ldots , e_n\rangle$; then 
the tensor $T$, when written in this basis, can have only zero coefficient on any monomial
$e_0M$ involving $e_0$; otherwise, contracting with the monomial  $M^\vee$ in the dual 
coordinates we would get a vector with a nonzero coefficient on $e_0$, a contradiction. 
In other words $T$ has to belong to $S^{c+1}H$. More generally, if $W_T\subset V$
 is the image of $\theta_T$, then $T$ has to belong to $S^{c+1}W_T$, and $W_T$ is minimal for this property. Let us call the dimension of $W_T$ the {\it global rank} of $T$.

 Then consider the set $I$ where global generation fails, that is the set
 $$I:= \Big\{ (P,[v],[T])\in Gr(e+1,S^cV^\vee)\times \PP(V)\times \PP(S^{c+1}V),
 \hspace*{2cm}$$
 $$\hspace*{5cm} T\notin \CC v^{c+1}, \;\; 
 P_0\rfloor T, \ldots , P_e\rfloor T\in \CC v \Big\}.$$
Let us show that the dimension of $I$ is strictly smaller than the dimension of 
$Gr(e+1,S^cV^\vee)$. For this we project to $\PP(V)\times \PP(S^{c+1}V)$. 
If $([v],[T])$ is in the image, then necessarily $v\in W_T$. Denote the dimension of $W_T$ 
by $d$. 
Then $P$ must be a subspace of $\theta_T^{-1}(\CC v)$, which has codimension $d-1$.
Note that if $d=1$, $T$ has to be a multiple of some $t^{c+1}$ with $t$ independent 
of $v$, and then $P_0\rfloor T, \ldots , P_e\rfloor T$ are multiples of $t$; since
they are also multiples of $v$, they must in fact be zero, which means that $P_0(t)=\cdots =P_e(t)$. In other words $P$ has a common zero and cannot be generic if $e\ge n$. So we can exclude this 
case and bound the 
dimension of $I$ by the maximum, for $2\le d\le n+1$,  of 
$$%\dim(I)=\max_{2\le w\le n+1}(
\dim Gr(d,n+1)+\dim S^{c+1}\CC^d+\dim Gr(e+1,\dim S^{c}\CC^{n+1}-(d-1)).$$
Let us rewrite this quantity as $\dim Gr(e+1,\dim S^{c}\CC^{n+1})-N_d$
where 
$$N_d=(d-1)(e+1)-d(n+1-d)-\binom{c+d}{c+1}.$$
We will therefore be able to conclude that for a general $P\in Gr(e+1,\dim S^{c}\CC^{n+1})$,
there is no $([v],[T])$ contradicting the global generation of $\cE_P^\vee(1)$ as soon
as $N_d>0$ for all $d$. Note that 
$$N_d-N_{d+1}=n-e+\binom{c+d}{c}$$
implies that $N_d$ is convex as a function of $d$.  So we only need to check that
$N_2$ and $N_{n+1}$ are strictly positive, which amounts to
$$ e\ge 2n+c \quad \mathrm{and}\quad e+1>\frac{1}{n}\binom{c+n+1}{c+1}.$$
Since $\dim S^cV^\vee=\binom{c+n}{c}=\frac{c+1}{c+n+n}\binom{c+n+1}{c+1}$, our claim follows.\qed

\medskip\noindent {\it Remark}.
In case the  Drézet bundle  is not ordinary, a substitute to Lemma \ref{criterion} is the following criterion for global generation:

\begin{lemma}\label{criterion-bis}
The Drézet bundle $\cE_P$ is generated by global sections at $[v]$ if and only if,
for any $q_0\in P$ such that $q_0(v)=0$, one has $q_0 V^\vee\subset P v^\perp$
in $S^cV^\vee$. 
\end{lemma}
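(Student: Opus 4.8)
The plan is to compute the evaluation of global sections of the twisted dual $\cE_P^\vee(1)$ fiberwise at $[v]$ and read off surjectivity, mirroring the proof of Lemma~\ref{criterion} but dropping the ordinariness hypothesis (here $\cE_P^\vee(1)$ is the bundle whose global generation is in question, as in that lemma). By Proposition~\ref{syz} we already have $H^0(\cE_P^\vee(1))\simeq Syz_1(P)$, realized as the tuples $(\ell_0,\dots,\ell_e)\in(V^\vee)^{e+1}$ with $\sum_i\ell_iP_i=0$. Dualizing the defining sequence and twisting by $\cO_{\PP^n}(1)$ produces the short exact sequence of vector bundles
$$0\ra\cE_P^\vee(1)\ra P\otimes\cO_{\PP^n}(1)\ra\cO_{\PP^n}(c+1)\ra 0,$$
whose map on sections is the multiplication $P\otimes V^\vee\ra S^{c+1}V^\vee$. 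The first task is to extract the fiber at $[v]$ from this sequence.

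Fixing a linear form $\lambda$ with $\lambda(v)=1$ trivializes $\cO(1)_{[v]}\cong V^\vee/v^\perp$ and $\cO(c+1)_{[v]}\cong S^{c+1}V^\vee/\{C:\ C(v)=0\}$; under $p\otimes\overline{\lambda}\mapsto p$ the fiber map becomes $P\ra\CC,\ p\mapsto p(v)$, so I would identify $\cE_P^\vee(1)_{[v]}$ with its kernel
$$\cE_P^\vee(1)_{[v]}\;\cong\;P_v:=\{p\in P:\ p(v)=0\},$$
of dimension $e$ because $P$ has no common zero. Pushing a syzygy $(\ell_i)$ through $\cE_P^\vee(1)\hookrightarrow P\otimes\cO(1)$ and evaluating at $[v]$ gives $\sum_i\ell_i(v)P_i$, which lies in $P_v$ since the relation $\sum_i\ell_iP_i=0$ read at $v$ is $\sum_i\ell_i(v)P_i(v)=0$. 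Therefore $\cE_P^\vee(1)$ is globally generated at $[v]$ precisely when $\{\sum_i\ell_i(v)P_i:\ (\ell_i)\in Syz_1(P)\}=P_v$.

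It remains to convert this surjectivity into the stated condition, and this is the real content. Writing $q_0=\sum_ib_iP_i$, the elements of $P_v$ are exactly those with $\sum_ib_iP_i(v)=0$, so the equality above says that every such $(b_i)$ is the value $(\ell_i(v))$ of some syzygy. I would show this is equivalent to $q_0V^\vee\subset Pv^\perp$ as follows. Since $q_0\ell\in Pv^\perp$ automatically for $\ell\in v^\perp$, and any $\ell$ splits as $\ell(v)\lambda+(\ell-\ell(v)\lambda)$ with second summand in $v^\perp$, the inclusion $q_0V^\vee\subset Pv^\perp$ reduces to the single membership $q_0\lambda\in Pv^\perp$. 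Now $q_0\lambda=\sum_jm_jP_j$ with $m_j\in v^\perp$ rearranges to $\sum_j(b_j\lambda-m_j)P_j=0$, a syzygy with value $(b_j)$ at $v$; conversely any syzygy $(\ell_j)$ with $\ell_j(v)=b_j$ decomposes as $\ell_j=b_j\lambda+m_j$ with $m_j\in v^\perp$ and forces $q_0\lambda=-\sum_jm_jP_j\in Pv^\perp$. Ranging over all $q_0\in P_v$ then gives the criterion.

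The argument is essentially bookkeeping and uses neither ordinariness nor any vanishing of $H^1(\cE_P^\vee(1))$, which is exactly why it serves as a substitute for Lemma~\ref{criterion}. The two points that need care are the precise identification $\cE_P^\vee(1)_{[v]}\cong P_v$, so that the target of evaluation is pinned down rather than merely its dimension, and the splitting $\ell_j=\ell_j(v)\lambda+m_j$, which transports the ``values at $v$ of a syzygy'' picture into the ``$q_0\lambda$ lands in $Pv^\perp$'' picture. I do not expect any genuine obstacle beyond keeping the trivializations consistent.
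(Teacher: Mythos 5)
Your argument is correct and complete. Note that the paper states Lemma~\ref{criterion-bis} in a remark with no proof at all, so there is nothing to compare line by line; what you have written is exactly the natural argument the authors leave implicit, namely the untransposed version of the snake-lemma computation they carry out for Lemma~\ref{criterion}, with the dualization step (which is what requires ordinariness there) omitted. You correctly read the statement as concerning $\cE_P^\vee(1)$ rather than $\cE_P$ (the latter is a quotient of $P^\vee\otimes\cO_{\PP^n}$ and hence always globally generated, so the paper's phrasing is a slip), and the two pivotal points are handled properly: the identification $\cE_P^\vee(1)_{[v]}\cong P_v=\{p\in P:\ p(v)=0\}$ via the trivialization by $\lambda$, and the translation between ``$(b_j)=(\ell_j(v))$ for some linear syzygy $(\ell_j)$'' and ``$q_0\lambda\in Pv^\perp$'' via the splitting $V^\vee=\CC\lambda\oplus v^\perp$. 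The only cosmetic remark is that the reduction of $q_0V^\vee\subset Pv^\perp$ to the single condition $q_0\lambda\in Pv^\perp$ is the cleanest way to see why the lemma's condition, which quantifies over all of $V^\vee$, is really a codimension-one condition per $q_0$; you have that right.
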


\begin{example}\label{drezet-steiner}
%\smallskip\noindent {\it Example 8.}
On $\PP^2=\PP(V_3)$, the universal  Drézet bundle $\cE_2$ 
with first Chern class $c=2$ has rank $5$. It is an extension of $S^2Q$ by 
$Q(-1)$ and $H^0(\cE_2^\vee(1))\simeq \fsl_3$. This vector bundle gives rise to the
following $6\times 8$
matrix of constant rank five  \cite[Proposition 11]{landsberg-manivel-bounded}:
$$\begin{pmatrix}
-x_1 & -x_2 & 0 & 0  &0 &0 &0 &0   \\ 
0  &0 &x_0 &0 &-x_2 &0 & 0& 0 \\
0   &0 & 0 & x_0& 0& x_1& 0& 0  \\
x_0    &0 &-x_1 &0 &0 &0   &-x_2 & x_2 \\
0     &x_0 &0 & -x_2& 0&0   & 0& -x_1\\
0      &0 &0 &0   &x_1 &-x_2 &x_0 &0  
\end{pmatrix}.$$
This is in fact the twisted dual of the Steiner bundle discussed in Example~\ref{steiner-drezet}.

We can factor out a generic section by restricting to the hyperplane of 
quadrics orthogonal to the tensor $t=e_0e_1+e_2^2$, which is 
$$H=\langle q_1= x_0x_1-x_2^2, q_2=x_0x_2, q_3=x_1x_2, q_4=x_0^2, q_5=x_1^2\rangle.$$
Note that $t$ is invariant under the action of a copy of the orthogonal group 
in three variables, which has only two orbits in $\PP^2$. Therefore,  to prove that 
$\cE_H^\vee(1)$ is globally generated, it is enough to check it at one point 
of the closed orbit, say $[e_0]$. So consider a cubic symmetric tensor 
$T=\sum_{ijk}T_{ijk}e_ie_je_k$, and suppose that the contractions with all quadrics 
in $H$ give multiples of $e_0$. Applying this successively to $q_2,q_3,q_4,q_5$
%$x_0x_2, x_1x_2, x_0^2, x_1^2$ 
yields $T_{012}=T_{022}=0$, $T_{112}=T_{122}=0$, $T_{001}=T_{002}=0$ and $T_{111}=T_{112}=0$. 
There only remains $T=T_{000}e_0^3+3T_{011}e_0e_1^2+T_{222}e_2^3$, whose contraction with the remaining quadric $q_1$ is $3T_{011}e_1-T_{222}e_2$. This is a multiple of $e_0$ if and only if $=T_{000}e_0^3$, and we conclude that $\cE_H^\vee(1)$ is generated by global sections. 
It has five independent sections given by the five syzygies 
$$x_0q_1-x_1q_4+x_2q_2, x_1q_1-x_0q_5+x_2q_3, x_0q_2-x_2q_4, x_1q_1-x_0q_3, x_1q_3-x_2q_5,$$
hence the following $5\times 5$ matrix of constant rank four:
$$\begin{pmatrix}
x_0 & x_2 & 0 & -x_1 &0   \\ 
x_1  &0 &x_2 &0 &-x_0  \\
0   &x_0 & 0 & -x_2& 0  \\
0 &x_1 &-x_0 &0 &0   \\
0 &0    &x_1 &0 & -x_2 
\end{pmatrix}.$$

Note that if we replace $t$ by some rank two tensor $t'=e_0e_1$, any quadric in its 
orthogonal hyperplane $H'$ contracts the tensor $T=e_0e_1^2$ to a multiple of $e_0$, 
showing that $\cE_{H'}^\vee(1)$ is not globally generated at $[e_0]$. 

If we degenerate further to a rank one tensor $t''=e_0^2$, then the orthogonal hyperplane
$H''\subset S^2V^\vee$ is such that $H''V^\vee$ is only a hyperplane in $S^3V^\vee$, so that 
$\cE_{H''}$ is not ordinary: it has $h^1(\cE_{H''}^\vee(1))=1$ and $h^0(\cE_{H''}^\vee(1))=6$. 
We get in this case a $6\times 5$ matrix of rank at most four:
$$\begin{pmatrix}
x_0 & 0 &-x_1 & 0 &0   \\ 
x_2 & 0  &0 &-x_1 &0  \\
0   &x_0 & 0 &0 & -x_2  \\
0 &x_1 &0&-x_2  &0   \\
0 &0    &x_2 &-x_0&0\\
0&0& x_2& 0&-x_1 
\end{pmatrix}.$$
The minor corresponding to rows $2345$ (resp. $1246$) and columns $1345$ (resp. $2345$)
is $x_2^4$  (resp. $x_1^4$), showing that the rank can drop only at $[e_0]$, where it is 
indeed equal to three. In particular, although $\cE_{H''}(1)$ has extra global sections, it is 
not globally generated. 
\end{example}

\section{Classification for $c_1=2$ and matrices of constant rank four and five}

Globally generated vector bundles on projective spaces
were classified for $c_1\le 5$ \cite{su1, su2, anghel-manolache, anghel-coanda-manolache}. We need to determine inside this classification which vector  bundles 
$\cE$ are such that $\cE^\vee(1)$ is also generated by global sections. Recall that this implies that $\cE$ is uniform, so we can suppose that the rank of $\cE$ is
$e\ge n+2$; otherwise $\cE$ is known to be a direct sum
of irreducible homogeneous bundles (line bundles, the quotient bundle and its twisted dual). 

We can also suppose that $\cE$ is indecomposable. In particular, since  $\cE^{\vee}(1)$ is
globally generated and has no trivial factor, $H^0(\cE(-1))$ must vanish. Together with the
rank condition $e\ge n+2$, this drastically simplifies the classification in case the 
first Chern class $c=c_1(\cE)\le 3$, and only a handful of cases have to be discussed. 
Note that there is also a partial classification for $c=4$ and even $c=5$, but only 
under the hypothesis that $h^1(\cE^\vee)=0$, which for our matters is unrealistic.

For $c=1$, the fact that $\cE$ can only be $\cO_{\PP^n}(1)$ or the quotient bundle $Q$ 
already appears  in \cite[Theorem 2.4]{MR954659}. In section 4 of the same paper, 
Eisenbud and Harris suggest to   focus (for constant and also bounded rank) 
on the case $c_1=2$, which we shall discuss now. 

\subsection{Vector bundles with $c_1=2$} 

According to \cite[Theorem 1.1]{su1}, there are only two possibilities.

\begin{prop}\label{classification-c=2} 
An indecomposable, globally generated vector bundle $\cE$ on $\PP^n$ 
of rank $e\ge n+2$, with $c_1(\cE)= 2$,  must be either a Steiner bundle or a Dr\'ezet bundle. 
\end{prop}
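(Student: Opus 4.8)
The plan is to read off the result from the classification of globally generated bundles with $c_1=2$ in \cite[Theorem 1.1]{su1}, the conceptual engine being the evaluation sequence and the numerical constraints it imposes on its kernel. Since $\cE$ is globally generated, setting $N=h^0(\PP^n,\cE)$ we have
$$0\ra \cK\ra H^0(\PP^n,\cE)\otimes\cO_{\PP^n}\ra \cE\ra 0,$$
with $\cK$ locally free of rank $N-e$ and $c_1(\cK)=-c_1(\cE)=-2$. Taking global sections gives $H^0(\PP^n,\cK)=0$, because a constant $\CC$-combination of a basis of $H^0(\PP^n,\cE)$ that dies in $\cE$ must be trivial; dualizing the sequence (legitimate as all terms are locally free) exhibits $\cK^\vee$ as a quotient of $H^0(\PP^n,\cE)^\vee\otimes\cO_{\PP^n}$, so that $\cK^\vee$ is itself globally generated.

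First I would record the clean outcome in the case where $\cK$ is a sum of line bundles, say $\cK\simeq\bigoplus_i\cO_{\PP^n}(a_i)$. Then global generation of $\cK^\vee$ forces $a_i\le 0$ and the vanishing $H^0(\PP^n,\cK)=0$ forces $a_i<0$, so every $a_i\le -1$; since $\sum_i a_i=-2$, the only options are $\cK\simeq\cO_{\PP^n}(-1)^{\oplus 2}$, giving the Steiner presentation
$$0\ra \cO_{\PP^n}(-1)^{\oplus 2}\ra \cO_{\PP^n}^{\oplus e+2}\ra \cE\ra 0,$$
or $\cK\simeq\cO_{\PP^n}(-2)$, giving the Drézet presentation
$$0\ra \cO_{\PP^n}(-2)\ra \cO_{\PP^n}^{\oplus e+1}\ra \cE\ra 0.$$
Indecomposability guarantees that neither presentation splits off a trivial summand, so in this case $\cE$ is genuinely a Steiner or a Drézet bundle.

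It therefore remains to exclude all globally generated bundles with $c_1=2$ whose evaluation kernel does not split, and this is exactly where \cite[Theorem 1.1]{su1} enters. That theorem lists every globally generated bundle of first Chern class $2$ on $\PP^n$; apart from the two families above, the only indecomposable entries are a handful of sporadic bundles of small rank, occurring only on $\PP^2$ and $\PP^3$ --- for instance the rank-two twisted null-correlation bundle on $\PP^3$ and the rank-three bundles $\wedge^2Q$ and $\Omega^1_{\PP^3}(2)$, each of which has rank at most $n+1$. Consequently the hypothesis $e\ge n+2$ eliminates all of them, and the previous paragraph identifies the survivors as Steiner or Drézet bundles.

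The main obstacle is the faithful matching against \cite[Theorem 1.1]{su1}: one must verify, case by case, that every indecomposable item on their list is either of Steiner type (kernel $\cO_{\PP^n}(-1)^{\oplus 2}$), of Drézet type (kernel $\cO_{\PP^n}(-2)$), or of rank at most $n+1$. The delicate point within this is that globally generated bundles with $c_1=2$ and a non-split kernel do exist --- such as the cokernels built from the null-correlation bundle on $\PP^3$ --- so the exclusion cannot be purely numerical on $\cK$ alone; one genuinely needs the result of \cite{su1} to certify that every such bundle is either decomposable or of rank $\le n+1$, hence incompatible with our hypotheses.
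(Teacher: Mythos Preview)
Your proposal is correct and follows essentially the same route as the paper: both rely on \cite[Theorem~1.1]{su1} to conclude. The paper's proof is in fact just the one-line citation, without any preliminary analysis. Your additional discussion of the evaluation kernel $\cK$ is sound and gives useful intuition for why only the two presentations survive, but it is not logically necessary once you invoke \cite{su1} anyway---you yourself note that the non-split-kernel cases still require the classification theorem, so the split-kernel paragraph is expository rather than structural. One minor point: the sentence ``Indecomposability guarantees that neither presentation splits off a trivial summand'' is superfluous, since a bundle admitting a Steiner or Dr\'ezet presentation is by definition Steiner or Dr\'ezet, regardless of whether it has trivial factors.
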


\subsubsection{The Steiner case}

%or fall into an exact sequence 
%$$0\ra \cO_{\PP^n}\ra \cO_{\PP^n}(1)\op Q\op \cO_{\PP^n}^{\oplus e-n}\ra \cE\ra 0.$$
Steiner bundles with $c_1=2$ are defined by an exact sequence of the form
$$0\ra \cO_{\PP^n}(-1)\oplus \cO_{\PP^n}(-1) \ra  \cO_{\PP^n}^{\oplus e+2}\ra \cE\ra 0.$$
Observe by dualizing this sequence that $H^0(\cE^\vee)$ has to be nonzero as soon as 
$e>2n$, and then $\cE$ admits a trivial factor. If $e=2n$ and $H^0(\cE^\vee)=0$ then 
necessarily $\cE=Q\oplus Q$ is again decomposable. %
%In fact we will prove the following statement,
%which confirms the conclusions of Example 2:

\begin{prop}\label{steiner-c=2}
    There is no indecomposable Steiner bundle $\cE$ such that $c_1(\cE)=2$ and 
    $\cE^\vee(1)$ is generated by global sections. 
    \end{prop}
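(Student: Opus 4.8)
The plan is to prove this by a constraint-counting argument on Steiner bundles with $c_1 = 2$, combined with the dimension restrictions already established. We are given an indecomposable Steiner bundle $\cE$ of rank $e \ge n+2$ with $c_1(\cE) = 2$, so $c = 2$ in the defining sequence
$$0\ra \cO_{\PP^n}(-1)^{\oplus 2} \ra \cO_{\PP^n}^{\oplus e+2}\ra \cE\ra 0.$$
The observations preceding the statement already pin down the relevant range: if $e > 2n$ then $H^0(\cE^\vee) \ne 0$ forces a trivial summand, contradicting indecomposability; and if $e = 2n$ with $H^0(\cE^\vee) = 0$ then $\cE \cong Q \oplus Q$, again decomposable. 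So the only surviving possibility is $e < 2n$, i.e. $n+2 \le e \le 2n-1$.

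First I would invoke Proposition \ref{miroroigmarchesi}: a $1$-uniform Steiner bundle with no rank-one summand satisfies $c + 2n - 2 \le e \le cn$. Since our $\cE$ must be $1$-uniform (it arises from a constant-rank matrix, so by Proposition, both $\cE$ and $\cE^\vee(1)$ are globally generated) and indecomposable (hence has no line-bundle summand), plugging in $c = 2$ gives $2n \le e \le 2n$, forcing $e = 2n$ exactly. But the case $e = 2n$ was just excluded, since the only indecomposable option there is $Q\oplus Q$, which is not indecomposable. This immediately rules out every $e$ in the admissible window and completes the argument. The key is that the lower bound $e \ge c + 2n - 2 = 2n$ from Proposition \ref{miroroigmarchesi} collides head-on with the upper bound $e \le 2n$ coming from the global-generation requirement on $\cE^\vee$.

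An alternative and perhaps more self-contained route avoids quoting Proposition \ref{miroroigmarchesi} directly and instead argues through the global generation of $\cE^\vee(1)$. Here I would use Proposition \ref{prop31}, which states that for the generic Steiner bundle with $c = 2$, the twist $\cE^\vee(1)$ is globally generated precisely when $e \ge 2n$ (this is exactly the threshold quoted). Combined with Example \ref{drops}, which shows concretely that factoring a generic section out of $Q \oplus Q$ produces a bundle whose twisted dual \emph{fails} to be globally generated along a line, one sees that for $e < 2n$ global generation of $\cE^\vee(1)$ genuinely breaks down. Then the only value compatible with both global generation and the rank bound is $e = 2n$, and decomposability of $Q \oplus Q$ finishes it.

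The main obstacle I anticipate is making rigorous the passage from ``generic'' to ``every'' indecomposable Steiner bundle: Proposition \ref{prop31} concerns the generic bundle, whereas the statement quantifies over all indecomposable ones. I would close this gap by leaning on the fact that $1$-uniformity is a strong structural constraint rather than a generic one, so the bound from Proposition \ref{miroroigmarchesi} applies uniformly, making the first route the cleaner and safer one. The crux is therefore simply the arithmetic collision $c + 2n - 2 \le e \le 2n$ with $c = 2$, together with the explicit identification of the boundary case as the decomposable bundle $Q \oplus Q$.
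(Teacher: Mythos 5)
Your first route is exactly the paper's proof: it invokes Proposition \ref{miroroigmarchesi} (i.e.\ Theorem 13 of \cite{miroroig-marchesi}) to force $e=2n$ from the collision $c+2n-2\le e\le cn$ with $c=2$, and then identifies the boundary case as the decomposable bundle $Q\oplus Q$. The argument is correct and essentially identical to the one in the paper; your alternative route via Proposition \ref{prop31} is unnecessary, and you rightly flag its generic-versus-all gap.
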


    \proof This is an immediate consequence of \cite[Theorem 13]{miroroig-marchesi},
    where it is proved that if $\cE$ is a $1$-uniform Steiner bundle on $\PP^n$ bundle of rank $e$
    with no trivial factor, then $c+2n-2\le e\le cn$ where $c=c_1(\cE)$. This applies in 
    our setting and for $c=2$ yields $e=2n$, in which case we know that $\cE=Q\oplus Q$
    is decomposable.\qed

\subsubsection{The Drézet case}
As a consequence of Theorem \ref{drezet-generation} and its proof, a generic Drezet bundle 
$\cE$ of rank 
$e>\frac{n^2+6n+11}{6}$ is ordinary, and $\cE^\vee(1)$ is generated by global sections,
which are given by linear syzygies between the quadrics defining $\cE$.
The maximal case is that of the homogeneous bundle $\cE_2$, of rank $e=\frac{n(n+3)}{2}$. 

\smallskip
Now consider  $\cE_P$ for $P$ a hyperplane in $S^2V^\vee$. Its orthogonal in $S^2V$ is the 
line generated by some tensor $t$ of rank $k$. Note that $\cE_P$ is a vector bundle only if 
$k>1$. (In general, $\cE_P$ is a vector bundle exactly 
when $P^\perp$ contains no rank one tensor.)

\begin{prop}\label{corank1}
 For any $k\ge 2$,  $\cE_P$ is an ordinary Dr\'ezet bundle.
 Moreover $\cE_P^\vee(1)$ is generated by global sections if and only if $k>2$.
\end{prop}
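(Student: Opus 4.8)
The plan is to prove the two assertions separately, using the cohomology sequence behind Proposition~\ref{syz} for ordinariness and the criterion of Lemma~\ref{criterion} for global generation. Throughout I write $P=t^\perp$ with $t\in S^2V$ of rank $k$, noting that $\cE_P$ is a genuine vector bundle precisely because $P^\perp=\CC t$ contains no rank-one tensor once $k\ge 2$. For ordinariness, with $c=2$ the sequence $0\to H^0(\cE_P^\vee(1))\to P\otimes V^\vee\to S^3V^\vee\to H^1(\cE_P^\vee(1))\to 0$ shows that $\cE_P$ is ordinary iff the multiplication $P\otimes V^\vee\to S^3V^\vee$ is surjective. I would dualize: its cokernel is dual to the space of cubics $C\in S^3V$ with $\ell\rfloor C\in P^\perp=\CC t$ for all $\ell\in V^\vee$. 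Then $\ell\mapsto \ell\rfloor C$ has image in the line $\CC t$, so $\ell\rfloor C=\ell(v_0)\,t$ for some $v_0\in V$; normalizing $v_0=e_0$ forces $C\in\CC e_0^3$ and hence $t\in\CC e_0^2$, of rank one, which is excluded by $k\ge 2$. So the cokernel vanishes and $\cE_P$ is ordinary.

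For global generation, Lemma~\ref{criterion} says $\cE_P^\vee(1)$ fails to be generated at $[v]$ exactly when there is $T\in S^3V\smallsetminus\CC v^3$ with $p\rfloor T\in\CC v$ for every $p\in P$. Projecting modulo $\CC v$, the map $p\mapsto \overline{p\rfloor T}$ from $S^2V^\vee$ to $V/\CC v$ vanishes on the hyperplane $P=t^\perp$, hence has rank $\le 1$, so there is $w\in V$ with $p\rfloor T\equiv \langle p,t\rangle\,w\pmod{\CC v}$ for all $p$. If $\overline w=0$ then $p\rfloor T\in\CC v$ for all $p\in S^2V^\vee$, whence $\langle T,p\ell\rangle=\langle p\rfloor T,\ell\rangle=0$ for every $\ell\in v^\perp$; since $S^2V^\vee\cdot v^\perp$ is the whole hyperplane $\{C(v)=0\}$, this gives $T\in\CC v^3$. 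So a genuine failure requires $\overline w\ne 0$.

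The crux is to exploit the symmetry of $T$. Writing $\mathbf T(\ell_1,\ell_2,\ell_3)=\langle T,\ell_1\ell_2\ell_3\rangle$ and $\mathbf t(\ell_1,\ell_2)=\langle \ell_1\ell_2,t\rangle$, pairing the relation above with $\ell_3\in v^\perp$ yields $\mathbf T(\ell_1,\ell_2,\ell_3)=\mathbf t(\ell_1,\ell_2)\,\ell_3(w)$ for all $\ell_1,\ell_2\in V^\vee$ and $\ell_3\in v^\perp$. Taking $\ell_1,\ell_3\in v^\perp$ and using symmetry of $\mathbf T$ to swap $\ell_1$ and $\ell_3$ gives $\mathbf t(\ell_1,\ell_2)\ell_3(w)=\mathbf t(\ell_3,\ell_2)\ell_1(w)$ for all $\ell_2$, i.e.\ $\ell_3(w)\,t(\ell_1)=\ell_1(w)\,t(\ell_3)$ in $V$ for all $\ell_1,\ell_3\in v^\perp$. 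Since $\overline w\ne 0$ there is $\ell_1\in v^\perp$ with $\ell_1(w)\ne 0$, so $t(v^\perp)\subseteq\CC\,t(\ell_1)$ is at most a line. But $\ker t$ has dimension $n+1-k$, so $\dim t(v^\perp)\ge k-1$; hence $k\le 2$. Thus for $k\ge 3$ no such $[v]$ and $T$ exist and $\cE_P^\vee(1)$ is globally generated.

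It remains to produce an explicit failure when $k=2$, and this is the step I expect to be most delicate. Here $\tim(t)$ is a plane, and I would pick $v\in\tim(t)$ so that $\ker t\subseteq v^\perp$, making $\dim t(v^\perp)=1$, exactly as the obstruction allows. Normalizing $t=e_1^2+e_2^2$, $v=e_1$, one checks $w=e_2$ satisfies $\ell_3(w)t(\ell_1)=\ell_1(w)t(\ell_3)$ on $v^\perp$, and the relation $\mathbf T(\ell_1,\ell_2,\ell_3)=\mathbf t(\ell_1,\ell_2)\ell_3(e_2)$ (for $\ell_3\in v^\perp$) pins down every coordinate $T_{ijk}$ with at least one index $\ne 1$: one gets $T_{112}\ne 0$, $T_{222}\ne 0$ and all others zero, while $T_{111}$ stays free. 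The resulting $T$ is supported on $e_1^3,\,e_1^2e_2,\,e_2^3$ with nonzero $e_1^2e_2$- and $e_2^3$-coefficients, so $T\notin\CC v^3$; multilinearity guarantees $p\rfloor T\in\CC v$ for all $p\in P$, and Lemma~\ref{criterion} then shows global generation fails at $[e_1]$. The main obstacle is thus the symmetry argument giving the clean line condition $t(v^\perp)\subseteq\CC\,t(\ell_1)$ and the verification that, for $k=2$, the constraints on $\mathbf T$ are consistent and genuinely leave a solution outside $\CC v^3$; the rank estimate and the ordinariness computation are routine.
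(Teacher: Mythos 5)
Your proof is correct, and for the main implication it takes a genuinely different route from the paper. The paper normalizes $t=e_0e_1+e_2^2+\cdots+e_{k-1}^2$, writes out an explicit basis of $P$, and then uses the fact that the non-generated locus is closed and invariant under the (large) stabilizer of $t$ to reduce the whole question to a monomial-by-monomial computation at the single point $[e_0]$ of the closed orbit $Q_t$; the dichotomy $k=2$ versus $k>2$ then comes down to whether the one extra quadric $x_0x_1-\frac12x_2^2$ is available to kill the surviving term $e_0e_1^2$. You instead work coordinate-freely: from the failure of generation at $[v]$ you extract the identity $p\rfloor T\equiv\langle p,t\rangle w \pmod{\CC v}$, and the symmetry of $T$ forces $\ell_3(w)\,(\ell_1\rfloor t)=\ell_1(w)\,(\ell_3\rfloor t)$ on $v^\perp$, hence $\dim t(v^\perp)\le 1$ and $k\le 2$ by the rank bound $\dim t(v^\perp)\ge k-1$. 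This buys a structural explanation of why rank two is the exact threshold (the obstruction is literally a rank condition on $t$), avoids both the normal form and the orbit argument, and moreover explicitly establishes ordinariness (via the dual description of the cokernel of $P\otimes V^\vee\to S^3V^\vee$), a point the paper's proof uses implicitly through Lemma \ref{criterion} but does not verify. The paper's computation, on the other hand, is shorter once the stabilizer reduction is accepted and produces the counterexample $C=e_0e_1^2$ for $k=2$ with no extra work. Your $k=2$ verification is also fine: with $t=e_1^2+e_2^2$ and $v=e_1$ the constraints are consistent and yield $T=3e_1^2e_2+e_2^3$ (up to the normalization of the pairing, the coefficients must satisfy one linear relation coming from $x_1^2-x_2^2\in P$, which you implicitly absorb into ``pins down every coordinate''), which lies outside $\CC v^3$ as required.
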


\proof Let us choose a basis of $V$ such that $t=e_0e_1+e_2^2+\cdots +e_{k-1}^2$. Its 
orthogonal $P$ is generated by the quadratic 
forms $x_0^2, x_1^2, x_0x_1-\frac{1}{2}x_2^2, x_2^2-x_3^2, \ldots , 
x_{k-2}^2-x_{k-1}^2, x_k^2,\ldots, x_n^2$,and the $x_ix_j$ for $i<j$ and $(i,j)\ne (0,1).$

In order to check whether $\cE_P^\vee(1)$ is generated by global sections  we can use
the fact that $t$ has a big stabilizer, whose orbits in $\PP(V)$ are: the support 
$\PP(S_t)=\PP(\langle e_0,\ldots , e_{k-1}\rangle)$ of $t$, the quadric $Q_t\subset \PP(S_t)$ 
defined by $t$, and the complement of $\PP(S_t)$. Since the locus where $\cE_P^\vee(1)$ is generated by global sections is closed and preserved by the stabilizer of $t$, we just need
to check whether  $\cE_P^\vee(1)$ is generated or not at one given point of $Q_t$, say $[e_0]$. 

So we consider a cubic tensor $C\in S^3V$, such that $Q\rfloor C$ is a multiple of $e_0$ 
for any $Q\in P$, and we must decide whether $C$ must be a multiple of $e_0^3$. 
So we may suppose that $C=e_0^2\lambda+e_0\kappa+\gamma$, where $\lambda, \kappa, \gamma$
do not depend on $e_0$. That $x_0^2\rfloor C$ is a multiple of $e_0$ implies $\lambda=0$. 
 That $x_ix_j\rfloor C$ is a multiple of $e_0$ for any $0<i<j$ implies that $\gamma_{ijk}=0$
 for any triple $ijk$ with a least two distinct indices; so we can write $\gamma=\sum_{i>0}
 \gamma_ie_i^3$, and then the fact  that  $Q\rfloor \gamma$ must be zero for $Q=x_1^2, 
x_2^2-x_3^2, \ldots , x_{k-2}^2-x_{k-1}^2, x_k^2,\ldots, x_n^2$ implies that $\gamma=0$. 
So we remain with $C=e_0\kappa$, and the condition that $x_0x_i\rfloor C$ is a multiple 
of $e_0$ for any $i>1$ implies that $x_i\rfloor \kappa=0$. Since this is true for any $i>1$ 
we only remain with $\kappa=\kappa_1e_1^2$, hence $C=\kappa_1e_0e_1^2$. 

If $k>2$, we can use the extra condition that $(x_0x_1-\frac{1}{2}x_2^2)\rfloor C$ is a multiple of $e_0$ to conclude that $\kappa_1=0$, and therefore that $\cE_P^\vee(1)$ is generated at $[e_0]$.
But if $k=2$, this extra condition is not available and we arrive at the opposite conclusion: the cubic $C=e_0e_1^2$ shows  that $\cE_P^\vee(1)$ is not generated at $[e_0]$.
\qed 

\medskip We could use the classification of pencils and webs of conics to get 
inequivalent families of matrices of constant rank $e=\frac{n(n+3)}{2}-2$ or $3$. 

For a generic pencil $\langle t_1, t_2\rangle $ in $S^2V$ the two tensors $t_1, t_2$ are simultaneously diagonalizable: there exists a basis $e_0,\ldots , e_n$ and coefficients 
$\lambda_0,\ldots ,\mu_n$ such that 
$$t_1=\lambda_0e_0^2+\cdots +\lambda_ne_n^2, \qquad t_2=\mu_0e_0^2+\cdots +\mu_ne_n^2.$$
Some explicit computations yield the following conclusion:

\begin{prop}\label{corank2}
 For $P=\langle t_1, t_2\rangle^\perp\subset S^2V^\vee$, the vector bundle $\cE_P$ is an ordinary Dr\'ezet bundle and $\cE_P^\vee(1)$ is generated by global sections if and only if 
 the pencil $\langle t_1, t_2\rangle$ contains no tensor of rank two.
\end{prop}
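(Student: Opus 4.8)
The plan is to prove the two implications separately, keeping the vector bundle hypothesis (equivalently, that $P^\perp=\langle t_1,t_2\rangle$ contains no rank one tensor, as recalled before Proposition \ref{corank1}) distinct from the rank two condition. A pleasant feature of the argument I have in mind is that it is entirely intrinsic: it avoids reducing the pencil to the simultaneously diagonalizable normal form and instead runs through apolarity, so it should cover all pencils $\langle t_1,t_2\rangle$ at once.

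For the ``only if'' implication I would argue by reduction to the corank one case already treated. Suppose $\langle t_1,t_2\rangle$ contains a rank two tensor $t$. Then $P\subset t^\perp$, and $t^\perp$ is a hyperplane whose orthogonal is spanned by a single rank two tensor. By Proposition \ref{corank1} applied with $k=2$, the Drézet bundle $\cE_{t^\perp}$ is ordinary but $\cE_{t^\perp}^\vee(1)$ is \emph{not} globally generated. Now Corollary \ref{inheritance} says that the property ``ordinary with globally generated twisted dual'' passes from a subspace to every larger subspace; its contrapositive transmits the \emph{failure} downward. Taking the smaller space to be $P$ and the larger one to be $t^\perp$, the failure of $\cE_{t^\perp}$ forces the failure of $\cE_P$. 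Hence a rank two tensor in the pencil obstructs the desired property.

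For the ``if'' implication, assume the pencil contains no tensor of rank $\le 2$ (rank one being ruled out by the vector bundle hypothesis). I would first establish ordinariness. Dualizing, twisting and taking cohomology in the defining sequence, $H^1(\cE_P^\vee(1))=0$ is equivalent to surjectivity of the multiplication $P\otimes V^\vee\to S^3V^\vee$, whose cokernel is dual to the space of cubics $C\in S^3V$ all of whose first partial derivatives lie in $\langle t_1,t_2\rangle$. The key classical input is that the dimension of the span of the first partials of $C$ equals the number of essential variables of $C$. Thus a nonzero such $C$ lies in $S^3W$ for some $W\subset V$ with $\dim W\le 2$, and its partials span a pencil of quadrics in at most two variables; since every binary quadric has rank $\le 2$, this pencil would drop a rank $\le 2$ tensor into $\langle t_1,t_2\rangle$, and the vector bundle hypothesis upgrades it to rank exactly two, contradicting our assumption. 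So the cokernel vanishes and $\cE_P$ is ordinary.

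Being ordinary, I can invoke Lemma \ref{criterion}: $\cE_P^\vee(1)$ is globally generated at $[v]$ exactly when the only $T\in S^3V$ with $Q\rfloor T\in\CC v$ for all $Q\in P$ are multiples of $v^3$; dualizing the pairing, the condition on $T$ reads $T\rfloor\ell\in\langle t_1,t_2\rangle$ for all $\ell\in v^\perp$. Restricting the partial map $\ell\mapsto T\rfloor\ell$ to the hyperplane $v^\perp$ drops its rank by at most one, so an offending $T$ has at most three essential variables, i.e. $T\in S^3W$ with $\dim W\le 3$, and again $\langle t_1,t_2\rangle$ captures a pencil of quadrics in $\le 3$ variables. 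The crux, and the main obstacle, is exactly this geometric step: a pencil of quadratic forms in $d\le 3$ variables always contains a form of rank $\le 2$ — for $d=3$ because the degenerate locus is the determinantal cubic and every line in $\PP(S^2W)$ meets it, and for $d\le 2$ trivially. The vector bundle hypothesis then forces that member to have rank exactly two, the contradiction we want, except in the borderline subcase where $\ell\mapsto T\rfloor\ell$ vanishes identically on $v^\perp$, which pins $T$ down to $\CC v^3$. This yields global generation at every $[v]$ and, together with ordinariness, completes the proof.
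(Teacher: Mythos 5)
Your proof is correct, but it takes a genuinely different route from the paper's. The paper gives essentially no written argument here: it reduces a \emph{generic} pencil to the simultaneously diagonalized form $t_1=\sum_i\lambda_ie_i^2$, $t_2=\sum_i\mu_ie_i^2$ and asserts that ``some explicit computations'' in that basis yield the conclusion. You replace the coordinate computation by intrinsic arguments: for ``only if'', the reduction to the corank-one case via Proposition \ref{corank1} with $k=2$ combined with the contrapositive of Corollary \ref{inheritance} (a clean deduction the paper leaves implicit); for ``if'', the classical fact that the number of essential variables of a form equals the rank of its first catalecticant, together with the observation that a pencil of quadrics in at most three variables meets the discriminant cubic and hence contains a member of rank $\le 2$. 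What your approach buys is generality and transparency — all pencils are treated at once, not only the simultaneously diagonalizable ones, and it explains conceptually why rank two is the exact threshold; what the paper's normal form buys is the explicit syzygy matrices exploited in the example that follows. One point to tighten: in the global generation step, the inference from ``$T$ has at most three essential variables'' to ``$\langle t_1,t_2\rangle$ is a pencil of quadrics in $\le 3$ variables'' is only literally valid when the image of $v^\perp$ under $\ell\mapsto \ell\rfloor T$ is two-dimensional, so that it coincides with the whole pencil; when that image is one-dimensional, $T$ has at most two essential variables and the single nonzero quadric in the image already has rank $\le 2$, giving the contradiction even more directly. Your ``$d\le 2$ trivially'' clause does cover this, but the three-way case split on the dimension of that image ($0$, $1$ or $2$) should be made explicit rather than folded into the phrase ``captures a pencil''.
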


%\medskip\noindent {\it Example 9.} 
\begin{example}
Suppose $n=3$ and consider the pencil generated by 
$$t_1=e_0^2+Te_2^2-e_3^2, \qquad t_2=e_1^2-e_2^2+e_3^2.$$
The other two rank three tensors in the pencil are $t_1+t_2$ and $t_1+Tt_2$, which with $t_1$
and $t_2$ gives four points on the pencil with cross-ratio $T$. In particular, we will 
get inequivalent matrices of linear forms by varying $T$. 

In order to get these matrices, as always we have to choose basis of 
$P=\langle t_1, t_2\rangle^\perp$ and $Syz_1(P)$, whose dimensions are $8$ and $12$, respectively. For a basis of $P$ we choose the six quadrics $Q_{ij}=x_ix_j$, plus 
$Q_0=x_1^2+Tx_3^2-x_4^2$ and $Q_1=x_2^2-x_3^2+x_4^2$. The syzygies are:
$$x_1Q_{23}=x_2Q_{13}=x_3Q_{12}, \qquad x_1Q_{24}=x_2Q_{14}=x_4Q_{12},$$
$$x_1Q_{34}=x_3Q_{14}=x_4Q_{13}, \qquad x_2Q_{34}=x_3Q_{24}=x_4Q_{23},$$
$$x_1Q_1-x_2Q_{12}+x_3Q_{13}-x_4Q_{14}=0,$$
$$x_2Q_0-x_1Q_{12}-Tx_3Q_{23}+x_4Q_{24}=0,$$
$$x_3(Q_0+TQ_1)-x_1Q_{13}-Tx_2Q_{23}+(1-T)x_4Q_{34}=0,$$
$$x_4(Q_0+Q_1)-x_1Q_{14}-x_2Q_{24}+(1-T)x_3Q_{34}=0.$$
We interprete each of the twelve $=$ signs in these formulas as one of our twelve
basis of syzygies. Then we immediately read the associated $8\times 12$ matrix of linear forms
of constant rank $7$, depnding on the parameter $T\ne 0,1$: 

$$\begin{pmatrix}
    x_3&-x_2&0&0&0&0&0&0 \\
    0&x_2&-x_1&0&0&0&0&0 \\
    x_4&0&-x_2&0&0&0&0&0 \\
    0&0&x_2&0&-x_1&0&0&0 \\
    0&x_4&-x_3&0&0&0&0&0 \\
    0&0&x_3&0&0&-x_1&0&0 \\
    0&0&0&x_4&-x_3&0&0&0 \\
    0&0&0&0&x_3&-x_2&0&0 \\
    -x_2&x_3&-x_4&0&0&0&0&x_1 \\
    -x_1&0&0&-Tx_3&x_4&0&x_2&0 \\
    0&-x_1&0&-Tx_2&0&(1-T)x_4&x_3&Tx_3 \\
    0&0&-x_1&0&-x_2&(1-T)x_3&x_4&x_4 
\end{pmatrix}$$

\end{example}

\medskip

\subsection{Matrices of constant rank four}
We have now enough information
to classify matrices of linear forms of constant rank four. 
(The question of classifying matrices of linear forms of rank everywhere 
bounded by four is raised in \cite{MR954659}).
Let $\cE$ be the rank four vector bundle on $\PP^n$ defined by such a matrix. We may suppose 
that $\cE$ is indecomposable. Moreover since $c_1(\cE)+c_1(\cE^\vee(1))=4$, up to exchanging
the two vector bundles, that is, up to transposing the matrix, we may suppose that $c_1(\cE)\le 2$. 
If $c_1(\cE)=1$, then $\cE$ must be the quotient bundle $Q$ on $\PP^4$. 

So suppose $c_1(\cE)=2$. If the rank $e=4\le n+1$, we know that $\cE$ is a direct sum of 
homogeneous bundles, in particular it must be decomposable. So we must have $n=2$.  
According to Propositions \ref{classification-c=2} and \ref{steiner-c=2}, $\cE$ must then be a Drézet bundle $\cE_P$, 
for $P$ a hyperplane in $S^2V^\vee$ (where $\PP^2=\PP(V)$). By Proposition \ref{corank1}, the orthogonal of $P$ in $S^2V$ must be spanned by a non-degenerate tensor. We deduce:

\begin{prop}\label{rankfour}
Any indecomposable matrix of linear forms of constant rank four can be obtained from one of 
the following two matrices:
$$M_{Quot}=\begin{pmatrix}
 x_2&x_3&x_4&x_5&0&0&0&0&0&0\\    
 -x_1&0&0&0&x_3&x_4&x_5&0&0&0\\ 
 0&-x_1&0&0&-x_2&0&0&x_4&x_5&0\\ 
 0&0&-x_1&0&0&-x_2&0&-x_3&0&x_5\\ 
 0&0&0&-x_1&0&0&-x_2&0&-x_3&-x_4\\ 
\end{pmatrix},$$

$$
M_{Drez}=\begin{pmatrix}
 0&x_1&0&0&x_3\\    
 -x_1&0&x_2&x_3&0\\    
 0&-x_2&0&x_1&0\\    
 0& -x_3&-x_1&0&x_2\\    
 -x_3&0&0&-x_2&0\\    
\end{pmatrix}.$$
\end{prop}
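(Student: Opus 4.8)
The plan is to treat the bundle-theoretic classification carried out just above the statement as the substance of the argument, so that what remains is purely to exhibit the two model matrices and verify their rank. That classification says that, up to transposition, $\cE$ is either the quotient bundle $Q$ on $\PP^4$ (case $c_1=1$) or the rank four Drézet bundle $\cE_P$ on $\PP^2=\PP(V)$ with $P\subset S^2V^\vee$ a hyperplane whose orthogonal $P^\perp\subset S^2V$ is a non-degenerate conic (case $c_1=2$). I read ``can be obtained from'' in the sense made precise in the Preliminaries: the intrinsic matrix attached to $\cE$ is $m_\cE$ on $H^0(\cE)\otimes H^0(\cE^\vee(1))$, and every matrix of constant rank four arises from it by taking the submatrix cut out by general linear combinations of $a\ge a_{min}(\cE)$ rows and $b\ge b_{min}(\cE)$ columns. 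Thus the whole content reduces to computing $m_Q$ and $m_{\cE_P}$.

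For $c_1=1$ I would invoke the Example computing the pairing for $Q$: with $V=\CC^5$, $H^0(Q)=V$ and $H^0(Q^\vee(1))=\wedge^2V^\vee$, the pairing sends $e_k\otimes(x_i\wedge x_j)$ to $\delta_{ik}x_j-\delta_{jk}x_i$. Writing out this $5\times 10$ matrix in the basis $\{e_k\}$ of $V$ and the basis $\{x_i\wedge x_j\}_{i<j}$ of $\wedge^2V^\vee$ reproduces $M_{Quot}$ exactly, and its constant rank four is automatic since $Q$ has rank four with both $Q$ and $Q^\vee(1)$ globally generated.

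For $c_1=2$, since $PGL(V)$ acts transitively on smooth conics the bundle $\cE_P$ is unique up to isomorphism, so I may fix the representative $P=\langle x_1^2, x_2^2, x_1x_3, x_2x_3, x_1x_2+x_3^2\rangle$, whose orthogonal is the non-degenerate conic $e_3^2-2e_1e_2$. By Proposition \ref{syz} the columns of $m_{\cE_P}$ are the linear syzygies among a basis of $P$, and computing these five syzygies yields $M_{Drez}$. The clean way to certify constant rank is to observe that $M_{Drez}$ is skew-symmetric, reflecting the self-duality $\cE_P^\vee(1)\cong\cE_P$ of the Pfaffian bundle of Example \ref{pfaffian} with $p=2$; indeed $M_{Drez}$ is just the generic $5\times 5$ skew-symmetric matrix restricted to a general net of such matrices, the defining quadrics of $\cE_P$ being its five $4\times 4$ Pfaffians. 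Since those Pfaffians are exactly $x_1^2, x_2^2, x_1x_3, x_2x_3, x_1x_2+x_3^2$ and the equations $x_1^2=x_2^2=x_1x_2+x_3^2=0$ force $x_1=x_2=x_3=0$, they have no common zero on $\PP^2$, so the rank of this odd skew-symmetric matrix never falls below four. Alternatively one simply cites Proposition \ref{corank1}, which already gives global generation of $\cE_P^\vee(1)$ for non-degenerate $P^\perp$.

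The computations are routine, so the delicacies are organizational. One must keep in view that the reduction to $\PP^4$ (resp.\ to $n=2$) and the uniqueness of $\cE_P$ genuinely rely on the globally-generated classification together with Propositions \ref{classification-c=2}, \ref{steiner-c=2} and \ref{corank1}, so the statement repackages those results plus two explicit pairings. I expect the one step worth spelling out to be the identification of $M_{Drez}$ with a restriction of the generic skew-symmetric matrix --- equivalently, the matching of its sub-Pfaffians with the apolar quadrics of the defining conic --- since this single observation both produces the matrix and certifies its constant rank, and explains why the Drézet bundle of a smooth conic is the Pfaffian bundle on $\PP^2$.
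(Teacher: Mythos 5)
Your proposal is correct and follows essentially the same route as the paper: both reduce to the two bundles via Propositions \ref{classification-c=2}, \ref{steiner-c=2} and \ref{corank1}, read off $M_{Quot}$ from the contraction $V\otimes\wedge^2V^\vee\to V^\vee$, and then verify that the skew-symmetric $M_{Drez}$ has constant rank four and realizes the Drézet bundle of a smooth conic. The only difference is cosmetic: the paper writes $M_{Drez}$ as the net $\langle\omega_1,\omega_2,\omega_3\rangle\subset\wedge^2\CC^5$ and checks that no nonzero member satisfies $\omega\wedge\omega=0$, then identifies the conic from the unique relation $\omega_1\wedge\omega_2-\tfrac12\,\omega_3\wedge\omega_3=0$; your check that the five $4\times4$ sub-Pfaffians $x_1^2,x_2^2,x_1x_3,x_2x_3,x_1x_2+x_3^2$ have no common zero is literally the same condition (these Pfaffians are the coordinates of $\omega\wedge\omega$), and reading the conic off as $P^\perp$ is the dual of the paper's relation computation.
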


\medskip\noindent {\it Remarks}. The first case corresponds to $\cE=Q$, the quotient bundle on $\PP^4=\PP(W)$, for which $H^0(\cE)=W$ and $H^0(\cE^\vee(1))=\wedge^2W^\vee$. Obviously $\cE$ cannot be generated by a proper subspace of global sections, but $\cE^\vee(1)$ can: by general principles $4+4=8$ general sections always suffice. Rather unexpectedly, if $h$ is the hyperplane class, we can compute that  
$$s(\cE^\vee(1))=\frac{1-2h}{(1-h)^5}=1+3h+5h^2+5h^3+{\bf 0}h^4.$$
This means that $7$ general sections (but no less) will suffice to generate $\cE^\vee(1)$
at every point. Concretely, this means that we obtain a matrix of constant rank four by
picking $7$ (or more) general linear combinations of the columns of $M_{Quot}$. And we get in this way all such matrices with associated vector  bundle $Q$. Note also the following consequence of the previous discussion: if we restrict
such a matrix to a smaller space of variables, then it necessarily becomes decomposable. 

The second case corresponds to $\cE=\cE_P$, the Dr\'ezet bundle on $\PP^2=\PP(V)$ 
defined by the orthogonal $P$ to an non-degenerate tensor in $S^2V$. In this case 
$H^0(\cE)=P$ and $H^0(\cE^\vee(1))=Syz_1(P)$ are both five-dimensional, so we need 
their whole spaces of global sections to generate $\cE$ and $\cE^\vee(1))$. We represented this case by an explicit skew-symmetric matrix in three variables, but we know that any generic
such matrix would be equivalent to this one. 

 \proof We have seen that the rank four vector  bundle $\cE$ associated 
to our matrix must be either the quotient bundle $Q$ on $\PP^4$, leading to 
$M_{Quot}$, or the above Drézet bundle $\cE_P$ on $\PP^2$. So the only thing 
we have to prove  is that the skew-symmetric matrix $M_{Drez}$ has constant 
rank four and defines the same Drézet bundle. 

Seen as a space of skew-symmetric forms in $5$ variables, $M_{Drez}$ is generated by $\omega_1=e_1\wedge e_2+e_3\wedge e_4$,
$\omega_2=e_2\wedge e_3+e_4\wedge e_5$, $\omega_3=e_1\wedge e_5+e_2\wedge e_4$. 
It is easy to check that no nonzero linear combination $\omega$ of $\omega_1, 
\omega_2, \omega_3$ satisfies the equation $\omega\wedge\omega=0$, which would mean 
it has rank two. Moreover, recall that the map $\omega\mapsto \omega\wedge\omega$ is precisely
the defining morphism of the associated Steiner bundle $\cE$. A straightforward computation 
shows that the unique relation between the wedge products of the $\omega_i$'s is 
$$\omega_1\wedge \omega_2-\frac{1}{2}\omega_3\wedge \omega_3=0.$$
Since the tensor $t=u_1u_2-\frac{1}{2}u_3^2$ has rank three, we conclude that $\cE=\cE_P$,
which completes the proof. \qed

\subsection{Matrices of constant rank five}
The previous analysis can be extended to matrices of linear forms 
of constant rank five.
Let $\cE$ be the rank five vector bundle on $\PP^n$ defined by such a matrix, and suppose 
that $\cE$ is indecomposable. Now $c_1(\cE)+c_1(\cE^\vee(1))=5$, so again, 
up to exchanging the two vector bundles, we may suppose that $c_1(\cE)\le 2$. 
If $c_1(\cE)=1$, then $\cE$ must be the quotient bundle $Q$ on $\PP^5$. 

So suppose $c_1(\cE)=2$. The condition $e=5\ge n+2$, implies $n\le 3$. Moreover,
as in the previous case, 
Propositions \ref{classification-c=2} and \ref{steiner-c=2} imply that 
$\cE$ is a Drézet bundle $\cE_P$ on $\PP^n=\PP(V)$
for $P\subset S^2V^\vee$ of dimension $e+1=6$.

If $n=2$, this implies that $P=S^2V^\vee$ and  then $\cE$ is the universal Drézet 
bundle discussed in Example \ref{drezet-steiner}.

Let us show that $n=3$ is impossible for an ordinary Drézet bundle. 
First note that $s(\cE^\vee(1))=(1+3h)/(1+h)^6$ and therefore $s_3(\cE^\vee(1))\ne 0$. This implies that 
$\cE^\vee(1)$, if generated by global sections, has at least $e+n=8$ independant sections. Since 
$\chi(\cE^\vee(1))=4$ and  $h^q(\cE^\vee(1))=0$ for $q>1$, this means that 
$h^1(\cE^\vee(1))\ge 4$. Therefore, if  $\cE=\cE_P$ there must exist a
four-dimensional vector space of cubics, all apolar to $P$.

%\medskip\noindent {\it Example 11.}
\begin{example} 
The six-dimensional space of quadrics generated by 
the square-free monomials $P=\langle q_{ij}=x_ix_j, i< j\rangle$ is apolar to the four dimensional
space of Fermat type cubic tensors $C=c_0e_0^3+c_1e_1^3+c_2e_2^3+c_3e_3^3$. In this case
nevertheless, $P$ has four base points. As a consequence we do not get a constant rank matrix 
of linear forms, but we do get a $6\times 8$ matrix of linear forms of generic rank $5$. 
The eight obvious syzygies are $x_iq_{jk}=x_jq_{ik}=x_kq_{ij}$ for $i<j<k$, and we get the 
matrix 
$$\begin{pmatrix}
    0&-x_2&0&x_1&0&0 \\
    x_3&0&0&-x_1&0&0 \\
    x_4&0&-x_2&0&0&0 \\
    -x_4&0&0&0&x_1&0 \\
    0&-x_4&x_3&0&0&0 \\
    0&0&-x_3&0&0&x_1 \\
    0&0&0&0&-x_3&x_2 \\
    0&0&0&x_4&0&-x_2 
\end{pmatrix}$$
Tautologically, 
the kernel is spanned outside the four base points by $\sum_{i<j}x_ix_jq_{ij}$. 
\end{example}

\medskip Let us show this cannot happen when we do not admit base points. 

In order to prove our next statements,  we will need to recall a couple of pure algebraic results which will play an important role in the proofs of Lemmas \ref{mac} and \ref{mac2}.

\vskip 2mm
Given a standard graded Artinian $K$-algebra $A=R/I$ where $R=K[x_0,x_1,\dots,x_n]$ and $I$ is a homogeneous ideal of $R$,
we denote its Hilbert function by $HF_A:\mathbb{N} \longrightarrow \mathbb{N}$, where $HF_A(j)=dim _KA_j=dim _K[R/I]_j$. Since $A$ is Artinian, its Hilbert function is
captured in its \emph{$h$-vector} $h=(h_0,h_1,\dots ,h_d)$ where $h_i=HF_A(i)>0$ and $d$ is the last index with this property. The integer $d$ is called the \emph{socle degree of} $A$.

Given  integers $n, r\ge 1$, define the  \emph{$r$-th binomial expansion of $n$} as
\[
n=\binom{m_r}{r}+\binom{m_{r-1}}{r-1}+\cdots +\binom{m_e}{e}
\]
where $m_r>m_{r-1}>\cdots >m_e\ge e\ge 1$ 
are uniquely determined integers (see \cite[Lemma 4.2.6]{MR1251956}).
Write
\[ 
n^{<r>}=\binom{m_r+1}{r+1}+\binom{m_{r-1}+1}{r}+\cdots +\binom{m_e+1}{e+1},
%\text{ and }
\]
\[
n_{<r>}=\binom{m_r-1}{r}+\binom{m_{r-1}-1}{r-1}+\cdots +\binom{m_e-1}{e}.
\]

The numerical functions $H:\mathbb N \longrightarrow \mathbb N$ that are Hilbert functions of standard graded $K$-algebras were characterized by Macaulay  \cite{MR1251956}. Given a numerical function $H:\mathbb N \longrightarrow \mathbb N$, the following conditions are equivalent:
\begin{enumerate}
\item[(i)] there exists a standard graded $K$-algebra $A$ with $H$ as Hilbert function,
\item[(ii)] $H(0)=1$ and $H$ satisfies the so-called \textbf{Macaulay's inequalities}:
\end{enumerate}
$$
H(t+1)\le H(t)^{<t>}\quad\forall t\ge 1.
$$

Notice that condition (ii) imposes strong restrictions on the Hilbert function of a standard graded $K$-algebra, in particular it bounds its growth.

\begin{example} Let $A$ be a standard graded Artinian $K$-algebra with $h$-vector $h=(h_0,h_1,\dots ,h_d)$. If $h_3\le 3$ then $h_t\le 3$ for all $t\ge 3$.
\end{example}

\begin{prop}\label{link} Let $R/I_1$ and $R/I_2$ be two standard graded Artinian $K$-algebras with $h$-vectors $(h^1_0,h^1_1,\dots ,h^1_d)$ and  $(h^2_0,h^2_1,\dots ,h^2_c)$, respectively. Assume that $I_1$ and $I_2$ are linked by  an Artinian complete intersection $K$-algebra $R/J$ with h-vector $(1,n+1,h_2,\dots ,h_{r-2},n+1,1)$. Then
$$
h_i-h^1_i=h^2_{r-i} \  \text{ for all } i.
$$
\end{prop}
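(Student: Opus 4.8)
The plan is to work entirely inside the Artinian algebra $A=R/J$ and exploit the duality it carries. Since $R/J$ is a complete intersection it is Gorenstein, and its $h$-vector $(1,n+1,h_2,\dots,h_{r-2},n+1,1)$ shows that its socle degree is $r$; hence $A_r\cong K$ and, for every $i$, the multiplication map
\[
A_i\times A_{r-i}\longrightarrow A_r\cong K
\]
is a perfect pairing. In particular $\tdim_K A_i=\tdim_K A_{r-i}$, so $h_i=h_{r-i}$. Writing $\overline{I}_1=I_1/J$ and $\overline{I}_2=I_2/J$ for the corresponding ideals of $A$, the linkage hypothesis $I_2=J:I_1$ becomes $\overline{I}_2=(0:_A\overline{I}_1)$.

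The heart of the argument is the degreewise identity
\[
(\overline{I}_2)_i=(0:_A\overline{I}_1)_i=\bigl((\overline{I}_1)_{r-i}\bigr)^{\perp},
\]
where the orthogonal complement is taken inside $A_i$ with respect to the pairing above. The inclusion ``$\subseteq$'' is immediate, since any $a\in A_i$ annihilating $\overline{I}_1$ in particular kills $(\overline{I}_1)_{r-i}$ in $A_r$. For ``$\supseteq$'' I would use that the socle of $A$ is concentrated in top degree: given $a\in A_i$ with $a\cdot(\overline{I}_1)_{r-i}=0$, pick any $b\in(\overline{I}_1)_j$; to see $ab=0$ it suffices to check $abc=0$ for all $c\in A_{r-i-j}$ (perfectness of the pairing $A_{i+j}\times A_{r-i-j}\to A_r$), and this holds because $bc\in(\overline{I}_1)_{r-i}$ as $\overline{I}_1$ is an ideal. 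Thus $a\in(0:_A\overline{I}_1)$.

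Taking dimensions gives $\tdim_K(\overline{I}_2)_i=h_i-\tdim_K(\overline{I}_1)_{r-i}$. Converting back to the quotients via $R/I_k=A/\overline{I}_k$, one has $\tdim_K(\overline{I}_1)_{r-i}=h_{r-i}-h^1_{r-i}$ and $h^2_i=\tdim_K(R/I_2)_i=h_i-\tdim_K(\overline{I}_2)_i$, whence
\[
h^2_i=\tdim_K(\overline{I}_1)_{r-i}=h_{r-i}-h^1_{r-i}.
\]
Replacing $i$ by $r-i$ yields exactly $h_i-h^1_i=h^2_{r-i}$, as claimed. (Reflexivity of the linkage, $\overline{I}_1=(0:_A\overline{I}_2)$, which holds automatically since Artinian quotients are Cohen--Macaulay, is consistent with this but not actually needed for the count.)

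The main obstacle I anticipate is the careful verification of the degreewise duality $(0:_A\overline{I}_1)_i=\bigl((\overline{I}_1)_{r-i}\bigr)^{\perp}$: this is precisely where the Gorenstein nature of $R/J$ (socle concentrated in the single degree $r$) is essential, and one must genuinely use nondegeneracy of the pairing rather than the mere numerical symmetry $h_i=h_{r-i}$ of the $h$-vector. Everything else is routine bookkeeping with Hilbert functions.
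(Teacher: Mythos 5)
Your proof is correct. The degreewise duality $(0:_A\overline{I}_1)_i=\bigl((\overline{I}_1)_{r-i}\bigr)^{\perp}$ is established properly: the inclusion $\supseteq$ really does need the nondegeneracy of the pairing $A_{i+j}\times A_{r-i-j}\to A_r$ together with the fact that $\overline{I}_1$ is an ideal, and you supply exactly that; the dimension bookkeeping that follows is routine and accurate.

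Your route differs from the paper's in an instructive way. The paper does not argue inside $A=R/J$ at all: it invokes the standard exact sequence of linkage theory
$$0 \to J \to I_1 \to K_2(-r) \to 0,$$
where $K_2$ is the canonical module of $R/I_2$ (citing \cite[Proposition 2.1.1]{MR2375719}), and then reads off $h_i-h^1_i=\dim[K_2]_{i-r}=\dim[\Hom(R/I_2,K)]_{i-r}=\dim[R/I_2]_{r-i}=h^2_{r-i}$ from the identification of the canonical module of an Artinian algebra with its graded $K$-dual. Your argument is essentially a self-contained, elementary proof of that same duality in the graded Artinian case: the isomorphism $I_1/J\cong K_2(-r)$ is precisely the statement that $(\overline{I}_1)_{r-i}$ pairs perfectly with $A_i/(\overline{I}_2)_i=(R/I_2)_i$, which is what your orthogonality computation shows. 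What the paper's approach buys is brevity and generality (the cited sequence holds for linkage of Cohen--Macaulay ideals without the Artinian hypothesis); what yours buys is that the only external input is Gorenstein duality for an Artinian complete intersection, with no appeal to canonical modules or to the linkage literature. Either is acceptable here.
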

\proof
Let us denote by $K_2$ the canonical module of $R/I_2$. Since $I_1$ and $I_2$ are linked via $J$, by definition of linkage we have $I_2=[J:I_1]$. Moreover there is
an exact sequence \cite[Proposition 2.1.1]{MR2375719}:
$$
0 \to J \to I_1 \to K_2(-r) \to 0.
$$
Therefore we get, by definition of the canonical module,  
$$\begin{array}{rcl}
h_i-h^1_i & = & dim [K_2(-r)]_i \\
& = & dim [K_2]_{-r+i} \\
& = & dim[Hom(R/I_2,K)]_{-r+i} \\
& = & dim [R/I_2]_{r-i} \\
& = & h^2_{r-i}
\end{array}
$$ which proves the claim. \qed

\begin{lemma}\label{mac} For any six-dimensional subspace $P\subset S^2V^\vee$, consisting of quadrics
with no non-trivial common zero, the rank five 
Drézet bundle on $\PP V=\PP^3$ verifies $h^1(\cE^\vee(1))\le 3$. As a consequence 
$\cE^\vee(1)$  cannot be generated by global sections. 
\end{lemma}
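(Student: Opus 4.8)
The plan is to translate the cohomological quantity $h^1(\cE_P^\vee(1))$ into Hilbert-function data of the Artinian algebra cut out by $P$, and then bound that data by a liaison argument, using Proposition \ref{link} together with the elementary fact that a standard graded algebra with nonzero degree-two part must have nonzero degree-one part.

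First I would set $R=K[x_0,x_1,x_2,x_3]$, let $I=(P)$ be the ideal generated by the six quadrics, and put $A=R/I$. Since $P$ has no common zero in $\PP^3$, the algebra $A$ is Artinian. From the exact sequence $0\ra H^0(\cE_P^\vee(1))\ra P\otimes V^\vee\ra S^3V^\vee\ra H^1(\cE_P^\vee(1))\ra 0$ recalled earlier in the Drézet section, one has $H^1(\cE_P^\vee(1))=S^3V^\vee/(P\cdot V^\vee)=A_3$, so the target inequality is exactly $\dim A_3\le 3$. The low-degree entries of the $h$-vector $(h^1_i)$ of $A$ are forced: $h^1_0=1$, $h^1_1=4$ (there are no linear forms in $I$), and $h^1_2=\dim S^2V^\vee-\dim P=10-6=4$.

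Next I would run liaison. Because $P$ is base-point-free, four general members $f_1,\dots,f_4\in P$ cut out the empty locus in $\PP^3$, hence form a regular sequence; the complete intersection $J=(f_1,\dots,f_4)\subset I$ has $h$-vector $(1,4,6,4,1)$ and socle degree $r=4$, exactly the symmetric shape $(1,n+1,h_2,\dots,n+1,1)$ required by Proposition \ref{link}. Linking $I$ through $J$, I set $I'=(J:I)$ and write its $h$-vector $(h'_i)$. Since $A$ is Cohen--Macaulay (being Artinian) and $J$ is a complete intersection contained in $I$, standard linkage applies and Proposition \ref{link} yields $h_i-h^1_i=h'_{4-i}$ for all $i$. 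The degree-two relation gives $h'_2=6-4=2>0$, and the degree-three relation gives $h^1_3=4-h'_1$.

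Finally I would conclude. As $R/I'$ is standard graded, its nonzero degree-two part forces a nonzero degree-one part: indeed $(R/I')_2$ is a quotient of $S^2(R/I')_1$, so $2=h'_2\le\binom{h'_1+1}{2}$ already gives $h'_1\ge 2$, in particular $h'_1\ge 1$. Hence $h^1(\cE_P^\vee(1))=h^1_3=4-h'_1\le 3$. Combined with the earlier observation that global generation of $\cE_P^\vee(1)$ would force $h^1(\cE_P^\vee(1))\ge 4$ (from $s_3(\cE_P^\vee(1))\ne 0$, $\chi(\cE_P^\vee(1))=4$, and the vanishing $h^{\ge 2}(\cE_P^\vee(1))=0$), this contradiction shows that $\cE_P^\vee(1)$ is not globally generated. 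The step I expect to require the most care is the liaison input: verifying that four general quadrics of $P$ genuinely form a complete intersection (base-point-freeness plus a Bertini-type general-position argument), and checking that all hypotheses of Proposition \ref{link} hold — in particular the symmetric $h$-vector of $R/J$ and the relation $J\subset I$ with $R/I$ Cohen--Macaulay — so that the numerical identity $h_i-h^1_i=h'_{r-i}$ can be invoked verbatim.
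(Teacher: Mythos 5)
Your proof is correct and follows essentially the same route as the paper: identify $h^1(\cE_P^\vee(1))$ with $\dim(R/I)_3$, link $I$ through a complete intersection of four quadrics with $h$-vector $(1,4,6,4,1)$, and use $h'_2=2$ to force $h'_1\ge 2$, hence $h^1_3\le 2$. The only cosmetic difference is that you derive $h'_1\ge 2$ from the surjection $S^2(R/I')_1\twoheadrightarrow (R/I')_2$ where the paper cites Macaulay's inequalities, which is the same bound.
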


\proof From the exact cohomology sequence associated to the exact sequence
$$0 \longrightarrow \cE^\vee(1) \longrightarrow P \otimes \cO _{\PP^3} (1)\longrightarrow \cO _{\PP^3} (3)\longrightarrow 0 $$
we immediately deduce that $H^1(\cE^\vee(1))\cong (R/I)_3$ where $I\subset K[x,y,z,t]$ is the ideal generated by our six quadrics. Therefore, we only need to check that  
$ dim (R/I)_3\le 3$.   Inside $I$ we choose 4 quadrics $Q_1,\dots ,Q_4$ defining a complete intersection ideal $J\subset R$. Let us call $h_t$ the Hilbert function of $R/J$
and $h'_t$ the Hilbert function of $R/I$. So, we have $(h_t)=(1,4,6,4,1,0,....)$ and  $(h'_t)=(1,4,4,h'_3,h'_4,0,........)$. Let us call $I_1$ the ideal directly linked to $I$ by means of the complete intersection $J$ and $(h''_t)=(1,h''_1,h''_2,h''_3,h''_4,0,........)$ its Hilbert function. The Hilbert function of $R/I_1$ is determined by the Hilbert functions of $R/I$ and $R/J$ \cite[Theorem 3]{MR776185} (see also Proposition \ref{link}),
and must verify Macaulay's inequalities (see \cite{MR1251956}).  In particular,  
the fact that $h''_2=h_2-h'_2=6-4=2$ implies  that $h''_1=h_3-h'_3\ge 2$. Thus $h'_3< 3$, as claimed. \qed

\medskip The conclusion of this discussion is the following statement:

\begin{prop}
    Any indecomposable matrix of linear forms of constant rank five comes  from an 
    equivariant vector bundle. More precisely, the associated bundle $\cE$ must be either:
    \begin{enumerate}
        \item the quotient bundle $Q$ on $\PP^5$, or
        \item the homogeneous universal Drézet bundle on $\PP^2$, which fits into an extension $0\ra Q(-1)\ra \cE\ra S^2Q\ra 0$. 
    \end{enumerate}\end{prop}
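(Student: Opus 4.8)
The plan is to reduce everything to a case analysis on $c_1(\cE)$ and then assemble the classification statements and numerical bounds already proved in this subsection. Since transposing the matrix exchanges $\cE$ with $\cE^\vee(1)$ and $c_1(\cE)+c_1(\cE^\vee(1))=5$, I would first normalize to $c_1(\cE)\le 2$. If $c_1(\cE)=1$, then by the classification of indecomposable globally generated bundles with $c_1=1$ the only options are $\cO(1)$ and the quotient bundle; since the quotient bundle on $\PP^n$ has rank $n$, a rank five such bundle is forced to be $Q$ on $\PP^5$. This is case (1), and it is homogeneous.

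The real content is the case $c_1(\cE)=2$. Here indecomposability together with global generation of both $\cE$ and $\cE^\vee(1)$ forces $e\ge n+2$ (otherwise the uniformity result makes $\cE$ a direct sum of homogeneous bundles), so from $e=5$ I get $n\le 3$. Proposition~\ref{classification-c=2} then leaves only Steiner or Drézet bundles, and Proposition~\ref{steiner-c=2} eliminates the Steiner option, so $\cE=\cE_P$ is a Drézet bundle with $\dim P=e+1=6$. When $n=2$ one has $\dim S^2V^\vee=6$, forcing $P=S^2V^\vee$; hence $\cE$ is the homogeneous universal Drézet bundle $\cE_2$ of Example~\ref{drezet-steiner}, sitting in the extension $0\to Q(-1)\to\cE\to S^2Q\to 0$. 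This is case (2).

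It remains to exclude $n=3$, which I regard as the crux. I would count sections of $\cE^\vee(1)$. Dualizing and twisting the defining sequence gives $0\to\cE^\vee(1)\to\cO_{\PP^3}(1)^{\oplus 6}\to\cO_{\PP^3}(3)\to 0$, whence $h^q(\cE^\vee(1))=0$ for $q\ge 2$ and $\chi(\cE^\vee(1))=24-20=4$, so $h^0(\cE^\vee(1))=4+h^1(\cE^\vee(1))$. On the other hand the Segre computation $s(\cE^\vee(1))=(1+3h)/(1+h)^6$ gives $s_3(\cE^\vee(1))\ne 0$, so generating $\cE^\vee(1)$ everywhere would require at least $e+n=8$ global sections, i.e. $h^1(\cE^\vee(1))\ge 4$. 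This contradicts Lemma~\ref{mac}, which bounds $h^1(\cE^\vee(1))\le 3$ for every admissible $P$ on $\PP^3$. Thus $n=3$ cannot occur, only cases (1) and (2) survive, and both are homogeneous, which yields the \emph{equivariant} conclusion.

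The only genuinely delicate ingredient feeding this argument is the bound of Lemma~\ref{mac}: it does not come from cohomological vanishing but from identifying $h^1(\cE^\vee(1))$ with $\dim(R/I)_3$ and then controlling this dimension by liaison (Proposition~\ref{link}) together with Macaulay's growth inequalities. Everything else in my plan is bookkeeping: matching the classification statements to the rank and dimension constraints, and checking the small Euler characteristic and Segre-class numerics.
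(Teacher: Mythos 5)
Your proposal is correct and follows the paper's proof essentially step for step: the same normalization to $c_1(\cE)\le 2$ via transposition, the same reduction through Propositions~\ref{classification-c=2} and~\ref{steiner-c=2} to a rank five Dr\'ezet bundle with $\dim P=6$ on $\PP^n$ for $n\le 3$, the same identification of the $n=2$ case with the universal bundle $\cE_2$, and the same exclusion of $n=3$ by playing the Segre-class lower bound $h^1(\cE_P^\vee(1))\ge 4$ against the liaison--Macaulay upper bound $h^1(\cE_P^\vee(1))\le 3$ of Lemma~\ref{mac}. Nothing to add.
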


In the first case, $H^0(\cE)=V$ and $H^0(\cE^\vee (1))=\wedge^2V^\vee$ have dimension $6$ and 
$15$, hence a $6\times 15$ matrix of constant rank five given by the natural contraction map 
$V\otimes \wedge^2V^\vee\ra V^\vee$. Any constant rank matrix defining the same vector bundle
is then obtained by choosing at least $9$ general linear combinations of the $15$ columns. 

In the second case, that we first met in Examples \ref{steiner-drezet} and \ref{drezet-steiner}, 
we have $H^0(\cE)=S^2V$ and 
$H^0(\cE^\vee (1))=S_{21}V^\vee\subset S^2V^\vee\otimes V^\vee$, of dimensions $6$ and $8$.
The corresponding matrix of constant rank five is given by the natural contraction map 
$S^2V\otimes S_{21}V^\vee\ra V^\vee$ and appears in Example \ref{steiner-drezet}.
\begin{comment}
Choose for example the basis $e_1^2, e_2^2, e_3^2, e_1e_2, e_1e_3, e_2e_3$ of $S^2V$, 
and the basis of $S_{21}V^\vee$ given by the eight syzygies 
$$x_1^2\otimes x_2-x_1x_2\otimes x_1,\; x_2^2\otimes x_1-x_1x_2\otimes x_2,\; x_1^2\otimes x_3-x_1x_3\otimes x_1,$$
$$x_3^2\otimes x_1-x_1x_3\otimes x_3,\; x_2^2\otimes x_3-x_2x_3\otimes x_2,\; x_3^2\otimes x_2-x_2x_3\otimes x_2,$$
$$x_1x_2\otimes x_3-x_1x_3\otimes x_2,\; x_1x_3\otimes x_2-x_2x_3\otimes x_1.$$
We then get the following matrix:
$$M_{Drez}=\begin{pmatrix}
    x_2&0&0&-x_1&0&0 \\
    0&x_1&0&-x_2&0&0 \\
    x_3&0&0&0&-x_1&0 \\
    0&0&x_1&0&-x_3&0 \\
    0&x_3&0&0&0&-x_2 \\
    0&0&x_2&0&0&-x_3 \\
    0&0&0&x_3&-x_2&0 \\
    0&0&0&0&x_2&-x_1 
\end{pmatrix}$$
\end{comment}
Any constant rank matrix defining the same vector bundle
is then obtained by choosing at least $7$ general linear combinations of the $8$ columns.

\section{Classification for $c_1=3$ and matrices of constant rank six}

\subsection{Bundles with $c_1=3$} 
Globally generated vector bundles $\cE$ with $c_1(\cE)= 3$ on $\PP^n$
have been classified. Our goal is to extract from this classification the list
of indecomposable vector bundles such that $\cE^\vee(1)$ is also generated. The indecomposability 
condition imposes that $e\ge n+2$, and also $h^0(\cE^\vee)=h^0(\cE(-1))=0$. Moreover
we may suppose that $e\ge 6$, since otherwise $c_1(\cE^\vee(1))<3$ and therefore 
 $\cE^\vee(1)$ must have been classified previously.

Under these hypothesis,  \cite[Theorem (v)]{anghel-manolache} implies that if $n\ge 5$,
$\cE$  must be either a Steiner or a Drezet bundle, or given by 
an exact sequence of type 
$$0\ra \cO_{\PP^n}(-1)\oplus \cO_{\PP^n}(-2) \ra  \cO_{\PP^n}^{\oplus e+2}\ra \cE\ra 0.$$

On $\PP^4=\PP V$, a specific indecomposable and globally generated bundle is  $\cE=\wedge^2Q=\cE^\vee(1)$. In this case $H^0(\cE)=H^0(\cE^\vee(1))=\wedge^2V$ 
is ten-dimensional, and the associated $10\times 10$ matrix of constant rank $6$ 
is the matrix of the symmetric bilinear form $\wedge^2V\otimes\wedge^2V\ra\wedge^4V
\simeq V^\vee$. In the basis $e_i\wedge e_j$, $i<j$ ordered lexicographically we get the symmetric matrix 
$$\begin{pmatrix}
0&0&0&0&0&0&0&x_5&-x_4&x_3 \\
0&0&0&0&0&-x_5&x_4&0&0&-x_2 \\
0&0&0&0&x_5&0&-x_3&0&x_2&0 \\
0&0&0&0&-x_4&x_3&0&-x_2&0&0 \\
0&0&x_5&-x_4&0&0&0&0&0&x_1 \\
0&-x_5&0&x_3&0&0&0&0&-x_1&0 \\
0&x_4&-x_3&0&0&0&0&x_1&0&0 \\
x_5&0&0&-x_2&0&0&x_1&0&0&0 \\
-x_4&0&x_2&0&0&-x_1&0&0&0&0 \\
x_3&-x_2&0&0&x_1&0&0&0&0&0 
\end{pmatrix}$$
Note that $s(\wedge^2Q)=(1-h)^5/(1-2h)$, hence $s_4(\wedge^2Q)=1\ne 0$. As a consequence,
this matrix cannot be reduced to a smaller one without droping rank somewhere. 

Otherwise, by \cite[Theorem (iv)]{anghel-manolache}, $\cE$ must be an extension by a trivial vector bundle, of a rank four vector  bundle $\cF$ which must belong to one of the types 
listed in  \cite[Theorem (ii)]{anghel-manolache}. Since then $h^0(\cE(-1))=h^0(\cF(-1))$
and $h^0(\cE(-2))=h^0(\cF(-2))$, types (1-4) can be eliminated. Moreover, in type (7) 
the vector bundle $\cF=Q^\vee(1)$ has no non-trivial extension since the Bott-Borel-Weil theorem
implies that $H^1(\cF^\vee)=0$; so this type can be eliminated as well. We thus remain
with the same three types as on $\PP^n$ for $n\ge 5$.

On $\PP^3$, a full classification was given in \cite{manolache}, and as in the previous 
case, a globally generated bundle $\cE$ of rank $e>3$ must be an extension by a trivial bundle, of a rank vector three bundle $\cF$  of one of the types 
listed in  \cite[Theorem (ii)]{manolache}. For the same reasons as before we can eliminate 
cases (2-3-4-6-8) in this list, and we remain with the same three types as before (Drézet, Steiner and mixed) plus case (9), which we call the Tango case. 

Finally on $\PP^2$, the only additional possibility is to consider an extension of the 
tangent bundle $T= Q(1)$ by a trivial bundle. But since $h^1(\Omega^1)=1$, only an 
extension by a trivial line bundle can yield an indecomposable vector bundle $\cE$, in fact still
homogeneous. But a quick computation in this case shows that $h^0(\cE^\vee(1))=3$, so that
$\cE^\vee(1)$ cannot be generated by global sections. 

We have proved:

\begin{prop}
Suppose that $\cE$ is an indecomposable vector bundle of rank $e\ge 6$ on $\PP^n$, 
with $c_1(\cE)=3$. Suppose that $\cE$ and $\cE^\vee(1)$ are both globally generated.  
Then either:
\begin{enumerate}
    \item $\cE$ is a Steiner bundle, or
    \item $\cE$ is a Drézet bundle, or
    \item $\cE$ fits into a sequence $0\ra \cO_{\PP^n}(-1)\oplus \cO_{\PP^n}(-2)\ra \cO_{\PP^n}^{\oplus e+2}\ra\cE\ra 0$, 
    \item  or $n=4$ and $\cE=\wedge^2Q$.
\end{enumerate}
\end{prop}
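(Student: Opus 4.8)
The plan is to deduce the statement from the existing classifications of globally generated bundles with $c_1=3$ on $\PP^n$ and then prune the resulting lists using the standing hypotheses. First I would record the constraints these hypotheses impose. Since $\cE$ is indecomposable and $1$-uniform, it cannot have rank $\le n+1$ (such bundles are sums of the homogeneous ones in the uniformity proposition), so $e\ge n+2$; combined with the normalization $e\ge 6$ (below which $c_1(\cE^\vee(1))=e-3<3$, so the bundle falls under the earlier $c_1\le 2$ classification after transposing) this keeps the possible types finite. Moreover, because $\cE^\vee(1)$ is globally generated and has no trivial summand, dualizing yields $h^0(\cE(-1))=0$, and similarly $h^0(\cE^\vee)=0$. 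These two vanishings are the main bookkeeping tools in every case.

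Next I would split according to $n$. For $n\ge 5$ the statement is immediate: \cite[Theorem (v)]{anghel-manolache} lists the globally generated bundles with $c_1=3$, and the vanishings together with $e\ge n+2$ leave precisely the Steiner, Drézet, and mixed types (1)--(3). For $n=4$ I would invoke \cite[Theorem (iv)]{anghel-manolache}, which exhibits $\cE$ either as the distinguished bundle $\wedge^2Q$---giving case (4)---or as an extension of a trivial bundle by a rank four bundle $\cF$ taken from \cite[Theorem (ii)]{anghel-manolache}. Using $h^0(\cE(-1))=h^0(\cF(-1))$ and $h^0(\cE(-2))=h^0(\cF(-2))$ I would discard the subtypes that acquire sections after twisting down, while the remaining type $\cF=Q^\vee(1)$ is ruled out because $H^1(\cF^\vee)=H^1(Q(-1))=0$ (Bott--Borel--Weil), so any such extension splits and $\cE$ becomes decomposable; what survives is again (1)--(3).

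For $n=3$ I would use the full classification of \cite{manolache}: once more $\cE$ is an extension of a trivial bundle by a rank three bundle $\cF$ from \cite[Theorem (ii)]{manolache}, and the same cohomological elimination removes all subtypes except the three standard ones and the Tango bundle. For $n=2$ the only extra candidate is an extension of the tangent bundle $T=Q(1)$ by a trivial bundle; since $h^1(\Omega^1_{\PP^2})=1$ only an extension by a trivial line bundle stays indecomposable, and a direct count gives $h^0(\cE^\vee(1))=3$, far too few for $\cE^\vee(1)$ to be globally generated, so this candidate drops out.

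The hard part is the Tango case on $\PP^3$, which survives the purely numerical pruning above: to match the clean list (1)--(4) of the statement I must show that for the Tango bundle the hypothesis that $\cE^\vee(1)$ be globally generated actually fails. I would verify this directly, either through the syzygy/evaluation criterion of Proposition \ref{factoring} or by an explicit cohomology computation showing that $\cE^\vee(1)$ has too few sections, or sections that fail to generate it at some point. More generally, the genuinely delicate point throughout the low-dimensional cases is the upgrade from ``globally generated extension of a lower-rank bundle'' to ``indecomposable with globally generated twisted dual'': one must simultaneously control the non-splitting of the extension and the global generation of the dual, and it is this interaction---rather than the classification input itself---that does the real work.
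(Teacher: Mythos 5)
Most of your argument coincides with the paper's: the same normalizations ($e\ge n+2$ from uniformity and indecomposability, $e\ge 6$ from transposition, the vanishings $h^0(\cE^\vee)=h^0(\cE(-1))=0$), the same appeal to the classifications of Anghel--Manolache and Manolache with the same cohomological pruning for $n\ge 5$, $n=4$ (including the Bott--Borel--Weil splitting argument for the $Q^\vee(1)$ type), $n=3$ and $n=2$. Up to the Tango case your proposal is essentially the paper's proof.

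The gap is in your treatment of the Tango case on $\PP^3$, where you commit to showing that for such a bundle ``the hypothesis that $\cE^\vee(1)$ be globally generated actually fails'' and propose to verify this by a syzygy or cohomology computation. That statement is false, so the verification cannot succeed. A Tango-type bundle sits in $0\ra Q(-1)\oplus\cO_{\PP^3}(-1)\ra A_{e+4}\otimes\cO_{\PP^3}\ra\cE\ra 0$, and the correct elimination is by \emph{decomposability}, not by failure of global generation: since $h^0(Q^\vee(1))=6$ and $h^0(\cO_{\PP^3}(1))=4$, one has $h^0(\cE^\vee)>0$ as soon as $e>6$, which forces a trivial direct summand; and for $e=6$ the sequence forces $A_{10}^\vee\simeq H^0(Q^\vee(1))\oplus H^0(\cO_{\PP^3}(1))$, whence $\cE\simeq\wedge^2Q\oplus Q$. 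In that last case $\cE^\vee(1)\simeq Q\oplus Q^\vee(1)$ \emph{is} globally generated, so the property you set out to disprove actually holds; the bundle is excluded only because it is decomposable. Your closing remark shows you are aware that non-splitting of extensions is the delicate point, but for the one case where it matters you have chosen the wrong mechanism, and as written the proof of the proposition is not complete.
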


We have already discussed the last case. Let us consider the other 
types of vector bundles of rank six. 

\subsubsection{The Steiner case} 
A rank six Steiner bundle $\cE$ on $\PP^n=\PP V$, for $n\le 4$, is  
given by an exact sequence 
$$0\ra A_3\otimes \cO_{\PP^n}(-1)\ra B_9\otimes \cO_{\PP^n}\ra\cE\ra 0.$$
If $\cE$ is uniform, by Proposition \ref{miroroigmarchesi} we need $3+2n-2\le 6$, hence $n=2$. Then $\cE$ must 
be the sum of three copies of $Q$. In particular, it is not indecomposable.  

\subsubsection{The Drézet case} 
Consider a rank six Drézet bundle $\cE_P$ on $\PP^n=\PP V$, for $n\le 4$, 
given by an exact sequence 
$$0\ra \cO_{\PP^n}(-3)\ra P^\vee\otimes \cO_{\PP^n}\ra\cE_P\ra 0.$$
Here $P\subset S^3V^\vee$ is a seven-dimensional space of cubics, and we wonder whether $\cE_P^\vee(1)$ can be generated by global sections. 

\begin{lemma}\label{mac2} If $n=3,4$, $\cE_P^\vee(1)$ cannot be generated by global sections.

If $n=2$, then $\cE_P^\vee(1)$ is generated by global sections if and only if it is also a Drézet bundle, and in this case $\cE_P$ is not ordinary. \end{lemma}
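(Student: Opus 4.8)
The plan is to treat $n=2,3,4$ through a common cohomological setup and then split according to the behaviour of the Segre classes of $\cE_P^\vee(1)$. Dualizing and twisting the defining sequence of the rank six Drézet bundle (with $c=3$) gives
$$0\ra \cE_P^\vee(1)\ra P\otimes\cO_{\PP^n}(1)\ra \cO_{\PP^n}(4)\ra 0,$$
so that $H^q(\cE_P^\vee(1))=0$ for $q\ge 2$, $H^0(\cE_P^\vee(1))\isom Syz_1(P)$ by Proposition \ref{syz}, and $h^1(\cE_P^\vee(1))=\dim(R/I)_4$, where $I\subset R=\CC[x_0,\ldots,x_n]$ is the ideal generated by the seven cubics spanning $P$. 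Hence $h^0-h^1=\chi(\cE_P^\vee(1))=7(n+1)-\binom{n+4}{4}$. First I would compute the total Segre class $s(\cE_P^\vee(1))=1-3h+28h^3-126h^4+\cdots$, whose crucial feature is the \emph{gap} $s_2=0$ while $s_3,s_4\ne 0$. Since global generation makes the evaluation kernel a bundle of rank $h^0-6$ with $c_j(\cK)=s_j(\cE_P^\vee(1))$, it forces $h^0-6\ge\max\{j:s_j\ne 0\}$: generation requires $h^0\ge 6+n$ when $n=3,4$ (because $s_n\ne 0$), but only $h^0\ge 7$ when $n=2$ (because $s_2=0$). Using $\chi$ this reads $\dim(R/I)_4\ge 45,\,16,\,1$ for $n=4,3,2$ respectively.

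For $n=4$ the bound is genuine. I would pick five of the cubics forming a regular sequence, giving an Artinian complete intersection $J$ with $h$-vector the coefficients of $(1+t+t^2)^5$, so $h_{R/J,3}=30$, $h_{R/J,4}=45$ and socle degree $10$. As $h_{R/I,3}=28$, Proposition \ref{link} yields $h_{R/I_1,7}=2$ and $h_{R/I_1,6}=45-\dim(R/I)_4$ for the linked algebra, and Macaulay's inequality $h_{R/I_1,7}\le (h_{R/I_1,6})^{<6>}$ forces $45-\dim(R/I)_4\ge 2$, i.e. $\dim(R/I)_4\le 43<45$: contradiction.

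For $n=3$ the analogous linkage (with a complete intersection of four cubics) only gives $\dim(R/I)_4\le 16$, exactly the threshold, so the numerics are borderline; \emph{this equality case is the main obstacle}. I would break the tie by restricting to a general plane $H\isom\PP^2$. The defining sequence restricts to that of the Drézet bundle $\cE_{P|_H}$ attached to the still base-point-free, still seven-dimensional restricted cubics, so $\cE_P^\vee(1)|_H=\cE_{P|_H}^\vee(1)$ is again globally generated; and since $H^0(\cE_P^\vee)=\ker(P\hookrightarrow S^3V^\vee)=0$, the restriction $H^0(\cE_P^\vee(1))\hookrightarrow H^0(\cE_{P|_H}^\vee(1))$ is injective, so the latter has at least $9$ sections. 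But on $\PP^2$ base-point-freeness forces $\dim(R/I_H)_4\le 2$: one has $\dim(R/I_H)_3=3$, and $\dim(R/I_H)_4=3$ is maximal growth, so (by Gotzmann persistence, since $I_H$ is generated in degree $3$) the Hilbert function would stay equal to $3$ in all higher degrees, contradicting that $R/I_H$ is Artinian. Thus $h^0(\cE_{P|_H}^\vee(1))=\dim(R/I_H)_4+6\le 8<9$, the contradiction sought. (The case $n=4$ may alternatively be reduced to $n=3$ by the same restriction.)

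For $n=2$ the Segre bound gives generation $\Rightarrow h^0\ge 7\Rightarrow h^1=\dim(R/I)_4\ge 1$, i.e. $\cE_P$ is not ordinary, while the Gotzmann bound gives $\dim(R/I)_4\le 2$, so $h^0\in\{7,8\}$. If $\cE_P^\vee(1)$ is itself a Drézet bundle it is globally generated by definition, and as a rank six Drézet bundle it has exactly $7$ sections, whence $h^1=1$ and $\cE_P$ is not ordinary. Conversely, if $\cE_P^\vee(1)$ is globally generated and $h^0=7$, the evaluation kernel is a line bundle of degree $-3$, necessarily $\cO_{\PP^2}(-3)$, exhibiting $\cE_P^\vee(1)$ as a Drézet bundle; the case $h^0=8$ is excluded because the rank two kernel $\cK\subset\cO^{\oplus 8}$ has $c_2(\cK)=s_2=0$, so its globally generated dual splits as $\cO\oplus\cO(3)$, making $\cE_P^\vee(1)$ decomposable. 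This gives the equivalence and exhibits the surviving $P$ as the cubics apolar to a plane quartic. The one delicate point throughout is the $n=3$ borderline, where the purely numerical linkage bound fails to be strict and must be completed by the restriction and maximal-growth argument above.
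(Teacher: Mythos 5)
Your overall strategy coincides with the paper's: compute $\chi(\cE_P^\vee(1))$, use the vanishing/non-vanishing pattern of the Segre classes to get the lower bound $h^0\ge 6+\max\{j:s_j\ne 0\}$ needed for generation, and bound $h^1(\cE_P^\vee(1))=\dim(R/I)_4$ from above by linkage, Macaulay and Gotzmann. Your $n=4$ argument (linkage with a complete intersection of five cubics, giving $\dim(R/I)_4\le 43<45$) is correct and in fact sharper than the paper's stated bound $h^1\le 44$; your $n=2$ argument is essentially the paper's, except that the exclusion of $h^0=8$ is better justified by noting that the evaluation kernel $\cK$ of the full space of sections satisfies $H^0(\cK)=0$, whereas $\cK\isom\cO\oplus\cO(-3)$ would force $H^0(\cK)\ne 0$ (your claim that $\cE_P^\vee(1)$ becomes decomposable does not obviously follow; what does follow is $\cE_P^\vee(1)\isom\cO^{\oplus 7}/\cO(-3)$, whence $h^0=7$, a contradiction).

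The genuine gap is in your treatment of the borderline case $n=3$. Your tie-breaker restricts to a general plane $H$ and asserts that the restricted system of cubics is ``still seven-dimensional,'' i.e.\ that $P\cap x_H\, S^2V^\vee=0$ for general $H$. This can fail for admissible $P$: take $P=V^\vee\! \cdot q\oplus\langle c_1,c_2,c_3\rangle$ with $q$ a smooth quadric and $c_1,c_2,c_3$ general cubics. This $P$ is seven-dimensional and base-point-free (its base locus is $\{q=0\}\cap\{c_1=c_2=c_3=0\}=\emptyset$), yet $x\,q\in P\cap x\,S^2V^\vee$ for \emph{every} linear form $x$, so the restriction to every plane drops to dimension six. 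Then $\dim(R_H/I_H)_3=4$, and Macaulay plus Gotzmann only give $\dim(R_H/I_H)_4\le 4$, hence $h^0(\cE_P^\vee(1))\le h^0(\cE_{P}^\vee(1)|_H)\le 10$, which does not contradict the required $h^0\ge 9$. So your argument does not cover all $P$ in the scope of the lemma. The paper closes this case uniformly by staying on $\PP^3$ and applying Gotzmann persistence to the \emph{linked} ideal $I_1$: from $h''_5=3$ and Macaulay one gets $h''_4\ge 3$, and maximal growth $h''_5=(h''_4)^{<4>}$ would force the Hilbert function of $R/I_1$ to persist at $3$, contradicting Artinianness; hence $h''_4\ge 4$ and $\dim(R/I)_4=19-h''_4\le 15<16$ for every base-point-free $P$.
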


\proof The expected dimension of $H^0(\cE_P^\vee(1))$ is $0$ for $n=4$ and $n=3$; and $6$ for $n=2$.
For $n=4$, if $\cE_P$ is an ordinary Drézet bundle we have $h^0(\cE_P^\vee(1))=0$ while for any seven-dimensional subspace $P\subset S^3V^\vee$ consisting of cubics without common zeros we have $ h^1(\cE_P^\vee(1))\le 44$, $h^0(\cE_P^\vee(1))\le 9$ and hence we do not have enough linear syzygies.

For $n=3$, if  $\cE_P$ is an ordinary Drézet bundle we have again $H^0\cE_P^\vee(1)=0$.  Otherwise we argue as in Lemma \ref{mac}.  We denote by $I$ the ideal generated by the seven cubics and by $J$ a complete intersection Artinian ideal generated by four cubics in $I$. We use $J$ to link $I$ to $I_1$. We denote by $h_t$, $h_t'$ and $h_t''$ the Hilbert functions of $J$, $I$ and $I_1$, respectively. By Proposition \ref{link} we know that for $i\ge 0$, $h_{i}''=h_{8-i}-h_{8-i}'$. In particular, we have $h_5''=h_3-h_3'=16-13=3$. Applying Macaulay's inequalities we get $h_4''=h_4-h_4'=19-h_4'\ge 3$ and by Gotzmann persistence theorem \cite{MR480478} we have $h_4''>3$. Therefore, we get that $ h^1(\cE_P^\vee(1))=h_4'=h_4-h_4''\le 15 $ and, hence, $h^0(\cE_P^\vee (1))\le 8 $ and we do not have enough linear syzygies.

\smallskip
For $n=2$, a computation shows that $s_2(\cE_P^\vee(1))=0$. This implies that if
$\cE_P^\vee(1)$ is generated by global sections, it can be generated by $e+n-1=7$
sections, and we get an exact sequence $0\ra L\ra \cO_{\PP^2}^{\oplus 7}
\ra\cE_P^\vee(1)\ra 0$. Adjusting Chern classes yields $L=\cO_{\PP^2}(-3)$. 
As a consequence $h^1(\cE_P^\vee(1))=h^2(L)=1$, which means that $\cE_P$ is 
not ordinary. \qed

\medskip Note that we get an exact complex 
$$0\ra \cO_{\PP^n}(-3)\ra P^\vee\otimes \cO_{\PP^n}\stackrel{M}{\ra} Q\otimes \cO_{\PP^n}(1)\ra
\cO_{\PP^n}(4)\ra 0,$$
with $P^\vee=H^0(\cE_P)$ and $Q^\vee=H^0(\cE_P^\vee(1))$. According to \cite{MR0453723},
one can then identify $P$ and $Q$, and $M$ is then skew-symmetric. In other words,
the bundle $\cE_P$ must be Pfaffian. The corresponding $7\times 7$ matrix of constant 
rank $6$ is then obtained by restricting the universal skew-symmetric $7\times 7$ matrix
to a projective plane that does not meet the codimension three locus of matrices 
of rank at most four. Finally, $P$ is generated by the seven Pfaffian minors of this 
matrix. 

%\medskip\noindent {\it Example 12}.
\begin{example}
Consider the smooth plane quartic $C$ whose equation is given by the tensor 
$e_0^3e_1+e_1^3e_2+e_2^3e_0$. The seven dimensional space of cubics apolar 
to this quartic tensor is
$$P=\langle x_0^3-x_1^2x_2, x_1^3-x_2^2x_0, x_2^3-x_0^2x_1, 
x_0^2x_2, x_1^2x_0, x_2^2x_1, x_0x_1x_2\rangle .$$
Denote the seven generators of $P$ by $A_0, A_1, A_2, B_0, B_1, B_2, E$. 
They admit seven linear syzygies, and $Syz_1(P)$ is generated by 
$x_1B_0-x_0E$, $x_2B_1-x_1E$, $x_0B_2-x_2E$, $x_2B_2-x_0B_1-x_1A_2$,  
$x_0B_0-x_1B_2-x_2A_0$, $x_1B_1-x_2B_0-x_0A_1$, $x_0A_0+x_1A_1+x_2A_2$. 
Idexing the columns by $A_2, A_1, A_0, B_0, B_1, B_2, E$ these seven syzygies
yield the skew-symmetric matrix
$$\begin{pmatrix}
 0&0&0&x_1&0&0&-x_0 \\
 0&0&0&0&x_2&0&-x_1 \\
 0&0&0&0&0&x_0&-x_2 \\
 -x_1&0&0&0&-x_0&x_2&0 \\
 0&-x_2&0&x_0&0&-x_1&0 \\
 0&0&-x_0&-x_2&x_1&0&0 \\
 x_0&x_1&x_2&0&0&0&0 
\end{pmatrix}$$
\end{example}

Using a normal form for plane quartics it is in principle possible to obtain
a normal form for all $7\times 7$ matrices of constant rank $6$ of linear forms in three 
variables. Note that the dimension of $\PP(S^4V)/PGL(V)$ is $6$, and that this is 
also the dimension of $G(3,\wedge^2A_7)/PGL(A_7)$. 

We can summarize this discussion as follows.

\begin{prop} All $7\times 7$ matrices of constant rank $6$ of linear forms in three variables are given by Drézet bundles on $\PP^2$ which are not ordinary. These
Drézet bundles are in fact Pfaffian, and defined by linear systems of cubics 
which are apolar to a given quartic curve in the dual projective plane. 
\end{prop}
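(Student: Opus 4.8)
The plan is to read off the associated vector bundle directly from the matrix and then feed it into the results already established for rank six Drézet bundles on $\PP^2$. First I would write the matrix as a morphism $\psi\colon \cO_{\PP^2}^{\oplus 7}\to\cO_{\PP^2}(1)^{\oplus 7}$ of constant rank $6$. Its kernel is then a saturated line subbundle of the trivial bundle $\cO_{\PP^2}^{\oplus 7}$, hence of the form $\cO_{\PP^2}(-c)$ with $c\ge 0$, and we obtain $0\to\cO_{\PP^2}(-c)\to\cO_{\PP^2}^{\oplus 7}\to\cE\to 0$, exhibiting the image $\cE$ as a Drézet bundle of rank $6$ with $c_1(\cE)=c$. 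Dualizing and twisting the cokernel sequence $0\to\cE\to\cO_{\PP^2}(1)^{\oplus 7}\to\cO_{\PP^2}(7-c)\to 0$ shows in the same way that $\cE^\vee(1)$ is a rank six Drézet bundle with $c_1(\cE^\vee(1))=6-c$; both bundles are globally generated, being quotients of trivial bundles. Since transposing the matrix exchanges $\cE$ and $\cE^\vee(1)$, I may assume $c\le 3$.

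The next step is to force $c=3$. The bundle $\cE=\cE_P$ is defined by the span $P\subset S^cV^\vee$ of its seven defining forms, and after discarding any trivial summand (which corresponds to the forms spanning a proper subspace and makes the matrix reducible to a smaller one) I may take $\dim P=e+1=7$. This requires $\binom{c+2}{2}=\dim S^cV^\vee\ge 7$, so $c\ge 3$; combined with $c\le 3$ this yields $c=3$. Hence both $\cE_P$ and $\cE_P^\vee(1)$ are rank six Drézet bundles on $\PP(V)=\PP^2$ with $c_1=3$, defined by a seven dimensional space of cubics $P\subset S^3V^\vee$.

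Now I would invoke Lemma \ref{mac2}: for $n=2$ the twisted dual $\cE_P^\vee(1)$ is globally generated if and only if it is again a Drézet bundle, and in that case $\cE_P$ is not ordinary. Both hypotheses hold here, so $\cE_P$ is not ordinary. To extract the Pfaffian structure, splice the two defining sequences into the four term self-dual complex $0\to\cO_{\PP^2}(-3)\to H^0(\cE_P)\otimes\cO_{\PP^2}\stackrel{M}{\to} H^0(\cE_P^\vee(1))^\vee\otimes\cO_{\PP^2}(1)\to\cO_{\PP^2}(4)\to 0$, whose two outer terms are dual after the twist by $\cO_{\PP^2}(4)$. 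By the structure theorem for such resolutions \cite{MR0453723} the middle map $M$ can be chosen skew-symmetric, which identifies $H^0(\cE_P)$ with $H^0(\cE_P^\vee(1))^\vee$ and exhibits $\cE_P$ as a Pfaffian bundle; the seven generators of $P$ are then the sub-Pfaffians obtained by deleting a row and the corresponding column of the skew-symmetric $7\times 7$ matrix.

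Finally, to see the apolarity I would compute $h^1(\cE_P^\vee(1))$ precisely. From the sequence $0\to H^0(\cE_P^\vee(1))\to P\otimes V^\vee\to S^{4}V^\vee\to H^1(\cE_P^\vee(1))\to 0$ and the identification $H^0(\cE_P^\vee(1))\simeq Syz_1(P)$ of Proposition \ref{syz}, the $7\times 7$ shape forces $\dim Syz_1(P)=7$, and the Euler characteristic count $21-15=6$ then gives $h^1(\cE_P^\vee(1))=1$. Thus $P\cdot V^\vee$ is a hyperplane of $S^4V^\vee$, whose annihilator under the apolarity pairing is a single quartic $F\in S^4V$; the condition $p\rfloor F=0$ for all $p\in P$ identifies $P$ with the space of cubics apolar to $F$, of dimension $10-3=7$ when $F$ has the expected rank. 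Reading $F$ as a curve in the dual projective plane $\PP(V^\vee)$ completes the statement. The main obstacle is the skew-symmetry step: one must check that the self-duality of the complex is compatible with the identification $H^0(\cE_P)\cong H^0(\cE_P^\vee(1))^\vee$ so that the structure theorem produces a genuinely \emph{skew}-symmetric (rather than symmetric) $M$, the oddness of the size $7$ being exactly what guarantees constant corank one.
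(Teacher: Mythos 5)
Your proof is correct, and its second half---non-ordinariness via $h^1(\cE_P^\vee(1))=1$, skew-symmetry of $M$ via the structure theorem of \cite{MR0453723} applied to the self-dual complex $0\to\cO_{\PP^2}(-3)\to\cO_{\PP^2}^{\oplus 7}\to\cO_{\PP^2}(1)^{\oplus 7}\to\cO_{\PP^2}(4)\to 0$, and the apolar quartic read off from the one-dimensional cokernel of $P\otimes V^\vee\to S^4V^\vee$---is exactly the paper's argument (Lemma \ref{mac2} and the paragraph following it). Where you genuinely diverge is at the entry point: the paper arrives at ``rank six Drézet bundle on $\PP^2$ with $c_1=3$'' by running through the classification of globally generated bundles with small $c_1$ and eliminating the Steiner, mixed and Tango types, whereas you read the Drézet presentation directly off the $7\times 7$ shape: constant corank one forces the kernel of $\cO_{\PP^2}^{\oplus 7}\to\cO_{\PP^2}(1)^{\oplus 7}$ to be a line bundle $\cO_{\PP^2}(-c)$, and $\dim P=7\le\binom{c+2}{2}$ together with the transposition symmetry $c_1(\cE)+c_1(\cE^\vee(1))=6$ pins down $c=3$. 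Your route is more elementary and self-contained for this particular statement; the paper's route costs more but delivers the classification for all matrix sizes at once, which is what the surrounding section needs.

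Two small refinements. The step ``discard trivial summands so that $\dim P=7$'' is an indecomposability hypothesis and should be stated as such (it is the standing assumption of the section; without it one only gets $\cE=\cE_{P'}\oplus\cO^{\oplus k}$ with $\cE_{P'}$ of the asserted type). And for the final claim you only obtain $P\subseteq\mathrm{Ann}(F)_3$ a priori; the caveat ``when $F$ has the expected rank'' can be removed, since the self-dual resolution $0\to R(-7)\to R(-4)^{7}\to R(-3)^{7}\to R\to R/(P)\to 0$ shows $R/(P)$ is Artinian Gorenstein with $h$-vector $(1,3,6,3,1)$, so $(P)=\mathrm{Ann}(F)$ by Macaulay duality and $\dim\mathrm{Ann}(F)_3=7$ forces equality. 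The skew-symmetry worry you raise at the end is likewise settled by \cite{MR0453723}: a grade three Gorenstein ideal is generated by the $2p\times 2p$ Pfaffians of a skew-symmetric matrix of odd size $2p+1$, here $p=3$.
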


\subsubsection{The mixed  case}
Consider now a vector bundle $\cE$ on $\PP^n=\PP V$, for $n\le 4$, defined   
by an exact sequence 
$$0\ra \cO_{\PP^n}(-1)\oplus \cO_{\PP^n}(-2)\ra A^\vee\otimes 
\cO_{\PP^n}\ra\cE\ra 0,$$
with $A$ of dimension eight. Supposing that $h^0(\cE^\vee)=0$, the
dual sequence implies that $A$ embeds into $V^\vee\oplus S^2V^\vee$. 

\medskip Suppose first that $n=2$, so that $A$ is a hyperplane in 
$V^\vee\oplus S^2V^\vee$. Denote by $(a_0,q_0)\in 
V\oplus S^2V$ a generator of its orthogonal. The fiber of $\cE$ at $[v]$
can then be identified with $(V\oplus S^2V)/\langle (v,0), (0,v^2), (a_0,q_0)
\rangle$. In particular there is a natural morphism from $Q$ to $\cE$, which 
is an injective morphism of vector bundles when $q_0$ has rank at least two. 
Under this hypothesis, we get an exact sequence 
$$0\ra Q\ra \cE\ra\cE_H\ra 0,$$
where $\cE_H$ is the rank four Drézet bundle defined by $q_0$, so that $H$ 
is the orthogonal hyperplane to $q_0$ in $S^2V^\perp$. Such bundles 
were discussed in Example \ref{drezet-steiner}.

\begin{lemma}
This exact sequence  splits if and only if $a_0$ belongs to the span of $q_0$.
In particular, it  always splits when $q_0$ has rank three, but not necessarily
when $q_0$ has rank two. 
\end{lemma}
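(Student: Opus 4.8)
The plan is to apply the splitting criterion for short exact sequences of vector bundles: the sequence $0\ra Q\ra \cE\ra \cE_H\ra 0$ splits if and only if there is a retraction $r\colon \cE\ra Q$ restricting to the identity on the sub-bundle $Q$. I would compute such retractions directly from the fibrewise description, in which $\cE$ has fiber $(V\oplus S^2V)/\langle (v,0),(0,v^2),(a_0,q_0)\rangle$ at $[v]$, and $Q$ is included via $\overline w\mapsto \overline{(w,0)}$.

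First I would lift $r$ along the surjection $A^\vee\otimes\cO_{\PP^2}\ra\cE$ and then along the quotient $(V\oplus S^2V)\otimes\cO_{\PP^2}\twoheadrightarrow A^\vee\otimes\cO_{\PP^2}$, so that, using $H^0(Q)=V$ and $\Hom((V\oplus S^2V)\otimes\cO_{\PP^2},Q)=(V^\vee\oplus S^2V^\vee)\otimes V$, the map $r$ is represented by a constant pair $(M,\Phi)$ with $M\in\End(V)$ and $\Phi\in\Hom(S^2V,V)$, acting at $[v]$ by $(w,m)\mapsto \overline{Mw+\Phi(m)}$. The condition $r|_Q=\mathrm{id}$ forces $Mw\equiv w$ modulo $\CC v$ for every $v$, whence $M=\mathrm{id}_V$. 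The requirement that $(M,\Phi)$ descend to a morphism on $\cE$ then splits into exactly two conditions: that $r$ kill the sub-line-bundle $\cO_{\PP^2}(-1)\oplus\cO_{\PP^2}(-2)$, giving $\Phi(v^2)\in\CC v$ for all $v$; and that it kill the generator $(a_0,q_0)$, giving $\Phi(q_0)=-a_0$ in $V$.

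Next I would solve the first condition by polarization: writing $\Phi(v^2)=\lambda(v)\,v$, homogeneity forces $\lambda\in V^\vee$ to be linear, and polarizing gives $\Phi(vw)=\tfrac12(\lambda(v)w+\lambda(w)v)$. Thus the admissible $\Phi$ are exactly the maps $\Phi_\lambda$ parametrized by $\lambda\in V^\vee$ (this is the usual co-Euler relation, and matches the three-dimensional image of $V^\vee$ one sees in $\Ext^1(\cE_H,Q)=H^1(\cE_H^\vee\otimes Q)$). A direct computation then yields $\Phi_\lambda(q_0)=\hat q_0(\lambda)$, where $\hat q_0\colon V^\vee\ra V$ is the polarization of $q_0$ regarded as a symmetric form (the precise scalar is irrelevant). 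Hence the second condition $\Phi_\lambda(q_0)=-a_0$ is solvable in $\lambda$ if and only if $a_0$ lies in the image of $\hat q_0$, which is precisely the support — the ``span'' — of $q_0$. Since $\mathrm{rank}\,q_0=\dim\,\mathrm{im}\,\hat q_0$, this image is all of $V$ when $q_0$ has rank three and a plane when $q_0$ has rank two, giving the stated consequences.

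The main obstacle I anticipate is the bookkeeping in the second paragraph: checking that every global retraction is represented by a \emph{constant} pair $(M,\Phi)$, with no extra global sections intervening, and that the passage through the quotient $A^\vee$ and through the sub-line-bundles produces exactly conditions (a) $\Phi(v^2)\in\CC v$ and (b) $\Phi(q_0)=-a_0$. Once this is pinned down, the remaining argument is routine linear algebra. I would also state explicitly that ``$a_0$ belongs to the span of $q_0$'' is to be read as membership in the support $\mathrm{im}\,\hat q_0\subseteq V$, since $a_0$ and $q_0$ a priori live in different spaces.
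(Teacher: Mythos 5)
Your proof is correct and follows essentially the same route as the paper's: the paper likewise observes that a splitting must be induced by a linear map $\iota\colon S^2V\to V$ satisfying $\iota(v^2)\in\CC v$ for all $v$ and $\iota(q_0)=a_0$, deduces that $\iota$ is contraction by a linear form $\theta$, and concludes that solvability of $a_0=\theta\rfloor q_0$ is exactly membership of $a_0$ in the span (support) of $q_0$. Your version only makes explicit the reduction of a global retraction to a constant pair $(M,\Phi)$ with $M=\mathrm{id}_V$ and the polarization step, both of which the paper leaves implicit.
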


\proof A splitting of the exact sequence must be induced by a morphism $\iota : S^2V\ra V$ such that $\iota(v^2)$ is a multiple of $v$ for all $v$, and $\iota(q_0)=a_0$. The first condition implies that
$\iota$ is the contraction map by 
some linear form $\theta$. Then the second condition means that 
$a_0=\theta\rfloor q_0$, and the existence of such a $\theta$ precisely 
means that $a_0$ belongs to the span of $q_0$. \qed

\medskip  As a consequence, $\cE$ is decomposable unless $q_0$ has rank two and $a_0$ does not belong to its span. In this case we know by the second part of Example \ref{drezet-steiner} that $\cF^\vee(1)$ is not generated by global sections, but this does not  a priori prevent $\cE^\vee(1)$ to be generated. 
And this is exactly what happens!

\begin{prop}
Suppose $q_0$ has rank two, and $a_0$ does not belong to its span. 
Then $\cE^\vee(1)$ is generated by global sections.
\end{prop}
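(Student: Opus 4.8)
The plan is to exploit the extension $0\to Q\to\cE\to\cE_H\to 0$ produced above, where $\cE_H$ is the rank four Drézet bundle attached to the rank two tensor $q_0$. First I would dualize and twist by $\cO_{\PP^2}(1)$ to get
$$0\ra \cE_H^\vee(1)\ra \cE^\vee(1)\ra Q^\vee(1)\ra 0.$$
On $\PP^2$ the quotient bundle has rank two with $\det Q=\cO_{\PP^2}(1)$, so $Q^\vee\cong Q(-1)$ and hence $Q^\vee(1)\cong Q$ is globally generated. Since $q_0$ has rank $k=2\ge 2$, Proposition \ref{corank1} shows $\cE_H$ is ordinary, i.e. $H^1(\cE_H^\vee(1))=0$. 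Taking cohomology then gives that $H^0(\cE^\vee(1))\to H^0(Q)=V$ is surjective, so that
$$0\to H^0(\cE_H^\vee(1))\otimes\cO_{\PP^2}\to H^0(\cE^\vee(1))\otimes\cO_{\PP^2}\to V\otimes\cO_{\PP^2}\to 0$$
is exact.

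I would then feed this into the snake lemma, comparing the three evaluation maps with the sequence $0\to\cE_H^\vee(1)\to\cE^\vee(1)\to Q\to 0$. The evaluation map of $Q$ is surjective with kernel $\cO_{\PP^2}(-1)$ (Euler sequence), so the snake lemma identifies the cokernel of the evaluation map of $\cE^\vee(1)$ with the cokernel of the connecting homomorphism
$$\delta:\cO_{\PP^2}(-1)=\ker(\mathrm{ev}_Q)\lra \cC:=\operatorname{coker}(\mathrm{ev}_{\cE_H^\vee(1)}).$$
Thus $\cE^\vee(1)$ is globally generated if and only if $\delta$ is surjective; away from the non-generation locus of $\cE_H^\vee(1)$ this is automatic, so the entire question concentrates on that locus.

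Next I would identify this locus. Applying Lemma \ref{criterion} to the ordinary bundle $\cE_H$, generation of $\cE_H^\vee(1)$ fails at $[v]$ exactly when the contraction map $S^3V/\CC v^3\to (V/\CC v)^{\oplus 5}$ has nonzero kernel. A direct computation, carried out after normalizing $q_0=e_0e_1$, shows that this kernel is one dimensional along the support line $\ell=\PP(\langle e_0,e_1\rangle)$ of $q_0$ and vanishes off $\ell$: at $[e_0]$ it is spanned by $e_0e_1^2$, recovering the failure of Example \ref{drezet-steiner}, and at a general point $[e_0+e_1]$ by $e_0^2(e_0+3e_1)$ modulo $v^3$. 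Hence $\cC$ is an invertible sheaf supported on $\ell$, and it remains to prove that $\delta$ is nowhere zero on $\ell\cong\PP^1$; since the whole construction is natural in $(q_0,a_0)$, the vanishing locus of $\delta$ is invariant under the stabilizer of $(q_0,a_0)$, so it suffices to test $\delta$ at one point of each of the two orbits of that stabilizer on $\ell$.

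The main obstacle, and the point where the hypothesis $a_0\notin\langle q_0\rangle$ enters, is the explicit evaluation of $\delta$ along $\ell$. By $\GL(V)$-equivariance I would first reduce to the normal form $q_0=e_0e_1$, $a_0=e_2$, which is possible precisely because $a_0\notin\langle q_0\rangle$ (use the subgroup of the stabilizer of $q_0$ of the form $e_2\mapsto e_2+\alpha e_0+\beta e_1$ to clear the $\langle e_0,e_1\rangle$-part of $a_0$). For $[v]\in\ell$ I would then pick a section $s\in H^0(\cE^\vee(1))$ lifting $v\in V=H^0(Q)$ — concretely a linear syzygy of the combined system cut out by $A\subset V^\vee\oplus S^2V^\vee$ — and compute the class of $s([v])$ in $\cC_{[v]}=\cE_H^\vee(1)_{[v]}/\operatorname{im}(\mathrm{ev})$. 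The content of this computation is that the class is a nonzero multiple of the $e_2$-component of $a_0$: when $a_0\in\langle q_0\rangle$ the sequence splits (as in the preceding splitting lemma), $\delta\equiv 0$, and $\cE^\vee(1)=Q\oplus\cE_H^\vee(1)$ is not generated; whereas $a_0\notin\langle q_0\rangle$ forces $\delta$ to be nonzero at $[e_0]$ and at $[e_0+e_1]$, hence at every point of $\ell$, so $\delta$ is surjective and $\cE^\vee(1)$ is globally generated.
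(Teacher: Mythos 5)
Your strategy is genuinely different from the paper's, and as an outline it is viable. The paper never passes through the extension $0\ra \cE_H^\vee(1)\ra \cE^\vee(1)\ra Q\ra 0$: it first checks $h^0(\cE^\vee(1))=8$ by listing the eight linear syzygies of $A$ explicitly, and then runs the snake-lemma criterion on $\cE^\vee(1)$ itself, proving by a complete case analysis that for every $[v]$ the only pairs $(Q,C)\in S^2V\oplus S^3V$ whose contractions against all of $A$ land in $\CC v$ are $(\lambda v^2,\mu v^3)$. Your reduction --- ordinariness of $\cE_H$ (Proposition \ref{corank1}) gives $h^1(\cE^\vee(1))=0$ for free, the non-generation locus of $\cE_H^\vee(1)$ is exactly the line $\ell=\PP\langle e_0,e_1\rangle$ with one-dimensional cokernel there, so everything concentrates in a connecting map $\delta:\cO_{\PP^2}(-1)\ra\cC$ supported on $\ell$, testable at two points by equivariance --- is correct as far as it goes, and is arguably more illuminating, since it isolates exactly where the rank-four Drézet bundle fails and how the extension cures it.

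There is, however, a genuine gap: the decisive step, the non-vanishing of $\delta$ at $[e_0]$ and at $[e_0+e_1]$, is asserted rather than carried out. You state that "the class is a nonzero multiple of the $e_2$-component of $a_0$," but you neither exhibit the lifted syzygy nor evaluate it, and the justification offered in its place --- that $a_0\notin\langle q_0\rangle$ makes the extension non-split, hence $\delta$ nonzero --- does not suffice. Non-splitness is a global statement, perfectly compatible with $\delta$ vanishing at individual points of $\ell$; and since $\cC$ is a line bundle on $\ell\simeq\PP^1$, surjectivity of $\delta|_\ell:\cO_\ell(-1)\ra\cC$ requires not just $\delta\ne 0$ but that $\delta$ has no zeros (equivalently $\deg\cC=-1$), which again only the explicit computation can deliver. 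The proposition immediately following in the paper (rank one $q_0$ with $a_0$ outside its span, where generation still fails) is a warning that "$a_0$ not in the span of $q_0$" plus a non-split extension structure is not by itself enough. So what you have is a correct reduction to a finite check, but the finite check --- which is precisely where the hypothesis on $a_0$ does its work --- is the part that is missing.
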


\proof 
As above we can choose a basis such that $(a_0,q_0)=(e_0, e_1e_2)$. 
We first claim that  $h^0(\cE^\vee(1))=8$, the expected value. 
Indeed, the exact sequence 
$$0\ra H^0(\cE^\vee(1))\ra A\otimes V^\vee\ra S^2V^\vee\oplus S^3V^\vee
\ra H^1(\cE^\vee(1))\ra 0$$
allows to interprete $H^0(\cE^\vee(1))$ as a space of linear syzygies
between vectors in $A$. Choose a basis of $A$ given by the vectors 
$(-x_0,m)$,  $(x_1,0), (x_2,0)$, and $(0,n)$ for the five monomials $n$ 
distinct from $m=x_1x_2$ or $x_2^2$. Then the linear syzygies of $A$ 
are generated by the tautological linear syzygy $x_2(x_1,0)-x_1(x_2,0)$, and 
the  syzygies of  the form
$$\sum_{n\ne m} \ell_n (0,n)+\ell_m (-x_0 , m) +\ell_1(x_1,0)+\ell_2(x_2,0)=0$$
where $\sum_{n} \ell_n n$ is a non-trivial syzygy between the six degree two monomials in $x_0, x_1, x_2$; this imposes that $\ell_m$ has no term in $x_0$,
so there are seven such syzygies, which makes eight with the tautological 
one. 

As a consequence, $h^1(\cE^\vee(1))=0$, and we are allowed to use the snake 
Lemma exactly as we did for Drézet bundles in the proof of Lemma \ref{criterion}.
We conclude that $\cE^\vee(1)$ is generated at $[v]$ if and only if the natural
morphism 
$$S^2V/\langle v^2\rangle \oplus S^3V/\langle v^3\rangle
\ra Hom(A,V/\langle v\rangle)$$
is injective. If this is not the case, there exists tensors $Q\in S^2V$ and 
$C\in S^3V$, not both powers of $v$, such that for any $(\ell, \kappa)$ in 
$A$ the contraction 
 $\ell\rfloor Q+\kappa\rfloor C$ gives a vector in $\langle v\rangle$.
Let us make an explicit computation. Using our prefered basis of $A$ 
we rewrite these conditions as
$$x_0\rfloor Q=x_1x_2\rfloor C, \quad x_1\rfloor Q= x_2\rfloor Q= n\rfloor C=0
\quad mod \; v$$
for $n$ any monomial other that $x_1x_2$. In particular there must exist scalars 
$s_i$ such that $x_i^2\rfloor C=s_i v$, and we deduce that 
$$C=\frac{1}{6}\sum_i s_iv_ie_i^3+\frac{1}{2}\sum_{i\ne j} s_iv_je_i^2e_i+
te_0e_1e_2$$
for some $t$. There must also exist $r_1,r_2$ such that $x_0x_i\rfloor C=r_i v$,
which gives
$$t=r_1v_2, \quad s_0v_1=r_1v_0, \quad s_1v_0=r_1v_1,$$
$$t=r_2v_1, \quad s_0v_2=r_2v_0, \quad s_2v_0=r_2v_2.$$
This implies that $s_0v_1^2=s_1v_0^2$ and $s_0v_2^2=s_2v_0^2$. 

If $(s_0,s_1,s_2)$ is colinear to $(v_0^2, v_1^2, v_2^2)$, then $C$ reduces to $t'e_0e_1e_2$ modulo $v^3$, for some scalar $t'$. Then we need $t'e_1$ and $t'e_2$ to be both multiples of $v$, which implies that $t'=0$. Finally, we remain with the conditions that $x_i\rfloor Q$ be multiples of $v$ for each $i$, which is only possible if $Q$ is a multiple of $v^2$. 

The other possibility is that $s_0=v_0=0$, which implies that $r_1v_1=r_2v_2=0$. 
If $v_1v_2\ne 0$, we get $r_1=r_2=0$ and we deduce that $C$ has no term involving $e_0$, and that we can write
$$C=\frac{1}{6}(s_1v_1e_1^3+s_2v_2e_2^3)+
\frac{1}{2}(s_1v_2e_1^2e_2+s_2v_1e_1e_2^2).$$
We deduce that $x_0\rfloor Q=x_1x_2\rfloor C=s_1v_2e_1+s_2v_1e_2$ modulo $v$. 
Since the three contractions of $Q$ by $x_0, x_1, x_2$ do not involve
$e_0$, this must also be the case of $Q$, so that in fact $x_0\rfloor Q=0$. 
So $s_1v_2e_1+s_2v_1e_2$ must be a multiple of $v=v_1e_1+v_2e_2$, which amounts 
to $s_1v_2^2=s_2v_1^2$ and implies that $C$ is a multiple of $v^3$. Finally, 
the three contractions of $Q$ by $x_0, x_1, x_2$ must then be multiples of $v$,
and thus $Q$ is a multiple of $v^2$. This concludes the proof. \qed 

\medskip 
We immediately get the associated $8\times 8$ matrix of constant rank $6$, 
using the same basis as above for $A$ and its linear syzygies:

$$M=\begin{pmatrix}
0&x_2&-x_1&0&0&0&0&0 \\
-x_2&0&x_0&0&0&-x_1&0&0 \\
x_1&-x_0&0&0&x_2&0&0&0 \\
0&0&0&x_1&0&0&-x_0&0 \\
0&0&0&x_2&0&0&0&-x_0 \\
0&0&0&0&x_0&0&-x_1&0 \\
0&0&0&0&0&x_0&0&-x_2 \\
0&0&0&0&0&0&x_2&-x_1 
\end{pmatrix}.$$

This matrix clearly exhibits the fact that $\cE$ is an extension of the quotient 
bundle, corresponding to the $3\times 3$ NorthWest block, by a non-generic Drézet bundle corresponding to the $5\times 5$ SouthEast block. The former has constant
rank two. The latter has bounded rank four, and we know that the rank does drop
to three on the line $(x_0=0)$; but remarkably, this is cured by the two 
extra nonzero entries of the matrix, which keep the rank constant. 

\smallskip
Note also that there is  an exact complex
  $$0\ra \cO_{\PP^2}(-1)\oplus \cO_{\PP^2}(-2)\ra \cO_{\PP^2}^{\oplus 8}\stackrel{M}{\ra} \cO_{\PP^2}(1)^{\oplus 8}\ra \cO_{\PP^2}(2)\oplus \cO_{\PP^2}(3)\ra  0.$$
 
\begin{prop}
Suppose $q_0$ has rank one, and that $a_0$ does not belong to its span. 
Then $\cE^\vee(1)$ is not generated by global sections.
\end{prop}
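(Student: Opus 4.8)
The plan is to run the same snake-lemma analysis as in the preceding proposition, but this time to pin down a point where global generation genuinely fails. First I would normalise coordinates: since $q_0$ has rank one we may take $q_0=e_0^2$ after choosing $e_0$ to span its support, and since $a_0\notin\langle q_0\rangle=\CC e_0$, a further change of basis fixing $\CC e_0$ lets us assume $a_0=e_1$. Then $A=(e_1,e_0^2)^\perp\subset V^\vee\oplus S^2V^\vee$ admits the explicit basis $(x_0,0),(x_2,0),(0,x_1^2),(0,x_2^2),(0,x_0x_1),(0,x_0x_2),(0,x_1x_2)$ together with one coupled generator mixing $x_1$ in the linear slot and $x_0^2$ in the quadratic slot. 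This coupled generator is the whole source of the phenomenon, and $[e_0]$ — exactly the point at which the natural map $Q\to\cE$ degenerates in the rank-one case, since there $(w,0)$ with $w\notin\CC e_0$ lies in $\langle(e_0,0),(0,e_0^2),(e_1,e_0^2)\rangle$ — will be the point where generation breaks down.

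Next I would verify that $h^1(\cE^\vee(1))=0$, equivalently $h^0(\cE^\vee(1))=8$, which is what makes the criterion of the preceding proposition (obtained via the snake lemma as in Lemma~\ref{criterion}) applicable verbatim. This reduces to surjectivity of $\mu:A\otimes V^\vee\to S^2V^\vee\oplus S^3V^\vee$, $(\ell,\kappa)\otimes s\mapsto(\ell s,\kappa s)$. The five pure-quadratic generators $(0,x_ix_j)$ with $x_ix_j\ne x_0^2$, multiplied by $V^\vee$, hit every cubic monomial except $x_0^3$ while contributing nothing to the $S^2V^\vee$ factor; the generators $(x_0,0),(x_2,0)$ fill $S^2V^\vee$ apart from $x_1^2$; and the coupled generator, applied with $s=x_0$ and with $s=x_1$, supplies exactly $x_0^3\in S^3V^\vee$ and $x_1^2\in S^2V^\vee$ after subtracting terms already in the image. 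Hence $\mu$ is onto, $h^1=0$, and we obtain the criterion: $\cE^\vee(1)$ is generated at $[v]$ iff $S^2V/\langle v^2\rangle\oplus S^3V/\langle v^3\rangle\to\Hom(A,V/\langle v\rangle)$, $(Q,C)\mapsto\bigl[(\ell,\kappa)\mapsto \ell\rfloor Q+\kappa\rfloor C\bmod v\bigr]$, is injective.

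Finally I would apply this criterion at $[v]=[e_0]$ and exhibit a nonzero kernel element, namely the class of $(Q,C)$ with $Q=e_1^2$ and $C$ the multiple of $e_0^2e_1$ annihilated, together with $Q$, by the coupled generator. This class is nonzero in $S^2V/\langle e_0^2\rangle\oplus S^3V/\langle e_0^3\rangle$ because $e_1^2\notin\CC e_0^2$. Testing it against the eight basis vectors of $A$ gives a multiple of $e_0$ in every case: $x_0\rfloor e_1^2$ and $x_2\rfloor e_1^2$ vanish; the contractions of $e_0^2e_1$ by $x_1^2,x_2^2,x_0x_2,x_1x_2$ vanish while $x_0x_1\rfloor(e_0^2e_1)\in\CC e_0$; and on the coupled generator the contraction $x_1\rfloor e_1^2$ is cancelled by $x_0^2\rfloor C$, the scalar in $C$ being chosen precisely so that they agree modulo $e_0$. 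Thus the criterion map fails to be injective at $[e_0]$, so $\cE^\vee(1)$ is not generated by global sections there, which proves the proposition.

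The main obstacle is the $h^1=0$ step, since $S^2V^\vee$ and $S^3V^\vee$ are coupled only through the single mixed generator: one must check carefully that the lone unreachable cubic $x_0^3$ is recovered without spoiling surjectivity onto $S^2V^\vee$. Dually, it is exactly this coupling that confines the failure of generation to $[e_0]$ and fixes the kernel tensor; everything else is a routine contraction computation once the coordinates are normalised. The contrast with the rank-two situation (and with Example~\ref{drezet-steiner}) is then transparent: whether the coupled generator carries the extra quadric needed to annihilate $C=e_0^2e_1$ is precisely the difference between $q_0$ of rank two and rank one.
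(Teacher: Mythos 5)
Your proof is correct and follows essentially the same route as the paper: verify $h^0(\cE^\vee(1))=8$ so the snake-lemma criterion from the rank-two case applies, then exhibit an explicit pair $(Q,C)$ killing injectivity at the support point of $q_0$. Up to the relabeling of coordinates (the paper takes $(a_0,q_0)=(e_0,e_2^2)$ and fails at $[e_2]$ with $Q=\tfrac12 e_0^2$, $C=e_2^2e_0$), your counterexample is the very same tensor pair, and your more detailed check of the surjectivity of $A\otimes V^\vee\to S^2V^\vee\oplus S^3V^\vee$ fills in what the paper dismisses as "an explicit computation".
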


\proof An explicit computation, choosing $(a_0,q_0)=(e_0,e_2^2)$, shows that 
again $\cE^\vee(1)$ has eight sections. This allows to use the same 
criterion for global generation as in the previous Lemma. And it is 
easy to exhibit a counter-example: choose $v=e_2$, $C=e_2^2e_0$ and $Q=\frac{1}{2}e_0^2$. 
Indeed, if $n$ is a degree two monomial distinct from $x_2^2$ it is clear 
that $n\rfloor c$ ia a multiple of $e_2$, and it is even clearer that 
$x_1\rfloor Q=x_2\rfloor Q=0$. Finally, $(x_0,-\frac{1}{2}x_2^2)$ is in $A$, 
and applying it to $(Q,C)$ we get $x_0\rfloor Q-\frac{1}{2}x_2^2\rfloor C=0$. \qed

\begin{comment}
\red{I only have results based on Macaulay2!!!!!}
\textcolor{blue}{What kind of results? Can this case happen for $n=2$?} \red{I think that for $n=2$ we have a $8\times 8$ matrix of rank 6. Do you agree?}

For $n=2$, $\cE^\vee (1)$ has at least 8 global sections. This implies that if it is generated by global sections )\red{I have checked examples with Macaulay2 and it is globally generatd but how I can confirm it?} we get an exact sequence $0\ra F\ra \cO_{\PP^2}^8\ra \cE^\vee(1)\ra 0$ where $F$ is a rank 2 vector bundle on $\PP^2$
 which splits as a sum of two line bundles. Adjusting the Chern classes we get that $F=\cO_{\PP^2}(-1)\oplus \cO_{\PP^2}(-2)$. Therefore, we have an exact complex
  $$0\ra \cO_{\PP^2}(-1)\oplus \cO_{\PP^2}(-2)\ra \cO_{\PP^2}^8\stackrel{M}{\ra} \cO_{\PP^2}(1)^8\ra \cO_{\PP^2}(2)\oplus \cO_{\PP^2}(3) 0$$
 where $M$ is an $8\times 8$ matrix with constant rank 6.
\end{comment}
 
 \medskip
 Now that we know precisely what can happen on $\PP^2$, it is easy to upgrade 
 to $\PP^3$ and $\PP^4$. So again suppose that the vector  bundle $\cE$ is defined 
by an exact sequence 
$$0\ra \cO_{\PP^3}(-1)\oplus \cO_{\PP^3}(-2)\ra A^\vee\otimes 
\cO_{\PP^3}\ra\cE\ra 0,$$
with $A$ of dimension eight. Consider a hyperplane $H\subset \PP^3$ and denote 
by $\cF$ the restriction of $\cE$ to $H$; it is also defined by 
the previous sequence, restricted to $H$. Moreover if $\cE^\vee(1)$ is generated 
by global sections, then $\cF^\vee(1)$ has the same property, so by the previous discussions we know that either $\cF$ splits, or is a non-trivial extension of 
a Drézet bundle by the quotient bundle. Moreover we have seen that $h^0(\cF^\vee(1))=8$ in both cases. But the exact sequence $0\ra \cE^\vee\ra 
\cE^\vee(1)\ra \cF^\vee(1)\ra 0$ implies that if $\cE$ is indecomposable, 
$h^0(\cE^\vee(1))\le h^0( \cF^\vee(1))=8.$ Since $8<6+3$ and $s_3(\cE^\vee(1))=12
\ne 0$, we do not have enough sections for $\cE^\vee(1)$ to be globally 
generated.  A fortiori we cannot have such bundles on $\PP^4$, and we have proved:

 \begin{prop}
 On $\PP^3$ and $\PP^4$, there is no rank six indecomposable bundle $\cE$ of 
 mixed type such that $\cE^\vee(1)$ is globally generated. 
 \end{prop}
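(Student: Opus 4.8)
The plan is to reduce both cases to the $\PP^2$ picture established above by restricting to a hyperplane, and then to contradict global generation with a Segre class computation. I would start with $\PP^3$. Let $\cE$ be a mixed-type rank six bundle, given by
$$0\ra \cO_{\PP^3}(-1)\oplus \cO_{\PP^3}(-2)\ra A^\vee\otimes\cO_{\PP^3}\ra\cE\ra 0$$
with $\dim A=8$; indecomposability forces $h^0(\cE^\vee)=0$, which by the dual sequence is equivalent to the injectivity of $A\hookrightarrow V^\vee\oplus S^2V^\vee$, and I assume $\cE^\vee(1)$ is globally generated. Since $\cE$ is locally free, restricting the defining sequence to a general hyperplane $H\cong\PP^2$ keeps it exact, so $\cF:=\cE_{|H}$ is again a mixed-type bundle, and $\cF^\vee(1)=\cE^\vee(1)_{|H}$ is globally generated because global generation survives restriction. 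By the $\PP^2$ analysis carried out above, $\cF$ then either splits or is a non-trivial extension of a rank four Drézet bundle by the quotient bundle $Q$, and in both situations $h^0(\cF^\vee(1))=8$.

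I would then bound the sections of $\cE^\vee(1)$ through the restriction sequence
$$0\ra\cE^\vee\ra\cE^\vee(1)\ra\cF^\vee(1)\ra 0.$$
Passing to cohomology and using $h^0(\cE^\vee)=0$ gives an injection $H^0(\cE^\vee(1))\hookrightarrow H^0(\cF^\vee(1))$, hence $h^0(\cE^\vee(1))\le 8$. For the opposing inequality I would extract a numerical obstruction from the defining sequence: dualizing and twisting yields $0\ra\cE^\vee(1)\ra\cO_{\PP^3}(1)^{\oplus 8}\ra\cO_{\PP^3}(2)\oplus\cO_{\PP^3}(3)\ra 0$, from which $c(\cE^\vee(1))=(1+h)^8/((1+2h)(1+3h))$ and so $s_3(\cE^\vee(1))=12\ne 0$. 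By the Segre-class criterion recalled in the Preliminaries, a rank six bundle needs $a_{min}=6-1+\min\{k\ge 0:s_k=0\}$ sections to be generated everywhere; since $s_3\ne 0$ while $s_4$ vanishes for dimension reasons on $\PP^3$, this minimum equals $9$. The inequality $8<9$ then contradicts global generation of $\cE^\vee(1)$, which settles $\PP^3$.

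For $\PP^4$ I would argue a fortiori by one further restriction. A mixed-type $\cE$ on $\PP^4$ restricts, on a general $\PP^3$, to a mixed-type bundle whose dual still has no sections (the composite of $A\hookrightarrow V^\vee\oplus S^2V^\vee$ with the restriction map stays injective for general $\PP^3$) and whose twisted dual is again globally generated, so the $\PP^3$ bound forces $h^0((\cE_{|\PP^3})^\vee(1))\le 8$; feeding this into the restriction sequence on $\PP^4$ together with $h^0(\cE^\vee)=0$ gives $h^0(\cE^\vee(1))\le 8$ once more. The Chern computation is unchanged in codimension three, so again $s_3(\cE^\vee(1))=12\ne 0$, forcing $a_{min}\ge 9>8$ and excluding global generation. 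I expect the main obstacle to be the cohomological bookkeeping under restriction: one must check that the defining sequence really does restrict to an exact sequence of the same shape presenting a vector bundle, that $h^0(\cE^\vee)=0$ persists after restriction and yields the clean injection of sections, and, above all, that the value $h^0(\cF^\vee(1))=8$ genuinely holds uniformly across the split and non-split configurations on $\PP^2$. Once these are secured, the clash between the upper bound $8$ and the Segre lower bound $9$ is immediate.
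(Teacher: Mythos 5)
Your proposal is correct and follows essentially the same route as the paper: restrict to a hyperplane, invoke the $\PP^2$ classification to get $h^0(\cF^\vee(1))=8$, use the restriction sequence $0\ra\cE^\vee\ra\cE^\vee(1)\ra\cF^\vee(1)\ra 0$ with $h^0(\cE^\vee)=0$ to bound $h^0(\cE^\vee(1))\le 8$, and conclude via $s_3(\cE^\vee(1))=12\ne 0$ that at least $6+3=9$ sections would be needed; the $\PP^4$ case then follows a fortiori. The only difference is that you spell out the $\PP^4$ restriction step and the Segre computation in more detail than the paper does.
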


\subsubsection{The Tango  case} Finally, there is a family of vector bundles on $\PP^3$ 
connected with the Tango bundle, which can be defined by an exact sequence of type
$$0\ra Q(-1)\oplus \cO_{\PP^3}(-1)\ra A_{e+4}\otimes \cO_{\PP^3}\ra\cE\ra 0.$$
(Remember that the Tango bundle $\cF^\vee$ fits into a sequence $0\ra Q(-1)\ra B_5
\otimes \cO_{\PP^3}\ra\cF^\vee\ra 0.$) Since $h^0(Q^\vee(1))=6$ and $h^0(\cO_{\PP^3}(1))=4$, we must have $h^0(\cE^\vee)>0$ as soon as $e>6$, and then $\cE$ has a trivial factor. 
Moreover if $e=6$, necessarily $A_{10}^\vee\simeq H^0(Q^\vee(1))\oplus H^0(\cO_{\PP^3}(1))$
and then $\cE$ has to decompose as $\wedge^2Q\oplus Q$. As a consequence there is no indecomposable  vector bundle of rank at least six in this family of vector bundles.

\medskip\noindent {\it Remark.} Some extra work would be required to get a full classification for $c_1=3$. This classification would necessarily include Example \ref{3Q-1}.

\subsection{Matrices of constant rank six}

We have just classified those matrices of constant rank $6$ whose associated vector bundle 
$\cE$ has $c_1(\cE)=3$. There remains to understand the cases where $c_1(\cE)<3$, and 
$\cE$ indecomposable.

For $c_1(\cE)=1$, we know that $\cE$ must be the quotient bundle $Q$ on $\PP^6=\PP V$,
and that the possible matrices are submatrices of the $7\times 21$ matrix 
encoding, in some chosen basis, the contraction map $V\otimes \wedge^2V^\vee\ra V^\vee$.

For $c_1(\cE)=2$, we know from Propositions \ref{classification-c=2} and 
\ref{steiner-c=2}
that $\cE$ must be a Drézet bundle on $\PP^n$, with $2\le n\le 4$. 
Since $S^2\CC^3$ has only dimension six, there is no such 
indecomposable rank six Drézet bundle on $\PP^2$. The case of 
$\PP^4$ is also easy to exclude:

\begin{lemma}
On $\PP^4$, $\cE_P^\vee(1)$ cannot be generated by global sections.
\end{lemma}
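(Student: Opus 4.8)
The plan is to run the liaison argument of Lemma \ref{mac}, now in five variables. Write $V=\CC^5$, $\PP^4=\PP(V)$, $R=K[x_0,\ldots,x_4]$, and let $I\subset R$ be the ideal generated by the seven quadrics spanning $P\subset S^2V^\vee$. Since $\cE_P$ is a vector bundle these quadrics have no common zero, so $R/I$ is Artinian with Hilbert function $(h'_t)=(1,5,8,h'_3,\ldots)$, where $h'_3=\dim(R/I)_3$. From the twisted dual sequence $0\ra\cE_P^\vee(1)\ra P\otimes\cO_{\PP^4}(1)\ra\cO_{\PP^4}(3)\ra 0$ I would first read off that $H^q(\cE_P^\vee(1))=0$ for $q\ge 2$, that $H^1(\cE_P^\vee(1))$ is the cokernel of the multiplication $P\otimes V^\vee\ra S^3V^\vee$, hence isomorphic to $(R/I)_3$ (see Proposition \ref{syz}), and that $\chi(\cE_P^\vee(1))=7\cdot 5-35=0$; these give $h^0(\cE_P^\vee(1))=h^1(\cE_P^\vee(1))=h'_3$. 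On the other hand $s(\cE_P^\vee(1))=(1+3h)/(1+h)^7$, so $s_4(\cE_P^\vee(1))\neq 0$ (indeed $s_4=-42$); exactly as in the rank five case on $\PP^3$, this forces $\cE_P^\vee(1)$, if globally generated, to admit at least $e+n=10$ sections. Thus it is enough to prove $h'_3\le 9$.

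The heart of the proof is the sharper bound $h'_3\le 8$, obtained by liaison. Because $V(P)=\emptyset$, the ideal $I$ has height $5$, and a repeated use of prime avoidance over $\CC$ lets me choose five quadrics $Q_1,\ldots,Q_5\in P$ forming a regular sequence; set $J=(Q_1,\ldots,Q_5)$, a complete intersection with $h$-vector $(h_t)=(1,5,10,10,5,1)$ and socle degree $r=5$. Linking $I$ to $I_1=(J:I)$ through $J$ and applying Proposition \ref{link}, the relation $h''_{r-i}=h_i-h'_i$ (where $h''_t$ is the Hilbert function of $R/I_1$) yields $h''_3=h_2-h'_2=10-8=2$ and $h''_2=h_3-h'_3=10-h'_3$. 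Since $R/I_1$ is standard graded Artinian, Macaulay's inequality $h''_3\le (h''_2)^{<2>}$ holds \cite{MR1251956}; as $1^{<2>}=1<2$ while $2^{<2>}=2$, the value $h''_3=2$ forces $h''_2\ge 2$, that is $h'_3\le 8$.

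Combining the two halves, $h^0(\cE_P^\vee(1))=h'_3\le 8<10$, so $\cE_P^\vee(1)$ cannot be generated by global sections. The step I expect to require the most care is the selection of the complete intersection $J$ inside $P$: I must verify that the no-common-zeros hypothesis (equivalently, that $I$ attains the maximal height $5$) genuinely permits extracting a regular sequence of five quadrics from the seven-dimensional linear system $P$, which is where prime avoidance enters and where one should check that the intermediate ideals $(Q_1,\ldots,Q_k)$ have height exactly $k$ for $k\le 5$. Everything afterwards is the numerical bookkeeping of Proposition \ref{link} together with Macaulay's growth bound, both already exercised in Lemma \ref{mac}.
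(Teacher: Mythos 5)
Your proof is correct and follows the same strategy as the paper: since $s_4(\cE_P^\vee(1))\ne 0$ forces at least $e+n=10$ sections while $\chi(\cE_P^\vee(1))=0$ gives $h^0=h^1=\dim(R/I)_3$, one only needs to bound this below $10$. The paper simply asserts $h^1(\cE_P^\vee(1))\le 9$ without detail, whereas you supply the justification by transplanting the liaison-plus-Macaulay argument of Lemma \ref{mac} to five variables (linking through a complete intersection of five quadrics with $h$-vector $(1,5,10,10,5,1)$), which is both valid and slightly sharper, giving $h^0\le 8$.
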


\proof The expected dimension of $H^0(\cE_P^\vee(1))$ is zero, while we would
need at least ten sections to generate it everywhere. If $\cE_P$ is an ordinary Drézet bundle we have $H^0\cE_P^\vee(1)=0$ while for any seven-dimensional subspace $P\subset S^2V^\vee$ consisting of quadrics without common zeros we have $H^1\cE_P^\vee(1)\le 9$ and hence $H^0\cE_P^\vee(1)\le 9$.  \qed 

\medskip So we can focus on $\PP^3=\PP V$, and consider an exact sequence 
$$0\ra \cO_{\PP^3}(-2)\ra P^\vee\otimes \cO_{\PP^3}\ra\cE_P\ra 0,$$
for some dimension seven subspace $P$ of $S^2V^\vee$, with no non-trivial common 
zero. An easy computation yields:

\begin{lemma} 
For $k\ge -1$, $\cE_P(k)$ has no higher cohomology. Moreover
$$h^0(\cE_P(k))=(k+1)(k^2+6k+7).$$
\end{lemma}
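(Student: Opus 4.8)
The plan is to derive everything directly from the defining sequence of $\cE_P$ by twisting and reading the long exact sequence in cohomology, using only the standard cohomology of line bundles on $\PP^3$. First I would twist
$$0\ra \cO_{\PP^3}(-2)\ra P^\vee\otimes \cO_{\PP^3}\ra\cE_P\ra 0$$
by $\cO_{\PP^3}(k)$, obtaining
$$0\ra \cO_{\PP^3}(k-2)\ra P^\vee\otimes \cO_{\PP^3}(k)\ra\cE_P(k)\ra 0,$$
where $\dim P^\vee=7$ since $P$ is seven-dimensional. The whole statement will then be extracted from the associated long exact cohomology sequence.

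The key input is that on $\PP^3$ one has $H^1(\cO_{\PP^3}(d))=H^2(\cO_{\PP^3}(d))=0$ for every $d$, while $H^0(\cO_{\PP^3}(d))\neq 0$ only for $d\ge 0$ and $H^3(\cO_{\PP^3}(d))\neq 0$ only for $d\le -4$. For $k\ge -1$ the twists $k$ and $k-2$ both lie in the range where $H^1$ and $H^2$ vanish, and moreover $k-2\ge -3>-4$ forces $H^3(\cO(k-2))=H^3(\cO(k))=0$. Substituting these into the long exact sequence, the groups surrounding $H^1(\cE_P(k))$, $H^2(\cE_P(k))$ and $H^3(\cE_P(k))$ all vanish, so $\cE_P(k)$ has no higher cohomology, which is the first assertion.

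For the dimension count, the vanishing of $H^1(\cO(k-2))$ makes
$$0\ra H^0(\cO(k-2))\ra P^\vee\otimes H^0(\cO(k))\ra H^0(\cE_P(k))\ra 0$$
exact, so that $h^0(\cE_P(k))=\chi(\cE_P(k))=7\chi(\cO(k))-\chi(\cO(k-2))$. Using $\chi(\cO_{\PP^3}(d))=\binom{d+3}{3}$ as a polynomial valid for all $d$, I would write this as $7\binom{k+3}{3}-\binom{k+1}{3}$, factor out $(k+1)$, and simplify the bracket via $7(k+2)(k+3)-(k-1)k=6(k^2+6k+7)$ to arrive at $(k+1)(k^2+6k+7)$. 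There is no genuine obstacle here; the only point requiring care is the bookkeeping of the vanishing ranges at the boundary $k=-1$, where $\cO(k)=\cO(-1)$ and $\cO(k-2)=\cO(-3)$ are both acyclic, so $\cE_P(-1)$ is acyclic and $h^0=0$, consistent with the formula. Verifying that the vanishing holds down to, but not below, $k=-1$ is the one step one must pin down precisely.
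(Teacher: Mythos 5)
Your computation is correct and is exactly the "easy computation" the paper has in mind: the paper offers no written proof for this lemma, merely asserting that it follows by twisting the defining sequence $0\ra \cO_{\PP^3}(-2)\ra P^\vee\otimes\cO_{\PP^3}\ra\cE_P\ra 0$ and reading off line-bundle cohomology on $\PP^3$, which is what you do. Your bookkeeping at the boundary ($H^3(\cO(k-2))=0$ precisely because $k-2\ge -3$ when $k\ge -1$) and the arithmetic $7\binom{k+3}{3}-\binom{k+1}{3}=(k+1)(k^2+6k+7)$ both check out.
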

 
Dualizing the defining sequence of $\cE_P$, we get:

\begin{lemma} \label{h1} \ 
\begin{enumerate}
\item
For $k\ge 0$, $h^0(\cE_P^\vee(-k))=0$.
\item For $k\ge 3$, $ h^1(\cE_P^\vee(-k))=0.$ 
\item
For $0\le  k\le 2$ and $q\ne 1$, $h^q(\cE_P^\vee(-k))=0$.
\item
$h^1(\cE_P^\vee)=3, \quad h^1(\cE_P^\vee(-1))=4, \quad h^1(\cE_P^\vee(-2))=1.$
\end{enumerate}
\end{lemma}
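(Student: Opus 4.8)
The plan is to dualize the defining sequence of $\cE_P$ and read off every cohomology group from the resulting long exact sequence, using only the cohomology of line bundles on $\PP^3$. Since $\cE_P$ is locally free, applying $\mathcal{H}om(-,\cO_{\PP^3})$ to
$$0\ra \cO_{\PP^3}(-2)\ra P^\vee\otimes \cO_{\PP^3}\ra\cE_P\ra 0$$
produces the short exact sequence
$$0\ra \cE_P^\vee\ra P\otimes \cO_{\PP^3}\ra \cO_{\PP^3}(2)\ra 0,$$
whose right-hand map is, on global sections, the tautological inclusion $P\hookrightarrow S^2V^\vee=H^0(\cO_{\PP^3}(2))$ (the sheaf map is surjective precisely because the quadrics of $P$ have no common zero, which keeps the dual sequence short exact). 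Twisting by $\cO_{\PP^3}(-k)$ gives
$$0\ra \cE_P^\vee(-k)\ra P\otimes \cO_{\PP^3}(-k)\ra \cO_{\PP^3}(2-k)\ra 0,$$
and the whole lemma follows by feeding the standard values $h^0(\cO_{\PP^3}(d))=\binom{d+3}{3}$ for $d\ge 0$ (and $0$ for $d<0$), $h^1=h^2=0$ in every twist, and $h^3(\cO_{\PP^3}(d))=h^0(\cO_{\PP^3}(-d-4))$ into the associated long exact sequence.

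First I would dispose of the vanishing statements. For (1), $H^0(\cE_P^\vee(-k))$ is the kernel of $P\otimes H^0(\cO_{\PP^3}(-k))\ra H^0(\cO_{\PP^3}(2-k))$; when $k\ge 1$ the source already vanishes, and when $k=0$ the map is the injection $P\hookrightarrow S^2V^\vee$, so this kernel is $0$ in all cases. For the $q=2$ and $q=3$ parts of (3) (the $q=0$ case being (1), and $q\ge 4$ being automatic), the relevant segments are bracketed by $H^1(\cO_{\PP^3}(2-k))=H^2(\cO_{\PP^3}(-k))=0$ and by $H^2(\cO_{\PP^3}(2-k))=0$ together with $H^3(\cO_{\PP^3}(-k))=0$; the latter holds precisely for $k\le 3$, giving $h^2(\cE_P^\vee(-k))=0$ for all $k$ and $h^3(\cE_P^\vee(-k))=0$ for $0\le k\le 3$. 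For (2) with $k\ge 3$, the group $H^1(\cE_P^\vee(-k))$ is the cokernel of $P\otimes H^0(\cO_{\PP^3}(-k))\ra H^0(\cO_{\PP^3}(2-k))$ (using $H^1(\cO_{\PP^3}(-k))=0$), and both source and target vanish once $k\ge 3$.

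It then remains to compute the three numbers in (4), each again as the cokernel of $P\otimes H^0(\cO_{\PP^3}(-k))\ra H^0(\cO_{\PP^3}(2-k))$. For $k=0$ the source is $P$, the target is $S^2V^\vee$, and the map is the inclusion, so the cokernel is $S^2V^\vee/P$ of dimension $10-7=3$. For $k=1$ and $k=2$ the source vanishes, so the cokernel is the whole target: $H^0(\cO_{\PP^3}(1))=V^\vee$ of dimension $4$, respectively $H^0(\cO_{\PP^3})=\CC$ of dimension $1$. This yields $h^1(\cE_P^\vee)=3$, $h^1(\cE_P^\vee(-1))=4$, and $h^1(\cE_P^\vee(-2))=1$.

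There is essentially no serious obstacle here: the argument is one four-term exact sequence combined with Bott vanishing for line bundles on $\PP^3$. The single point that requires care, and that carries all the nontrivial content, is the correct identification of the evaluation map $P\otimes\cO_{\PP^3}\ra\cO_{\PP^3}(2)$ as the tautological inclusion on global sections; this is exactly what makes the $k=0$ cokernel equal to $S^2V^\vee/P$ and forces the value $3$, and it relies on the standing hypothesis that $P$ is genuinely seven-dimensional with no common zero, so that the dualized sequence remains short exact.
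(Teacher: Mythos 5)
Your proof is correct and follows exactly the route the paper intends: the paper introduces this lemma with the single phrase ``Dualizing the defining sequence of $\cE_P$, we get:'' and leaves the verification to the reader, which is precisely your computation via the twisted dual sequence $0\ra \cE_P^\vee(-k)\ra P\otimes \cO_{\PP^3}(-k)\ra \cO_{\PP^3}(2-k)\ra 0$ and line bundle cohomology on $\PP^3$. All the numerical values check out ($10-7=3$, $\dim V^\vee=4$, $\dim\CC=1$), and you correctly flag the one substantive point, namely that surjectivity of $P\otimes\cO_{\PP^3}\ra\cO_{\PP^3}(2)$ uses the base-point-freeness of $P$.
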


Denote by $\fsl(\cE_P)$ the vector bundle of traceless endomorphisms of $\cE_P$.

\begin{prop}
$\cE_P$ is simple, and $h^q(\fsl(\cE_P))=\delta_{q,1}21$. 
\end{prop}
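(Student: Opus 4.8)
The plan is to reduce both assertions to the cohomology of $End(\cE_P)=\cE_P^\vee\otimes\cE_P$ and then read it off from Lemma \ref{h1}. Since we work over $\CC$ and $\cE_P$ has rank $6$, the trace map splits $End(\cE_P)\cong\cO_{\PP^3}\oplus\fsl(\cE_P)$, and because $h^q(\cO_{\PP^3})=\delta_{q,0}$ on $\PP^3$ this gives $h^q(\fsl(\cE_P))=h^q(End(\cE_P))-\delta_{q,0}$. Thus simplicity is the statement $h^0(End(\cE_P))=1$, equivalently $h^0(\fsl(\cE_P))=0$, and the whole proposition follows once I establish $h^0(End)=1$, $h^1(End)=21$ and $h^2(End)=h^3(End)=0$.

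To get these numbers I tensor the defining sequence $0\ra\cO_{\PP^3}(-2)\ra P^\vee\otimes\cO_{\PP^3}\ra\cE_P\ra 0$ by $\cE_P^\vee$, obtaining $0\ra\cE_P^\vee(-2)\ra P^\vee\otimes\cE_P^\vee\ra \cE_P^\vee\otimes\cE_P\ra 0$, and run its long exact cohomology sequence, feeding in Lemma \ref{h1}. The inputs I need are $h^0(\cE_P^\vee(-k))=0$ for $k\ge 0$, the vanishing $h^q(\cE_P^\vee(-k))=0$ for $q\ne 1$ and $0\le k\le 2$, and the two values $h^1(\cE_P^\vee)=3$ and $h^1(\cE_P^\vee(-2))=1$.

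The bookkeeping then runs as follows. The terms $H^0(\cE_P^\vee(-2))$ and $P^\vee\otimes H^0(\cE_P^\vee)$ both vanish, so the connecting map embeds $H^0(End(\cE_P))$ into $H^1(\cE_P^\vee(-2))=\CC$; since the identity is a nonzero global endomorphism this forces $h^0(End(\cE_P))=1$ and makes the connecting map an isomorphism. Exactness next kills the map $H^1(\cE_P^\vee(-2))\ra P^\vee\otimes H^1(\cE_P^\vee)$, whence $H^1(End(\cE_P))\cong P^\vee\otimes H^1(\cE_P^\vee)$, of dimension $7\times 3=21$ (here $\dim P^\vee=7$ and $H^1(\cE_P^\vee)=S^2V^\vee/P$ has dimension $10-7=3$). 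Finally $h^2(\cE_P^\vee(-2))=h^2(\cE_P^\vee)=h^3(\cE_P^\vee(-2))=h^3(\cE_P^\vee)=0$ give $h^2(End)=h^3(End)=0$. Subtracting the contribution of $\cO_{\PP^3}$ yields $h^q(\fsl(\cE_P))=\delta_{q,1}\cdot 21$.

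I expect no real obstacle here: this is the specialization to $(n,c)=(3,2)$ of the general Drézet simplicity statement proved earlier, and the only genuine input—the cohomology of the twists $\cE_P^\vee(-k)$—is exactly what Lemma \ref{h1} supplies. The single point deserving a word of care is checking $h^0(End)=1$ rather than $0$, which is handled by placing the identity endomorphism inside the one-dimensional space $H^1(\cE_P^\vee(-2))$ into which $H^0(End)$ injects; the value $21$ then reproduces $\dim Hom(P,S^2V^\vee/P)$ predicted by the general computation.
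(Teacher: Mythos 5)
Your proof is correct and follows essentially the same route as the paper: tensor the defining sequence of $\cE_P$ by $\cE_P^\vee$, feed the vanishings and the values $h^1(\cE_P^\vee(-2))=1$, $h^1(\cE_P^\vee)=3$ from Lemma \ref{h1} into the long exact sequence, and use the identity endomorphism to pin down $h^0(End(\cE_P))=1$, forcing $H^1(End(\cE_P))\simeq P^\vee\otimes H^1(\cE_P^\vee)$ of dimension $21$. The only difference is that you make explicit the trace splitting $End(\cE_P)\cong\cO_{\PP^3}\oplus\fsl(\cE_P)$ to pass to traceless endomorphisms, which the paper leaves implicit.
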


\proof Using the previous lemmas we get  
the long exact sequence 
$$
0\rightarrow H^0(End(\cE_P))\rightarrow H^1(\cE_P^\vee(-2))
\rightarrow P^\vee \otimes H^1(\cE_P^\vee) \rightarrow H^1(End(\cE_P))\rightarrow 0,$$
and $H^q(End(\cE_P))=0$ for $q>1$. Since $H^0(End(\cE_P))$ contains the
homotheties and
$H^1(\cE_P^\vee(-2))$ is one dimensional, we must have $H^0(End(\cE_P))=\CC$, i.e.,   
$\cE_P$ is simple. Then we deduce that $H^1(End(\cE_P))\simeq P^\vee \otimes H^1(\cE_P^\vee)$ has dimension $7\times 3=21$.\qed

\begin{lemma}
For $P$ generic,  $\cE_P$ is ordinary and 
$h^0(\cE_P^\vee(1))=8$.
\end{lemma}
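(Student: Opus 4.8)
The plan is to recast the whole statement as the surjectivity of a single multiplication map and then to exhibit one explicit space of quadrics realizing it. First I would dualize the defining sequence of $\cE_P$ and twist by $\cO_{\PP^3}(1)$ to obtain
$$0\ra \cE_P^\vee(1)\ra P\otimes\cO_{\PP^3}(1)\ra \cO_{\PP^3}(3)\ra 0.$$
Taking cohomology and using $H^1(\cO_{\PP^3}(1))=0$ gives exactly the sequence recorded earlier,
$$0\ra H^0(\cE_P^\vee(1))\ra P\otimes V^\vee\stackrel{\mu_P}{\ra} S^3V^\vee\ra H^1(\cE_P^\vee(1))\ra 0,$$
where $\mu_P$ is the multiplication map $q\otimes\ell\mapsto q\ell$. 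Since $\dim(P\otimes V^\vee)=7\cdot4=28$ and $\dim S^3V^\vee=\binom{6}{3}=20$, the bundle $\cE_P$ is ordinary precisely when $\mu_P$ is surjective, and in that case $h^0(\cE_P^\vee(1))=28-20=8$. Thus the entire lemma reduces to the surjectivity of $\mu_P$ for generic $P$.

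Next I would note that the rank of $\mu_P$ is a lower semicontinuous function of $P\in Gr(7,S^2V^\vee)$, so the locus where $\mu_P$ is surjective is open; likewise the locus of $P$ with empty base locus (on which $\cE_P$ is genuinely a vector bundle) is open and nonempty. It therefore suffices to produce a single seven-dimensional $P$ lying in both loci. Writing $I$ for the ideal generated by the quadrics in $P$, I observe that $\operatorname{im}(\mu_P)=V^\vee\cdot P=I_3$, so surjectivity of $\mu_P$ is equivalent to $(R/I)_3=0$; hence I must find a $P$ with $V(I)=\varnothing$ in $\PP^3$ and $(R/I)_3=0$.

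For the example I would take the monomial space
$$P=\langle x_0^2,\;x_1^2,\;x_2^2,\;x_3^2,\;x_0x_2,\;x_0x_3,\;x_1x_3\rangle.$$
Because all four squares $x_i^2$ lie in $P$, the common zero locus is empty, so $\cE_P$ is an honest Drézet bundle. To see $(R/I)_3=0$ I only need that every cubic monomial is divisible by one of these seven quadrics: each cube $x_i^3$ and each monomial $x_i^2x_j$ is killed by the corresponding square $x_i^2$, while for the four squarefree cubics one checks directly that $x_0x_2\mid x_0x_1x_2$, $\;x_0x_3\mid x_0x_1x_3$, $\;x_0x_2\mid x_0x_2x_3$ and $x_1x_3\mid x_1x_2x_3$. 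Hence $\operatorname{im}(\mu_P)$ contains all $20$ cubic monomials, so $\mu_P$ is surjective for this $P$, and therefore for the generic $P$ as well, which yields both ordinarity and $h^0(\cE_P^\vee(1))=8$.

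The argument is almost free of obstacles once it is funneled through $\mu_P$; the only point requiring care is to arrange the explicit quadrics so that \emph{both} conditions hold at once, which is why the construction deliberately includes all four squares (forcing an empty base locus) while keeping exactly the three mixed monomials $x_0x_2,x_0x_3,x_1x_3$ needed to absorb the four squarefree cubics. Conceptually, the vanishing is no accident: the expected Fröberg series $\bigl[(1-t^2)^7/(1-t)^4\bigr]_+=\bigl[(1-t)^3(1+t)^7\bigr]_+$ already has a nonpositive coefficient in degree $3$, which explains why $(R/I)_3$ should vanish for a generic choice, the explicit monomial $P$ serving to make this rigorous.
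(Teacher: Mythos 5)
Your proof is correct. The reduction is the same as the paper's: both arguments funnel the statement through the four-term sequence $0\to H^0(\cE_P^\vee(1))\to P\otimes V^\vee\to S^3V^\vee\to H^1(\cE_P^\vee(1))\to 0$, so that ordinarity becomes surjectivity of the multiplication map $\mu_P$ and the count $28-20=8$ follows. Where you diverge is in how generic surjectivity is established: the paper invokes its general genericity statement for Drézet bundles (ultimately the Hochster--Laksov maximal-rank result for generic forms), whereas you use semicontinuity of rank together with an explicit monomial witness. Your witness checks out --- the seven quadrics $x_0^2,x_1^2,x_2^2,x_3^2,x_0x_2,x_0x_3,x_1x_3$ have empty base locus and their degree-$3$ multiples cover all $20$ cubic monomials, the three mixed quadrics absorbing the four squarefree cubics exactly as you verify. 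What your route buys is self-containedness: note that the hypothesis of the paper's Theorem~\ref{drezet-generation} (namely $\dim P>(\tfrac1c+\tfrac1{n+1})\dim S^cV^\vee$, i.e.\ $7>7.5$) is not literally satisfied in this case, so only the ordinarity half of that theorem (the Hochster--Laksov part) is actually being used there; your explicit example sidesteps any such citation entirely. The Fr\"oberg-series remark is a pleasant heuristic but carries no logical weight, and correctly so, since you rest the argument on the monomial computation.
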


\begin{proof} By Proposition \ref{syz},   $H^0(\cE_P^\vee(1)) $ coincides with 
the space $Syz_1(P)$ of linear syzygies between the quadrics in $P$, that is,
the kernel of the multiplication map $P\otimes V^\vee\ra S^3V^\vee$. 
By Theorem \ref{drezet-generation}, for $P$ generic this morphism has maximal rank, which means 
here that it is surjective. So its kernel has dimension $7\times 4-20=8$.
\end{proof}

Given a rank six vector bundle on $\PP^3$, we need in principle at least $6+3=9$ 
sections to generate it everywhere; so $8$ sections should not suffice. But note that 
 the identity   $$s(\cE_P^\vee(1))=(1+3h)/(1+h)^7 $$ 
implies $s_3(\cE_P^\vee(1))=0$, which preserves the hope that $\cE_P^\vee(1)$  be generated by global sections. Indeed, we have: 

\begin{prop} 
For $P$ general, $\cE_P^\vee(1)$ is generated by global sections.
\end{prop}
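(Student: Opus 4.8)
The plan is to combine the ordinariness of the general Drézet bundle with the global generation criterion of Lemma~\ref{criterion}. By the preceding lemma the general $P\in Gr(7,S^2V^\vee)$ is ordinary, with no common zero and $h^0(\cE_P^\vee(1))=8$, so I restrict attention to this open dense locus $U$. For $P\in U$ Lemma~\ref{criterion} reduces global generation of $\cE_P^\vee(1)$ to the injectivity, for every $[v]\in\PP(V)$, of the map $S^3V/\CC v^3\to (V/\CC v)^{7}$, $T\mapsto (P_0\rfloor T,\ldots,P_6\rfloor T)$; equivalently, to the non-existence of a tensor $T\in S^3V\setminus\CC v^3$ with $\theta_T(P)\subseteq\CC v$, where $\theta_T(Q)=Q\rfloor T$. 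I would therefore introduce the ``bad locus'' $\mathcal B\subseteq U$ of those $P$ admitting such a pair $([v],[T])$, and aim to show $\dim\mathcal B<\dim Gr(7,S^2V^\vee)=21$.

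To bound $\dim \mathcal B$ I would stratify by the global rank $d$ of $T$, exactly as in the proof of Theorem~\ref{drezet-generation}. The case $d=1$ is excluded: then $T=t^3$ and $\theta_T(Q)\in\CC t$, so $\theta_T(P)\subseteq\CC v$ forces either $t=v$ (whence $T\in\CC v^3$) or $Q(t,t)=0$ for all $Q\in P$, impossible since $P$ has no common zero. For $d\ge 3$ the incidence-variety estimate in the proof of Theorem~\ref{drezet-generation} bounds the corresponding stratum of $\mathcal B$ by $\dim Gr-N_d$, and here $N_3=N_4=1>0$, so those contributions have dimension at most $20$. The entire difficulty is concentrated in the borderline value $d=2$, where $N_2=-1$ and the naive fibre count overshoots $21$ by exactly one.

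The crux is therefore the rank-two stratum, and here I would argue directly. Suppose $T$ has global rank $2$, so $T\in S^3W$ for a unique plane $W=W_T$, and suppose $\theta_T(P)\subseteq\CC v$. Since $P$ is ordinary, the multiplication $P\otimes V^\vee\to S^3V^\vee$ is surjective, so no nonzero $T$ is annihilated by all of $P$; hence $\theta_T(P)=\CC v$ and in particular $v\in W$. Now choose $\ell\in V^\vee$ with $\ell(v)=0$ and $\ell|_W\neq 0$, and set $\bar q:=\ell\rfloor T\in S^2V$. For every $Q\in P$ one has $\langle Q,\bar q\rangle=\langle \ell, Q\rfloor T\rangle=\ell(\theta_T(Q))=0$, so $P\subseteq \bar q^{\perp}$. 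Moreover $\bar q$ is supported on the plane $W$, hence has rank at most $2$, and $\bar q\neq 0$ because $\ell\rfloor T=0$ would force $T\in S^3(\ker(\ell|_W))$, of global rank one. Thus every $P\in\mathcal B$ coming from a rank-two tensor lies in $\bar q^{\perp}$ for some nonzero quadric $\bar q$ of rank at most two.

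Finally I would conclude by a dimension count on this last locus. The quadrics of rank at most $2$ form a determinantal subvariety $\Sigma\subset\PP(S^2V)$ of dimension $6$, and for each $[\bar q]\in\Sigma$ the set of $P$ with $P\subseteq\bar q^{\perp}$ is a copy of $Gr(7,9)$, of dimension $14$; hence the rank-two part of $\mathcal B$ has dimension at most $6+14=20<21$. Together with the $d=1$ and $d\ge 3$ cases this gives $\dim\mathcal B<21$, so the general $P$ is not bad and, being ordinary, yields a globally generated $\cE_P^\vee(1)$. The main obstacle—and the reason the borderline case is delicate—is precisely that the naive fibre count fails at $d=2$; the decisive point that repairs it is that the datum of the point $[v]$ is absorbed into the single rank-$\le 2$ quadric $\bar q$, which recovers the one unit of dimension lost in $N_2=-1$.
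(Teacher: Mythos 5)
Your strategy is the paper's: reduce via Lemma~\ref{criterion} to the non-existence of a pair $([v],[T])$ with $T\notin\CC v^3$ and $\theta_T(P)\subseteq \CC v$, stratify by the global rank $d$ of $T$, and bound the bad locus in $Gr(7,S^2V^\vee)$. Your treatment of $d=2$ is correct and genuinely sharper than what the paper writes: repackaging the obstruction as ``$N=P^\perp$ contains a nonzero quadric $\bar q=\ell\rfloor T$ of rank at most two'' replaces the $8$-dimensional family of data $(W_T,[T],[v])$ by the $6$-dimensional secant variety of the Veronese, and the bound $6+14=20<21$ is clean. (The paper's own count of ``$27-6\rho$ parameters'' forgets the moduli of $T$ altogether, so your $d=2$ argument actually repairs a defect of the published proof.)

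The gap is at $d\ge 3$. The estimate you quote from the proof of Theorem~\ref{drezet-generation} undercounts the incidence variety: the pairs $([v],[T])$ with $T\in S^3W_T$, $\dim W_T=d$, $v\in W_T$ form a family of dimension $\dim Gr(d,4)+\dim\PP(S^3\CC^d)+(d-1)=\dim Gr(d,4)+\dim S^3\CC^d+(d-2)$, so the true codimension of the $d$-stratum is $N_d-(d-2)$, which is $0$ for $d=3$ and $-1$ for $d=4$; the conclusion ``dimension at most $20$'' does not follow. The case $d=3$ can be salvaged in the spirit of your $d=2$ trick: the two-dimensional space $\{\ell\rfloor T:\ell\in v^\perp\}\subset S^2W_T$ must lie in $N$, a Schubert condition of codimension $6$ on $N\in Gr(3,S^2V)$, giving $3+15=18<21$. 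But $d=4$ is a real obstacle: there $[T]$ is free in $\PP^{19}$, $[v]$ is free in $\PP^3$, and $P$ is forced to equal $\theta_T^{-1}(\CC v)$, so the bad locus is the image of a $22$-dimensional family in the $21$-dimensional Grassmannian whose only evident fibre direction is the pencil $T\mapsto T+cv^3$. Equivalently, $N$ is a net of polar quadrics $\{\ell\rfloor T:\ell(v)=0\}$ of a nondegenerate cubic, and excluding that such nets sweep out all of $Gr(3,S^2V)$ requires an argument, not a count. The most economical repair is openness of global generation on the locus where $P$ is ordinary with $h^0(\cE_P^\vee(1))=8$, combined with one verified instance such as the explicit net given later in the section, or with the instanton-theoretic characterization via Gruson--Skiti.
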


\proof For $P$ general, $\cE_P$ is ordinary and we can apply Lemma \ref{criterion},
according to which if $\cE_P^\vee(1)$ is not globally generated at $[v]$, 
there must exist a cubic  tensor $\theta \in S^3V$, independent from $v^3$, 
such that the contractions $P\rfloor\theta\subset\CC v$. 

This suggests to stratify 
$S^3V$ by the rank $\rho$ of $\iota(\theta)$. If the latter condition is realized, $[\ell]$ must be contained
in the projectivized image, a copy of $\PP^{\rho-1}$, and ${N^\perp}$ must be contained in a copy of the Grassmannian
$Gr(7,11-\rho)$, of dimension $7\times (4-\rho)$. Hence a total amount of $27-6\rho$ parameters, which is smaller 
than the dimension $21$ of $Gr(7,10)$ as soon as $\rho\ge 2$. So for ${N^\perp}$ generic $\rho\ge 2$ is impossible. But suppose that $\rho=1$. Then $\theta$
must be the cube $v^3$ of a single vector, and for $\ell$ to belong to the image of $G(\theta)$ it 
must be proportional to $v$, which means that $\bar\theta=0$ and concludes the proof. \qed 

\medskip
When $\cE_P^\vee(1)$ is generated, consider the exact sequence 
$$
0 \longrightarrow \cG_P(-2) \longrightarrow H^0(\cE_P^\vee(1))\otimes \cO_{\PP^3}  \longrightarrow \cE_P^\vee(1)
 \longrightarrow 0.$$
The rank two bundle $\cG_P\simeq\cG_P^\vee$ has Chern numbers $c_1=0$ and $c_2=3$.  

\begin{lemma}\label{gstablem}
$\cG_P$ is stable.
%$\cF$ is an instanton bundle.
\end{lemma}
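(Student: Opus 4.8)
The plan is to prove stability by the standard reduction for rank two bundles, after observing that the defining sequence already exhibits $\cG_P$ as a (charge $3$) mathematical instanton bundle. Since $\cG_P$ has rank two, $c_1(\cG_P)=0$ and $\mathrm{Pic}(\PP^3)=\ZZ$, every rank one subsheaf has saturation of the form $\cO_{\PP^3}(a)$, and such a sub-line-bundle destabilizes $\cG_P$ precisely when $a\ge 0$, that is when $H^0(\cG_P(-a))\ne 0$ for some $a\ge 0$. Because multiplication by a linear form is injective on sections of the locally free sheaf $\cG_P$, one has $H^0(\cG_P(-a))\hookrightarrow H^0(\cG_P)$ for every $a\ge 0$; hence $\cG_P$ is stable if and only if $H^0(\cG_P)=0$, and the whole statement reduces to this single vanishing.

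First I would extract two vanishings for free from the defining sequence $0\ra \cG_P(-2)\ra H^0(\cE_P^\vee(1))\otimes\cO_{\PP^3}\ra \cE_P^\vee(1)\ra 0$. Taking cohomology and using $H^0(\cO_{\PP^3})=\CC$ and $H^1(\cO_{\PP^3})=0$, the relevant map $H^0(\cE_P^\vee(1))\otimes H^0(\cO_{\PP^3})\ra H^0(\cE_P^\vee(1))$ is the evaluation of the tautological sections, hence the identity. Consequently $H^0(\cG_P(-2))=0$ and $H^1(\cG_P(-2))=0$; in particular $H^0(\cG_P(m))=0$ for all $m\le -2$, and $\cG_P$ satisfies the instanton condition $H^1(\cG_P(-2))=0$, which is exactly the input the Serre construction will need.

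Then I would bootstrap upward from this critical twist. For $H^0(\cG_P(-1))$: a nonzero section can have no divisorial zero, since factoring out a hypersurface of degree $\ge 1$ would place it in $H^0(\cG_P(-2))=0$; thus its zero locus $W$ is a locally complete intersection of pure codimension two with $\deg W=c_2(\cG_P(-1))=4$, yielding $0\ra\cO_{\PP^3}\ra\cG_P(-1)\ra\cI_W(-2)\ra 0$. Twisting by $\cO_{\PP^3}(-1)$ and using $H^1(\cG_P(-2))=0$ together with $H^1(\cO_{\PP^3}(-1))=H^2(\cO_{\PP^3}(-1))=0$ forces $H^1(\cI_W(-3))=0$; but $0\ra\cI_W(-3)\ra\cO_{\PP^3}(-3)\ra\cO_W(-3)\ra 0$ with $H^0(\cO_{\PP^3}(-3))=0$ gives $H^0(\cO_W(-3))\hookrightarrow H^1(\cI_W(-3))$, contradicting $h^0(\cO_W(-3))=\deg W=4\ne 0$. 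Hence $H^0(\cG_P(-1))=0$. The identical mechanism then yields $H^0(\cG_P)=0$: a nonzero section has no divisorial part (else it lies in $H^0(\cG_P(-1))=0$), so it vanishes on a codimension two subscheme $Z$ with $\deg Z=c_2(\cG_P)=3$, giving $0\ra\cO_{\PP^3}\ra\cG_P\ra\cI_Z\ra 0$; twisting by $\cO_{\PP^3}(-2)$ and invoking $H^1(\cG_P(-2))=0$ produces $H^1(\cI_Z(-2))=0$, which contradicts $h^0(\cO_Z(-2))=\deg Z=3\ne 0$. This establishes $H^0(\cG_P)=0$, and with the reduction above, the stability of $\cG_P$.

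All the computations are routine sheaf cohomology on $\PP^3$, so there is no serious obstacle; the only points requiring care are organizational. One must secure the two vanishings in the critical twist $-2$ from the tautological evaluation sequence \emph{before} anything else, and one must rule out divisorial zeros stepwise, climbing from twist $-2$ up to twist $0$, so that the Serre construction applies. The conceptual heart, such as it is, is the recognition that the defining sequence encodes $H^1(\cG_P(-2))=0$ automatically, so that no explicit description of the net of quadrics $P$, nor any analysis of the associated multiplication maps, is needed.
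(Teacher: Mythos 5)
Your reduction of stability to the single vanishing $H^0(\cG_P)=0$ is correct, and so is your first step: taking cohomology of the defining sequence does give $H^0(\cG_P(-2))=H^1(\cG_P(-2))=0$, since the induced map on global sections is the identity. The proof breaks in the bootstrap. On $\PP^3$ the zero scheme $W$ of a section of a rank two bundle with no divisorial component has codimension two, hence is a \emph{curve}, not a finite scheme, so the identity $h^0(\cO_W(-3))=\deg W$ that you use to produce the contradiction is false. In fact $h^0(\cO_W(-3))=0$ whenever $W$ is reduced, because $\cO_W(-3)$ has negative degree on every irreducible component; so in the typical case your argument correctly derives $H^1(\cI_W(-3))=0$ and then meets no contradiction at all. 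The same error invalidates the twist-$0$ step with the degree three curve $Z$. Consequently neither $H^0(\cG_P(-1))=0$ nor $H^0(\cG_P)=0$ is established.

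The gap is not cosmetic: the two vanishings you secured at twist $-2$, together with the numerical data $(\mathrm{rank},c_1,c_2)=(2,0,3)$, are not by themselves enough to force $H^0(\cG_P)=0$, so a correct proof must feed in more about $\cG_P$ than you have used. The paper does this by exploiting the self-duality $\cG_P\simeq\cG_P^\vee$ (automatic for rank two with $c_1=0$): dualizing and twisting the defining sequence yields
$$0\ra \cE_P(-k-3)\ra H^0(\cE_P^\vee(1))^\vee\otimes\cO_{\PP^3}(-k-2)\ra \cG_P(-k)\ra 0,$$
and then $h^0(\cG_P(-k))=0$ for all $k\ge 0$ follows at once from $h^0(\cO_{\PP^3}(-k-2))=0$ together with the vanishing $h^1(\cE_P(-k-3))=0$ supplied by the preceding cohomology lemmas for the Drézet bundle $\cE_P$. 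If you prefer to keep your Serre-construction framework, you would instead have to exclude a section of $\cG_P$ (resp.\ of $\cG_P(-1)$) vanishing on a subcanonical curve of degree $3$ with $\omega_Z\simeq\cO_Z(-4)$ (resp.\ of degree $4$ with $\omega_W\simeq\cO_W(-6)$), and that again requires input beyond the twist $-2$ vanishings.
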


\proof Since $\cG_P$ has rank two and zero first Chern class, according to Hoppe's criterion for stability 
\cite{MR757476}
we just need to check that
$h^0(\cG_P(-k))=0$ for every $k\ge 0$. Because of  the isomorphism 
$\cG_P\simeq\cG_P^\vee$ 
we have an  exact sequence 
$$
0 \longrightarrow \cE_P(-k-3) \longrightarrow H^0(\cE_P^\vee(1))^\vee \otimes \mathcal{O}_\PP(-k-2) \longrightarrow \cG_P(-k)
 \longrightarrow 0.$$
 By Lemma \ref{h1}, $h^1(\cE_P(-k-3))=0$ for $k\ge 0$, hence the claim. 
\qed

\medskip
A {\it mathematical instanton}  on $\PP^3$ is a rank two stable vector bundle
$\cF$ with $c_1(\cF)=0$ and $h^1(\cF(-2))=0$. Its {\it charge} is defined as $c_2(\cF)$. 

\begin{lemma}
$\cG_P$ is a mathematical instanton of charge three.
\end{lemma}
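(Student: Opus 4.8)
The plan is to check, one by one, the defining properties of a mathematical instanton of charge three recalled just above the statement: that $\cG_P$ is a rank two stable bundle with $c_1=0$, with $h^1(\cG_P(-2))=0$, and whose charge equals its second Chern class. Three of these are already at our disposal. By the discussion immediately preceding Lemma~\ref{gstablem}, $\cG_P$ is a rank two bundle with $c_1(\cG_P)=0$ and $c_2(\cG_P)=3$, and Lemma~\ref{gstablem} asserts precisely that $\cG_P$ is stable. Hence its charge is $c_2(\cG_P)=3$, and the whole statement reduces to the single instanton vanishing $h^1(\cG_P(-2))=0$.

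For this vanishing I would argue directly from the defining sequence of $\cG_P$,
$$0\ra \cG_P(-2)\ra H^0(\cE_P^\vee(1))\otimes \cO_{\PP^3}\ra \cE_P^\vee(1)\ra 0,$$
which is available because $\cE_P^\vee(1)$ is generated by global sections for general $P$. The key point is that the twist $\cG_P(-2)$ occurring here is exactly the twist appearing in the instanton condition, which is what makes the computation immediate. Passing to the long exact sequence in cohomology, the induced map on global sections $H^0(\cE_P^\vee(1))\otimes H^0(\cO_{\PP^3})\ra H^0(\cE_P^\vee(1))$ is the tautological identification (the identity of $H^0(\cE_P^\vee(1))$), in particular surjective; and $H^1(\cO_{\PP^3})=0$. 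Therefore $H^1(\cG_P(-2))$ injects into $H^0(\cE_P^\vee(1))\otimes H^1(\cO_{\PP^3})=0$, so $h^1(\cG_P(-2))=0$, which completes the verification.

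There is no genuine obstacle here: the proof is an assembly of facts already established, and the only point deserving a moment's care is checking that the induced map on $H^0$ is indeed an isomorphism, so that the connecting map into $H^1(\cG_P(-2))$ vanishes. As an independent cross-check one could instead use the self-dual presentation of $\cG_P$ from the proof of Lemma~\ref{gstablem} with $k=2$, reducing $h^1(\cG_P(-2))$ to $H^2(\cE_P(-5))$; the latter is the kernel of the Serre dual of the multiplication map $P\otimes V^\vee\ra S^3V^\vee$, which is surjective precisely because $\cE_P$ is ordinary for general $P$, whence $H^2(\cE_P(-5))=0$ and one recovers the same conclusion.
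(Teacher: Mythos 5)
Your proof is correct and follows essentially the same route as the paper: stability and the Chern class data are quoted from the preceding discussion and Lemma~\ref{gstablem}, and the vanishing $h^1(\cG_P(-2))=0$ is read off from the cohomology sequence of $0\to\cG_P(-2)\to H^0(\cE_P^\vee(1))\otimes\cO_{\PP^3}\to\cE_P^\vee(1)\to 0$, which is exactly what the paper means by ``follows from the surjectivity of the evaluation morphism.'' Your write-up merely makes the long exact sequence explicit, and the Serre-duality cross-check via ordinariness of $\cE_P$ is a valid (if redundant) alternative.
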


\proof  Lemma \ref{gstablem} addresses  the stability condition, 
while the vanishing of  $h^1(\cG_P(-2))$   follows from the surjectivity
of the evaluation morphism for $ \cE_P^\vee(1)$. \qed 

\medskip
Let us dualize the  defining sequence of $\cG_P$, twist it by $\cO_{\PP^3}(1)$
and concatenate it with the  defining sequence of $\cE_P$. 
Since $\cG_P$ is self-dual, we get the free resolution 
$$
0\rightarrow \mathcal{O}_{\PP^3}(-5)\rightarrow \mathcal{O}_{\PP^3}(-3)^{\oplus 7} \rightarrow \mathcal{O}_{\PP^3}(-2)^{\oplus 8}
\rightarrow \cG_P\rightarrow 0.
$$
The matrix of linear forms defined by $\cE_P$ is exactly the one that appears  
in the middle arrow. Note that this exact sequence already appears in \cite[Proposition 4.2]{schreyer}. In \cite{gruson-skiti} it was also observed that via Serre's correspondence, $\cG_P(2)$ is associated to elliptic curves of degree seven in $\PP^3$.

\medskip
Stable rank two vector bundles $\cG$  on $\PP^3$ with Chern classes $c_1=0$ and $c_2=3$ have been extensively 
studied.  The main result of  \cite{MR611278} is that their moduli space 
$\cM(3)$ is the disjoint union of 
two irreducible components $\cM_\alpha(3)$, where $\alpha\in \ZZ/2\ZZ$ is the Atiyah-Rees invariant, defined as the parity of  $h^1(\cG(-2))$. In particular, instanton bundles are parametrized by an open subset 
of $\cM_0(3)$. Both components are smooth, rational, of dimension $21$. 

% These bundles seem to have been studied by Tikhomirov (cited in Szurek-Wisniewski,
%Fano bundles over P3 and Q3, Pacific J. Math. 141 (1990), no. 1, 197–208). On the plane
%they are considered in Szurek-Wisniewski, Fano bundles of rank 2 on surfaces, Compositio Math.  {\bf 76}
%(1990), 295–305. See Proposition (2.6), and at the end of the discussion the fact that the variety
%of jumping lines should be a nonsingular cubic curve. Actually this is an instanton bundle! 

\begin{theorem}
The exists an irreducible  family of $7\times 8$ matrices of linear forms in four variables, of constant 
rank $6$, whose associated vector bundles are parametrized by the open subset 
of bundles $\cG$  inside the moduli
space of mathematical instantons of charge $3$ and Atiyah-Rees invariant $0$ on $\PP^3$, such that 
\begin{enumerate}
    \item $\cG$ has natural cohomology,
    \item  $\cG (2)$ is globally generated.
\end{enumerate}
\end{theorem}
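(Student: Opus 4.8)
The plan is to read the theorem as an identification of two $21$-dimensional parameter spaces, the nets of quadrics and the relevant open part of the instanton moduli, and to make the correspondence explicit in both directions. The forward direction is essentially already in hand. For a general seven-dimensional $P\subset S^2V^\vee$ the bundle $\cE_P$ is an ordinary Drézet bundle with $\cE_P^\vee(1)$ globally generated, and the self-dual sequence produces the rank two bundle $\cG_P$ of charge three together with the resolution
\[
0\ra \cO_{\PP^3}(-5)\ra \cO_{\PP^3}(-3)^{\oplus 7}\ra \cO_{\PP^3}(-2)^{\oplus 8}\ra \cG_P\ra 0,
\]
whose middle arrow is the matrix of linear forms. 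First I would record that $\cG_P\in\cM_0(3)$, its Atiyah--Rees invariant vanishing because $h^1(\cG_P(-2))=0$; that $\cG_P(2)$ is globally generated, which is exactly the surjectivity in the dual sequence $0\ra\cE_P(-1)\ra\cO_{\PP^3}^{\oplus 8}\ra\cG_P(2)\ra 0$ and hence equivalent to the global generation of $\cE_P^\vee(1)$; and that $\cG_P$ has natural cohomology, obtained by feeding the vanishings of Lemma \ref{h1} and the resolution above into the computation of $h^i(\cG_P(k))$ for all $i,k$.

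The core of the proof is the converse. Given $\cG$ in the open subset of $\cM_0(3)$ defined by the two conditions, natural cohomology forces $h^0(\cG(2))=\chi(\cG(2))=8$ together with the vanishing of the higher cohomology of $\cG(2)$, so its global generation yields an exact sequence $0\ra\cE\ra\cO_{\PP^3}^{\oplus 8}\ra\cG(2)\ra 0$ with $\cE$ of rank six. I would then prove that $\cE(1)$ is a Drézet bundle, equivalently that the minimal free resolution of $\cG$ is exactly the resolution displayed above. This is where natural cohomology is indispensable: it determines every $h^i(\cG(k))$ from the Euler characteristic, hence pins down all the graded Betti numbers and excludes any spurious generator or syzygy; combined with the self-duality $\cG\simeq\cG^\vee$, this recovers the resolution of \cite[Proposition 4.2]{schreyer}. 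Once it is established, the middle arrow is a $7\times 8$ matrix of linear forms of constant rank six, since its image is a rank six subbundle of $\cO_{\PP^3}(-2)^{\oplus 8}$, with kernel the line bundle $\cO_{\PP^3}(-5)$ and cokernel the locally free $\cG$. Moreover the Drézet presentation of $\cE(1)$ recovers the net $P^\perp\subset S^2V$ from its first map, so $P$ is determined by $\cG$ and the assignment $P\mapsto\cG_P$ is injective.

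Irreducibility and the precise parametrization then follow from dimensions. The space $Gr(7,S^2V^\vee)\simeq Gr(3,S^2V)$ of nets and the moduli space $\cM_0(3)$ both have dimension $21$, and by \cite{MR611278} the latter is irreducible, smooth and rational. The $PGL(V)$-equivariant map $P\mapsto\cG_P$ is defined on the irreducible open locus where $\cE_P$ is ordinary and $\cE_P^\vee(1)$ is globally generated, and it is injective by the reconstruction above, hence dominant; its image is an open dense subset of $\cM_0(3)$, which by the two directions coincides with the locus of $\cG$ having natural cohomology and $\cG(2)$ globally generated. Spreading the tautological matrices over this locus and recording the $GL_7\times GL_8$ choice of bases, which contributes irreducible fibres, produces the asserted irreducible family of $7\times 8$ matrices of constant rank six.

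The main obstacle is the resolution step in the converse: proving that the two open conditions alone force the minimal free resolution of an arbitrary such $\cG$ to take the precise form above, with no extra terms. Everything else is cohomological bookkeeping and a dimension count, but this step requires turning natural cohomology into complete control of the Betti table, and it is what makes the uniform identification with a Drézet bundle $\cE_P$, and hence with a net of quadrics, valid over the whole open subset.
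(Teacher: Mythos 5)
Your overall architecture is sound, and it is worth noting that it diverges from the paper's actual proof: the paper disposes of the theorem in one line by citing Lemme 1.1, Proposition 1.1.1 and sections 1.3--1.4 of \cite{gruson-skiti}, which establish precisely the ``if and only if'' between an instanton $\cG$ giving a constant rank matrix and $\cG$ having natural cohomology with $\cG(2)$ globally generated. Your forward direction (assembling the resolution from the preceding lemmas and reading off natural cohomology and global generation of $\cG_P(2)$) and your irreducibility argument (injectivity of $P\mapsto\cG_P$ plus the equality of the $21$-dimensional parameter counts and the irreducibility of $\cM_0(3)$ from \cite{MR611278}) are both fine and consistent with what the paper sets up.

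The genuine gap is exactly where you locate it, but your proposed route to close it does not work as stated. You claim that natural cohomology ``determines every $h^i(\cG(k))$ from the Euler characteristic, hence pins down all the graded Betti numbers.'' Knowing all cohomology dimensions of all twists is knowledge of Hilbert functions, and Hilbert functions do not determine graded Betti numbers: one must control the ranks of the multiplication maps. Concretely, to show that the kernel $\cE(-1)$ of $\cO_{\PP^3}^{\oplus 8}\to\cG(2)$ is (a twist of) a Drézet bundle, you need $h^0(\cE)=7$, equivalently $h^1(\cE)=0$, equivalently surjectivity of the multiplication map $H^0(\cG(2))\otimes V^\vee\to H^0(\cG(3))$, a map from a $32$-dimensional space onto a $25$-dimensional one. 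This surjectivity is \emph{not} a consequence of Castelnuovo--Mumford regularity here: natural cohomology forces $h^1(\cG(1))=-\chi(\cG(1))=1\neq 0$, so $\cG(2)$ is only $1$-regular, and the standard regularity argument for surjectivity of multiplication in the lowest degree is unavailable. You would still need to rule out extra generators of $\Gamma_*(\cG)$ in degree one above the bottom, and then separately identify the syzygy bundle of the resulting seven sections with $\cO_{\PP^3}(-5)$ (here self-duality of $\cG$ and the Chern class count do help, but only after the Betti numbers in the first two steps are fixed). This is precisely the content that the paper outsources to Gruson--Skiti, so as written your converse is an assertion of the theorem rather than a proof of it; either supply the surjectivity argument directly or cite the external result as the paper does.
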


 Recall that a vector bundle $\cG $ on $\PP ^n$ is said to have {\ em natural cohomology} if for any integer $k$ at most one cohomology group $H^{i}(\PP^n,
 \cG (k))$ is non-zero for $0\le i \le n$. 
 
\proof It follows from Lemme 1.1, Proposition 1.1.1 and sections 1.3 and 1.4 in
\cite{gruson-skiti} that an instanton bundle $\cG$ gives a matrix of constant rank 
if and only if it has natural cohomology and $\cG(2)$ is generated by global sections.\qed 

\medskip 

We   summarize the relations between the instanton bundle $\cG_P$, the rank six bundle $\cE_P$ and the matrix $M$ of linear forms, by the following diagram, where the diagonal short exact sequences are exact:

$$\xymatrix{ \cO_{\PP^3}(-2) \ar@{->}[rd]^P & & & &\cG_P(3) \\
 &  \cO_{\PP^3}^{\oplus 7}  \ar@{->}[rd]\ar@{->}[rr]^{M} & & 
 \cO_{\PP^3}(1)^{\oplus 8} \ar@{->}[ru] & \\
  & & \cE_P  \ar@{->}[ru] & & 
}$$

%\begin{remark}
%Instantons have already been used in \cite{MR3107531} to construct matrices of constant rank 
%skew-symmetric $4$-dimensional spaces of constant corank two.
%In particular they recover, generalize and explain Westwick's example.  See also \cite{MR3107531}.
%\red{this now duplicates the history}
%\end{remark}

\medskip 
Since $P$ has codimension three, 
the initial data for the construction is essentially a tensor in $
\CC^3\otimes S^2\CC^4$, from which it is easy construct the $7\times 8$ matrix explicitly. 
In fact if $N=P^\perp\subset S^2V$  is a net of quadrics,  then 
$$H^0(\cE)=S^2V/N, \qquad H^0(\cE^\vee(1))=Ker (N^\perp \otimes V^\vee\rightarrow S^3V^\vee), $$
and the natural duality between $N^\perp$ and $S^2V/N$ yields the desired matrix.

\medskip\noindent {\it Remark.}
%Note finally that according to \cite{MR2058611}, this data is essentially the same (up to the action of $SL_3\times GL_4$), as a genus 
%three curve with an even theta-characteristic.
The study of nets of quadrics in $\PP^3$ is  a  classical topic, see for example
\cite{wall-theta}. The set of singular quadrics in such a net is parametrized 
by a plane quartic with a symmetric determinantal representation. It was already  known to Hesse that a general plane quartic admits $36$ non-equivalent such determinantal representations, corresponding to its even theta characteristics. 
This means that a general plane quartic is the characteristic curve of $36$ 
distinct nets of quadrics.

%\medskip\noindent {\it Question.}
%Can we write down explicit matrices? We can choose a random net of quadrics. 
%The only problem is to check that it is generic enough to  ensure that 
%$\cE_P^\vee(1)$ is generated by global sections. 

%\medskip\noindent {\it Example 12.} 
\begin{example}
Consider the net of quadrics 
$$N=\langle e_0^2-e_1^2+e_2^2, 
e_1^2+e_2^2+e_3^2, e_0e_2+e_0e_3+e_1e_3\rangle \subset S^2V.$$ The characteristic 
curve is the plane quartic of  equation 
$$\lambda^4-2\lambda^2\mu^2+\mu\nu (\mu^2-\nu^2)=0,$$
a smooth curve.
The dual space of quadrics is 
$$P=\langle x_0^2+x_1^2-x_3^2, x_0^2-x_2^2+x_3^2, x_0x_1, x_1x_2, x_2x_3,
x_0(x_2-x_3), (x_0-x_1)x_3\rangle .$$
Denote this basis of $P$ by $Q_0, Q_1, A, B, C, D, E$. The eight syzygies are
$$x_2A=x_0B, \; x_3B=x_1C, \; x_1D=(x_2-x_3)A, \; x_2E=(x_0-x_1)C,$$
$$x_0(Q_0-Q_1)=x_1A-x_0C+(x_2+2x_3)D, $$
$$x_1Q_1=x_0(A+C)-x_2B-x_3(D+E),$$
$$x_2Q_0=x_3A+x_1B-x_3C+x_0(D+E), $$
$$x_3(Q_0+Q_1)=(2x_3+x_2)A-x_2C-x_1D+(2x_0-x_1)E.$$
We get the following $7\times 8$ matrix of constant rank $6$:
$$\begin{pmatrix}
0&0&x_2&-x_0&0&0&0 \\
0&0&0&x_3&-x_1&0&0 \\
0&0&x_3-x_2&0&0&x_1&0 \\
0&0&0&0&x_1-x_0&0&x_2 \\
x_0&-x_0&-x_1&0&x_0&-x_2-2x_3&0 \\
0&x_1&-x_0&x_2&-x_0&x_3&x_3 \\
x_2&0&-x_3&-x_1&x_3&-x_0&-x_0 \\
x_3&x_3&-x_2-2x_3&0&x_2&x_1&2x_0+x_1 
\end{pmatrix}$$
\end{example}

\bigskip Our final classification is the following:

\begin{theorem}
Given an indecomposable matrix of linear forms of constant rank $6$, 
the associated 
bundle $\cE$ must be, up to switching with $\cE^\vee(1)$, of one the following types:
\begin{enumerate}
    \item ($c_1=1$) The quotient bundle $Q$ on $\PP^6$.
    \item ($c_1=2$) A Drézet bundle on $\PP^3$ defined by a net of quadrics.
    \item ($c_1=3$) The second exterior power $\wedge^2Q$ 
    on $\PP^4$. 
    \item ($c_1=3$) A Drézet bundle on $\PP^2$ defined by the cubics 
    apolar to a plane quartic. 
    \item ($c_1=3$)  A non-trivial extension of the quotient bundle $Q$ on $\PP^2$ 
    by a rank 4 non-generic Drézet bundle with $c_1=2$.
\end{enumerate}
\end{theorem}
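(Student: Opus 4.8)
The plan is to assemble, according to the value of $c_1(\cE)$, the case-by-case analyses carried out throughout Sections 4 and 5. Since $c_1(\cE)+c_1(\cE^\vee(1))=6$ and transposing the matrix exchanges the two bundles, I would first reduce to $c_1(\cE)\le 3$, so that only the three values $c_1=1,2,3$ need examination. In every case indecomposability forces the rank bound $e\ge n+2$---otherwise the uniformity Proposition makes $\cE$ a sum of homogeneous bundles---and global generation of $\cE^\vee(1)$ forces $h^0(\cE(-1))=0$; these two constraints repeatedly prune the classification lists.

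For $c_1(\cE)=1$ the conclusion is immediate: by \cite[Theorem 2.4]{MR954659} the only indecomposable possibility is $\cE=Q$, and the rank-six requirement forces the ambient space to be $\PP^6$, giving case (1). For $c_1(\cE)=2$, Propositions \ref{classification-c=2} and \ref{steiner-c=2} leave only a Dr\'ezet bundle $\cE_P$ on $\PP^n$ with $2\le n\le 4$; the value $n=2$ is impossible because $\dim S^2\CC^3=6<e+1$, and $n=4$ is excluded by the section-count Lemma. This pins us to $\PP^3$, where the global-generation Proposition (proved by a stratification of $S^3V$ and a dimension count) shows $\cE_P^\vee(1)$ is generated for general $P$, and identifies the resulting bundles with mathematical instantons of charge three---case (2).

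The substance lies in $c_1(\cE)=3$, where I would invoke the Proposition summarizing the globally-generated classification, splitting $\cE$ into the Steiner, Dr\'ezet, mixed, or $\wedge^2Q$ on $\PP^4$ types. The Steiner subcase collapses to the decomposable $Q^{\oplus 3}$ by Proposition \ref{miroroigmarchesi}; the $\wedge^2Q$ possibility is exactly case (3); and the Dr\'ezet subcase is settled by Lemma \ref{mac2}, whose linkage-plus-Macaulay argument discards $n=3,4$ and, on $\PP^2$, produces precisely the Pfaffian bundles apolar to a plane quartic---case (4). The mixed subcase demands the most care: on $\PP^2$ it yields a non-trivial extension of $Q$ by a rank-four Dr\'ezet bundle, and the crucial point is that, although the rank-four summand alone gives only bounded rank, the non-triviality of the extension restores constant rank---case (5); on $\PP^3$ and $\PP^4$ the same bundles are discarded by restricting to a hyperplane and checking $h^0(\cE^\vee(1))\le 8<6+n$. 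Finally the Tango subcase is eliminated because $h^0(\cE^\vee)>0$ whenever $e>6$, while $e=6$ forces the splitting $\wedge^2Q\oplus Q$. Collecting cases (1)--(5) gives the statement. The main obstacle throughout is the mixed case: it is the only point where an extension of two globally generated bundles must be verified by hand to be itself globally generated and of constant rank, a subtlety the remainder of the classification does not encounter.
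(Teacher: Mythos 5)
Your proposal is correct and follows the paper's own route essentially step for step: the reduction to $c_1(\cE)\le 3$ by transposition, the rank bound $e\ge n+2$ and the vanishing $h^0(\cE(-1))=0$ as pruning tools, and then the same case division (quotient bundle on $\PP^6$; nets of quadrics on $\PP^3$ via Propositions \ref{classification-c=2} and \ref{steiner-c=2} with $n=2,4$ excluded; and for $c_1=3$ the Steiner/Dr\'ezet/mixed/$\wedge^2Q$/Tango split resolved exactly as in Section 5.1). You also correctly single out the mixed case on $\PP^2$ as the one place where global generation of the extension must be verified directly, which is indeed where the paper expends the most effort.
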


%\medskip\noindent {\bf Example.}
%Other example related to the Tango bundle, whose existence seems due to the fact that $\wedge^{n-1}\cQ$
%can be generated by only $2n-1$ sections (in general one needs $n+r$ sections, and here $r=n$). Can be 
%interpreted as the vanishing of a characteristic class, namely
%$$s_n(\wedge^{n-1}\cQ)=0.$$ Does such a vanishing occur in other situations? 
%Seems to be the case for $\wedge^2\cQ$ as well? Probably gives the same bundle. Note that $c_1(\wedge^2\cQ)=n-1$.
%What about $\wedge^k\cQ$ for $k>2$, or other homogeneous bundles?

\section{Matrices of constant rank via evaluation morphisms}

Our examples suggest a general procedure to construct spaces of 
matrices of bounded rank from a vector bundle $\cF$ on $\PP^n=\BP V$  generated by global sections.
Such $\cF$ will be the cokernel sheaf associated to  the corresponding space of  matrices
of bounded rank $e$. Such a bundle is generated by any generic space of sections of dimension at least $n+f$, where $f$ is the rank of $\cF$. So   consider $\Sigma_{e+f}\subset H^0(\PP^n, \cF)$ some generating space
of sections, of dimension $e+f$. The kernel of the evaluation map is a vector bundle 
$\cK_e$ of rank $e$, 
fitting into the exact sequence 
$$0\rightarrow \cK_e\rightarrow \Sigma_{e+f}\otimes\cO_{\PP^n}\rightarrow \cF\rightarrow 0.
$$
If $\cE:=\cK_e(1)$ has enough global sections it will give rise 
to a space of bounded rank $e$. If $\cE$ is generated by  global sections,   it will give rise to 
  a space of constant rank $e$. The composition 
$$H^0(\PP^n,\cK_e(1))\otimes \cO_{\PP^n}\rightarrow \cK_e(1)\rightarrow \Sigma_{e+f}\otimes\cO_{\PP^n}(1)$$
yields a space of matrices of linear forms of rank at most $e$, and size $a_e\times (e+f)$ for $a_e=h^0(\cK_e(1))$.
This number is potentially large:  $$ a_e\ge (n+1)(e+f)-h^0(\cF(1)),$$
and equality holds when $h^1(\cF(1))=0$,  which   is 
expected to be a condition for $\cK_e(1)$ to have nontrivial sections. We will   impose
$a_e>e$ so that the rank being bounded by $e$ is a non-trivial condition. This leads  to the conditions 
$$
\frac 1n(h^0(\cF(1))-(n+1)f)<r\leq h^0(\cF)-f.
$$
 
 \medskip
For example, suppose $\cF=Q^\vee(1)$ is the twisted dual of the quotient bundle, so $f=n$. 
If we take all the sections of $\cF$, that is for $e=\binom{n+1}{2}$, then 
$\cK_e=\wedge^2Q^\vee $ and $\cK_e(1)=\wedge^{n-2}Q$   
 is generated by global sections. In general,  $ \Sigma_{e+n}$ is a subspace 
of $\wedge^2V^\vee$, and $\cK_e$ is 
a subbundle of $\wedge^2Q^\vee$, with trivial quotient. The expected conditions for $\cK_e$ 
to admit global sections become
$$\frac{(n-1)(n+1)}{3}<e\le \binom n2.
$$
One expects that $\cK_e(1)$ remains generated by global sections when $e$ is close to $\binom{n}{2}$. How close? We have a diagram

$$\begin{CD}
    @. Q^\vee \otimes \Sigma_{e+n} @>g>> \cG \\
   @. @VVV        @VVV\\
H^0(\cK_e(1))\otimes\cO_{\PP^n} @>>> V^\vee\otimes \Sigma_{e+n} \otimes\cO_{\PP^n}
@>>> S_{2,1}V^\vee\otimes\cO_{\PP^n} \\
   @VVfV        @VVV @VVV \\
  \cK_e(1) @>>> \cO(1) \otimes \Sigma_{e+n} @>>> Q^\vee(2) \\
%    @. @VVV        @VVV\\
%      @. 0     @.   0\\
\end{CD}$$

\smallskip\noindent
where $\cG$, which has rank $\frac {n(n^2+3n-1)}3$,  is defined by the rightmost 
vertical exact sequence.

By the snake lemma, if the sequence of trivial bundles is exact, $f$ is 
surjective exactly when $g$ is surjective, or equivalently when $g^t$ is injective. 
In order to decide whether it is, note that there is an exact sequence 
$$0\rightarrow \cG^\vee\rightarrow \wedge^2V\otimes Q\rightarrow \wedge^3Q\rightarrow 0.
$$
To see this, over a point $[v]$ choose a splitting $V=H\op \ell$ as before with $\ell=\BC v$.
then over the point $\cG\isom S_{21}H^\vee\op S^2H^\vee\ot \ell^\vee$, and thus
$\cG^\vee$ maps to $\La 2 V\ot Q \isom \cG^\vee\op \La 3 H$, and the latter term is
the fiber of $\La 3Q$.

  The above discussion implies:

\begin{lemma} The evaluation morphism $H^0(\cK_e(1)) \otimes\cO_{\PP^n} \rightarrow \cK_e(1)$ is surjective exactly 
when the composition 
$$ \Sigma_{e+n}^\perp \otimes Q\hookrightarrow \wedge^2V\otimes Q\rightarrow \wedge^3Q$$
is everywhere injective.
\end{lemma}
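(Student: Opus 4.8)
The plan is to take as given the two facts already established in the discussion above: first, that once the middle row of trivial bundles is exact the snake lemma identifies $\mathrm{coker}(f)$ with $\mathrm{coker}(g)$, so that the evaluation map $f$ is surjective if and only if $g\colon Q^\vee\otimes\Sigma_{e+n}\to\cG$ is surjective; and second, the exact sequence $0\to\cG^\vee\to\wedge^2V\otimes Q\to\wedge^3Q\to 0$, together with the fiberwise description of its terms obtained from a splitting $V=H\oplus\ell$. Since $g$ is a morphism of vector bundles, it is surjective exactly when its transpose $g^\vee\colon\cG^\vee\to Q\otimes\Sigma_{e+n}^\vee$ is injective on every fiber. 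So the whole content of the lemma is to identify this transpose.

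First I would pin down $g$ itself. By construction it is the restriction, to the subbundle $Q^\vee\otimes\Sigma_{e+n}\subset V^\vee\otimes\Sigma_{e+n}\otimes\cO_{\PP^n}$, of the constant morphism $V^\vee\otimes\Sigma_{e+n}\to S_{2,1}V^\vee$ coming from the inclusion $\Sigma_{e+n}\hookrightarrow\wedge^2V^\vee$ followed by the $GL(V)$-equivariant projection $V^\vee\otimes\wedge^2V^\vee\twoheadrightarrow S_{2,1}V^\vee$ (the complement of the $\wedge^3V^\vee$ summand). Dualizing the defining commutative square, $g^\vee$ is then the morphism induced on the quotient bundles $S_{2,1}V\otimes\cO_{\PP^n}\twoheadrightarrow\cG^\vee$ and $V\otimes\Sigma_{e+n}^\vee\otimes\cO_{\PP^n}\twoheadrightarrow Q\otimes\Sigma_{e+n}^\vee$ by the constant map $S_{2,1}V\hookrightarrow V\otimes\wedge^2V\twoheadrightarrow V\otimes\Sigma_{e+n}^\vee$.

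The key step is to match this with the inclusion $\cG^\vee\hookrightarrow\wedge^2V\otimes Q$. Here I would introduce the morphism $\Theta\colon S_{2,1}V\otimes\cO_{\PP^n}\to\wedge^2V\otimes Q$ obtained by composing $S_{2,1}V\hookrightarrow V\otimes\wedge^2V$ with the contraction of the $V$ factor by $V\otimes\cO_{\PP^n}\to Q$, and check that $\Theta$ realizes the quotient map $S_{2,1}V\otimes\cO_{\PP^n}\twoheadrightarrow\cG^\vee$ followed by the inclusion into $\wedge^2V\otimes Q$; this is exactly the compatibility of the two descriptions of $\cG^\vee$ (as the cokernel of $Q(-2)\to S_{2,1}V\otimes\cO_{\PP^n}$ and as the kernel of $\wedge^2V\otimes Q\to\wedge^3Q$) which the splitting $V=H\oplus\ell$ makes fiberwise transparent. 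Granting this, and noting that projecting the $\wedge^2V$ factor onto $\Sigma_{e+n}^\vee$ and contracting the $V$ factor to $Q$ act on different tensor factors and hence commute, $g^\vee$ is identified with the composite $\cG^\vee\hookrightarrow\wedge^2V\otimes Q\twoheadrightarrow(\wedge^2V/\Sigma_{e+n}^\perp)\otimes Q=\Sigma_{e+n}^\vee\otimes Q$.

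Finally I would read off the kernel. The kernel of $\wedge^2V\otimes Q\to\Sigma_{e+n}^\vee\otimes Q$ is precisely $\Sigma_{e+n}^\perp\otimes Q$, so at each point $[v]$ the kernel of $g^\vee$ equals $\cG^\vee_{[v]}\cap(\Sigma_{e+n}^\perp\otimes Q_{[v]})$; since $\cG^\vee$ is by definition the kernel of $\wedge^2V\otimes Q\to\wedge^3Q$, this intersection is exactly the kernel of the restriction $\Sigma_{e+n}^\perp\otimes Q\to\wedge^3Q$. Hence $g^\vee$ is fiberwise injective if and only if the composition $\Sigma_{e+n}^\perp\otimes Q\hookrightarrow\wedge^2V\otimes Q\to\wedge^3Q$ is everywhere injective, which with the snake-lemma reduction of the first paragraph yields the statement. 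The main obstacle is the compatibility claim for $\Theta$ in the third paragraph, namely the honest verification that the two realizations of $\cG^\vee$ agree under $\Theta$; this is the representation-theoretic bookkeeping that the fiberwise splitting $V=H\oplus\ell$ is designed to settle.
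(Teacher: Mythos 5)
Your proof is correct and takes essentially the same route as the paper: the snake-lemma reduction of surjectivity of the evaluation map to surjectivity of $g$ (equivalently fiberwise injectivity of $g^\vee$), the identification of $\cG^\vee$ as the kernel of $\wedge^2V\otimes Q\to\wedge^3Q$ via the splitting $V=H\oplus\ell$, and the observation that $\ker g^\vee=\cG^\vee\cap(\Sigma_{e+n}^\perp\otimes Q)$ coincides with the kernel of $\Sigma_{e+n}^\perp\otimes Q\to\wedge^3Q$. You merely spell out the identification of $g^\vee$ with the restricted projection, which the paper leaves implicit in the phrase ``the above discussion implies.''
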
 

\begin{theorem} For $n\geq 6$ and  $\frac{n(n-1)}2\geq e> \frac{(n-1)^2}2$,  a general
subspace $\Sigma_{e+n}\subset \La 2 V$ of dimension $e+n$ defines a matrix of linear forms
of constant rank $e$ of size  $a_e\times b_e$, where
 $$
 a_e=e+n, \qquad b_e=e(n+1)-2\binom{n+1}3.
$$
The corresponding rank $e$ vector bundle $\cE$ has $c_1(\cE)=e-n+1$.
\end{theorem}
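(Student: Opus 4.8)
The plan is to realize the bundle of the construction explicitly, read off its invariants, split the constant-rank requirement into the two global-generation statements for $\cE$ and $\cE^\vee(1)$, observe that one of them is automatic, and concentrate on the other through a genericity estimate. Throughout I take $\cF=Q^\vee(1)$, so $f=n$ and $H^0(\cF)=\wedge^2V^\vee$; a general $\Sigma_{e+n}$ generates $\cF$, and I set $\cE=\cK_e(1)$ for the kernel bundle $\cK_e$ in $0\to\cK_e\to\Sigma_{e+n}\otimes\cO_{\PP^n}\to Q^\vee(1)\to0$.

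First I would compute the invariants. Since $c_1(Q^\vee(1))=n-1$, the defining sequence gives $c_1(\cK_e)=1-n$, hence $c_1(\cE)=e-n+1$. Twisting the sequence yields $0\to\cE\to\Sigma_{e+n}\otimes\cO_{\PP^n}(1)\to Q^\vee(2)\to0$, and dualizing gives $0\to Q(-1)\to\Sigma_{e+n}^\vee\otimes\cO_{\PP^n}\to\cK_e^\vee\to0$. From the first sequence, $h^0(\cE)=(e+n)(n+1)-h^0(Q^\vee(2))=(e+n)(n+1)-\tfrac{n(n+1)(n+2)}{3}=b_e$, provided $H^1(\cE)=0$, i.e. that the multiplication $\Sigma_{e+n}\otimes V^\vee\to S_{2,1}V^\vee=H^0(Q^\vee(2))$ is surjective for general $\Sigma_{e+n}$; this is a maximal-rank statement which holds in our range because $e>\tfrac{n(n-1)}{3}$, and can be obtained by the method of Theorem \ref{drezet-generation}. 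From the second sequence, $H^0(Q(-1))=H^1(Q(-1))=0$ give $h^0(\cE^\vee(1))=h^0(\cK_e^\vee)=e+n=a_e$. This fixes the size and the first Chern class.

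Second comes the global generation. The bundle $\cE^\vee(1)=\cK_e^\vee$ is a quotient of the trivial bundle $\Sigma_{e+n}^\vee\otimes\cO_{\PP^n}$, hence automatically globally generated, so only the generation of $\cE=\cK_e(1)$ is at issue. For this I invoke the preceding Lemma: $\cE$ is generated by global sections if and only if the composite $\Sigma_{e+n}^\perp\otimes Q\hookrightarrow\wedge^2V\otimes Q\to\wedge^3Q$ is injective at every point. Writing $W=\Sigma_{e+n}^\perp$, with $\dim W=m=\binom{n}{2}-e$, and using that $\cG^\vee=\ker(\wedge^2V\otimes Q\to\wedge^3Q)$, this is the same as asking that the subbundle $W\otimes Q$ meet $\cG^\vee$ only in the zero section. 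Thus the whole theorem reduces to the statement that, for a general $W$ with $\dim W=m<\tfrac{n-1}{2}$ (which is exactly $e>\tfrac{(n-1)^2}{2}$), the bundle map $\phi_W\colon W\otimes Q\to\wedge^3Q$ is fibrewise injective on all of $\PP V$; note $m\ge0$ is $e\le\binom{n}{2}$.

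The core is this last injectivity, and it is the main obstacle. At a point $[v]$ the map is $W\otimes\bar V\to\wedge^3\bar V$, $\omega\otimes\bar w\mapsto\bar\omega\wedge\bar w$, where $\bar V=Q_{[v]}=V/\langle v\rangle$ and $\bar\omega$ is the image of $\omega$ in $\wedge^2\bar V$. I would form the incidence variety $\mathcal Z$ of triples $(W,[v],[\xi])$ with $W\in Gr(m,\wedge^2V)$, $\xi\in W\otimes Q_{[v]}$ nonzero, and $\phi_W(\xi)=0$, and show its projection to $Gr(m,\wedge^2V)$ is not dominant by bounding $\dim\mathcal Z<\dim Gr(m,\wedge^2V)$. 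The natural approach is to fibre $\mathcal Z$ over $\PP V$ and stratify the kernel elements $\xi$ by the rank $r$ of their first tensor factor: a rank-$r$ element forces an $r$-dimensional subspace into $W$, so the $W$ containing it form a sub-Grassmannian of dimension $(m-r)\bigl(\binom{n+1}{2}-m\bigr)$, while $\phi_W(\xi)=0$ and the rank constraint cut down the admissible $([v],[\xi])$. Already the rank-one stratum yields the necessary condition that $W$ contain no $2$-form of rank $\le4$: such an $\alpha$ makes $\psi_v(\alpha)$ decomposable, hence $\alpha\otimes\bar a$ a nonzero kernel element, for every $[v]$ in its support. The genuine difficulty is that $\phi_W$ is very far from a generic bundle map — its kernel sheaf is the fixed, highly structured bundle $\cG^\vee$, and the induced family $\{\,\psi_v(W)\,\}_{[v]}$ is constrained by the rank profile of the forms in $W$ — so the expected-codimension heuristic for degeneracy loci does not apply and the higher first-factor strata must be estimated directly. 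Verifying that the binding stratum contributes dimension strictly below $\dim Gr(m,\wedge^2V)$ is precisely where $e>\tfrac{(n-1)^2}{2}$, and the hypothesis $n\ge6$, enter; the remaining points (maximal rank of the multiplication map and emptiness of the low-rank locus for general $W$) are routine Grassmannian incidence computations.
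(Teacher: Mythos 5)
Your setup is correct and matches the framework the paper establishes just before the theorem: the identification $\cE=\cK_e(1)$, the values of $c_1(\cE)$, $a_e$ and $b_e$, the observation that $\cE^\vee(1)=\cK_e^\vee$ is automatically globally generated as a quotient of a trivial bundle, and the reduction (via the preceding lemma) to the fibrewise injectivity of $\Sigma_{e+n}^\perp\otimes Q\to\wedge^3Q$ are all right. But the heart of the theorem is precisely that injectivity statement for a general $\Sigma_{e+n}^\perp$ of dimension $m<\frac{n-1}{2}$, and there your proposal stops: you set up the incidence variety, handle the rank-one stratum (recovering the condition that $W$ contain no $2$-form of rank $\le 4$, which is the paper's $\ell=1$ case), and then explicitly concede that the higher strata ``must be estimated directly'' without saying how. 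Announcing that this is where the hypotheses $n\ge 6$ and $e>\frac{(n-1)^2}{2}$ enter is not a proof that they suffice.

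The missing idea is a structural constraint that makes the count close. Suppose $\xi=\omega_1\otimes\bar u_1+\cdots+\omega_r\otimes\bar u_r$ is a nonzero kernel element at $[v]$, with $\omega_1,\dots,\omega_r$ spanning an $r$-plane $U\subset W$ and $\bar u_1,\dots,\bar u_r$ independent in $Q_{[v]}$ (the general element of your rank-$r$ stratum). Then $\omega_1\wedge u_1+\cdots+\omega_r\wedge u_r=v\wedge\sigma$ for some $\sigma\in\wedge^2V$, and expanding in a basis of $V$ adapted to $A:=\langle u_1,\dots,u_r,v\rangle$ one sees that no $\omega_i$ can involve a term $w_i\wedge w_j$ with both factors outside $A$; hence the \emph{entire} subspace $U$ is contained in $A\wedge V$ for an $(r+1)$-plane $A\subset V$. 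This is far stronger than ``$W$ contains the $r$-plane $U$'': it replaces your intractable problem of bounding the rank-$r$ stratum of the special kernel bundle $\cG^\vee$ by an elementary incidence count. The bad $W$'s are then dominated by the total space of a tower of Grassmann bundles — choose $A\in Gr(r+1,V)$, then $U\in Gr(r,A\wedge V)$, then $W\supset U$ — whose dimension is readily compared with $\dim Gr(m,\wedge^2V)$ and falls short of it exactly in the stated range (the case $r=1$, where the generic count is tight at $n=6$, being covered by the rank-$\le 4$ observation you already made). Without this containment $U\subset A\wedge V$, your stratification by tensor rank does not yield a dimension bound, as you yourself acknowledge; so the proposal as written has a genuine gap at the decisive step.
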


\begin{proof}
Set $\ell=\binom{n+1}2-(n+e)$, the codimension of $\Sigma=\Sigma_{e+n}$, and let
    $\Theta_\ell=\Sigma_{k+n}^\perp$, which is  a generic $\ell$-dimensional subspace of $\wedge^2V$. 

First consider the case $\ell=1$. 
Suppose $\Theta_1\otimes Q\rightarrow \wedge^3Q$ is not injective at some point $[v]$, and that $\omega\otimes \bar{u}$
is sent to zero for a generator $\omega$ of $\Theta_1$ and a vector $u\in V$, not colinear to $v$. This means that 
$\omega\wedge u$ belongs to the kernel of the projection $\wedge^3V\rightarrow \wedge^3Q$, that is $v\wedge(\wedge^2V)$. But then, since $u$ and $v$ are independent, this implies that $\omega=u\wedge\alpha+v\wedge\beta$ for some vectors $\alpha,
\beta$. In particular $\omega$ has rank at most four, so $\Theta_1$ cannot be generic for $n\ge 6$. 

Now consider the general case, and suppose that at some point $[v]$, an element $\omega_1\otimes\bar{u}_1+\cdots + \omega_\ell\otimes\bar{u}_\ell$ of $\Theta_\ell\otimes Q$ is sent to zero in  $\wedge^3Q$. Since $\ell<n$ we 
may suppose $\omega_1, \ldots, \omega_\ell$ to be a basis of $\Theta_\ell$, and  $\bar{u}_1, \ldots , \bar{u}_\ell$
to be independent in $Q$ -- otherwise we are reduced to considering spaces of smaller dimension. As before,
this means that $\omega_1\wedge u_1+\cdots + \omega_\ell\wedge u_\ell=v\wedge \sigma$ for some $\sigma\in 
\wedge^2V$. Completing $u_1,\ldots ,u_\ell, v$ into a basis of $V$ by vectors $w_1,\ldots, w_m$, and decomposing
into this basis, we immediately see that $\omega_1, \ldots, \omega_\ell$ cannot contain any term of the form 
$w_i\wedge w_j$. In other words, 
$$\Theta_\ell \subset A_{\ell+1}\wedge V, \qquad A_{\ell+1}:=\langle u_1,\ldots ,u_\ell, v\rangle.$$
We claim this forces $\Theta_\ell$ to be non-generic, just by counting dimensions. Indeed, $\Theta_\ell$ 
must belong to a subvariety $Z$ of the Grassmannian $Gr(\ell,\wedge^2V)$ which is dominated by the total
space of the Grassmann bundle $Gr(\ell,A_{\ell+1}\wedge V)$ over $Gr(\ell+1,V)$, where $A_{\ell+1}$ denotes the
tautological bundle on the latter Grassmannian. Our conclusion holds as soon as $\dim Z<\dim Gr(\ell,\wedge^2V)$, 
and it is a straightforward to check that this is the case in the indicated range.
\end{proof}
 
%\medskip Actually the method yields a better result, and the conclusion should hold under a 
%condition of the form  $n-f\ge O(\sqrt{n})$ \red{$n-\ell \ge O(\sqrt{n})$}. 

%Consider the smallest bounded rank case:

%\begin{prop} \label{newr5tango} When $n=4$, $r=5$, one obtains a five dimensional space of $9\times 25$ matrices
%of bounded rank $5$. The rank drops on the disjoint union of two skew $\pp 1$'s in $\pp 4$.
%The sheaf $\cE$ has $c_1(\cE)=2$. \red{First chern class of the transpose sheaf denoted $\cF$ in EH??
%Can we prove this is new? basic?}
%\end{prop}

%We expect this to
%be true for $\Sigma_{k+n}$ when the diffence between the rank of $Q^\vee \otimes \Sigma_{k+n}$ and the 
%rank of $L$ is at least $n$, which amounts to 
%$$\frac{n^2+2}{3}\le k.$$

%\subsection{Skew-symmetric three-forms}
%In odd dimensions $n=2k+1$, a skew-symmetric tensor $\Omega\in\wedge^3\CC^n\subset \CC^n\otimes \CC^n\otimes \CC^n$
%defines families of skew-symmetric tensors of bounded rank. 
%For $k=3$ a generic tensor can be reduced to
%$$\Omega = e_{123}+e_{345}+e_{156}+e_{246}+e_{147}+e_{367}+e_{257}.$$
%For a generic $v$, $\Omega(v,.,.)$ has rank six and its kernel is generated by $v$. This shows that a corank one tensor is far from being determined by the corresponding birational map?

%\medskip Actually the method yields a better result, and the conclusion should hold under a 
%condition of the form  $n-\ell\ge O(\sqrt{n})$.

    \bibliographystyle{amsplain}

\bibliography{Lmatrix}

\def\cdprime{$''$} \def\cprime{$'$} \def\cprime{$'$} \def\cprime{$'$}
  \def\Dbar{\leavevmode\lower.6ex\hbox to 0pt{\hskip-.23ex \accent"16\hss}D}
  \def\cprime{$'$} \def\cprime{$'$} \def\cdprime{$''$} \def\cprime{$'$}
  \def\cprime{$'$} \def\Dbar{\leavevmode\lower.6ex\hbox to 0pt{\hskip-.23ex
  \accent"16\hss}D} \def\cprime{$'$} \def\cprime{$'$} \def\cprime{$'$}
  \def\cprime{$'$} \def\Dbar{\leavevmode\lower.6ex\hbox to 0pt{\hskip-.23ex
  \accent"16\hss}D} \def\cprime{$'$} \def\cprime{$'$}
\providecommand{\bysame}{\leavevmode\hbox to3em{\hrulefill}\thinspace}
\providecommand{\MR}{\relax\ifhmode\unskip\space\fi MR }
% \MRhref is called by the amsart/book/proc definition of \MR.
\providecommand{\MRhref}[2]{%
  \href{http://www.ams.org/mathscinet-getitem?mr=#1}{#2}
}
\providecommand{\href}[2]{#2}
\begin{thebibliography}{10}

\bibitem{anghel-coanda-manolache}
C.~Anghel, I.~Coand\u{a}, and N.~Manolache, \emph{Globally generated vector
  bundles with small {$c_1$} on projective spaces}, Mem. Amer. Math. Soc.
  \textbf{253} (2018), no.~1209, v+107.

\bibitem{anghel-manolache}
C.~Anghel and N.~Manolache, \emph{Globally generated vector bundles on {$\Bbb
  P^n$} with {$c_1=3$}}, Math. Nachr. \textbf{286} (2013), no.~14-15,
  1407--1423.

\bibitem{MR695915}
M.~D. Atkinson, \emph{Primitive spaces of matrices of bounded rank. {II}}, J.
  Austral. Math. Soc. Ser. A \textbf{34} (1983), no.~3, 306--315.

\bibitem{MR3107531}
A.~Boralevi, D.~Faenzi, and E.~Mezzetti, \emph{Linear spaces of matrices of
  constant rank and instanton bundles}, Adv. Math. \textbf{248} (2013),
  895--920.

\bibitem{MR2182426}
M.~C. Brambilla, \emph{Simplicity of generic {S}teiner bundles}, Boll. Unione
  Mat. Ital. Sez. B Artic. Ric. Mat. (8) \textbf{8} (2005), no.~3, 723--735.

\bibitem{MR1251956}
W.~Bruns and J.~Herzog, \emph{Cohen-{M}acaulay rings}, Cambridge Studies in
  Advanced Mathematics, vol.~39, Cambridge University Press, Cambridge, 1993.

\bibitem{MR0453723}
D.A. Buchsbaum and D.~Eisenbud, \emph{Algebra structures for finite free
  resolutions, and some structure theorems for ideals of codimension {$3$}},
  Amer. J. Math. \textbf{99} (1977), no.~3, 447--485.

\bibitem{chs}
I.~Coskun, J.~Huizenga, and G.~Smith, \emph{Stability and cohomology of kernel
  bundles on projective space}, arXiv:2204.10247.

\bibitem{Costa-Macias-MiroRoig}
L.~Costa, P.~Macias~Marques, and R.M.~Mar\'{\i}a Mir\'{o}-Roig, \emph{Stability
  and unobstructedness of syzygy bundles}, J. Pure Appl. Algebra \textbf{214}
  (2010), no.~7, 1241--1262.

\bibitem{MR776185}
E.~D. Davis, A.~V. Geramita, and F.~Orecchia, \emph{Gorenstein algebras and the
  {C}ayley-{B}acharach theorem}, Proc. Amer. Math. Soc. \textbf{93} (1985),
  no.~4, 593--597.

\bibitem{MR1240599}
I.~Dolgachev and M.~Kapranov, \emph{Arrangements of hyperplanes and vector
  bundles on {$\mathbf{P}^n$}}, Duke Math. J. \textbf{71} (1993), no.~3,
  633--664.

\bibitem{drezet-uniform}
J-.M. Dr\'ezet, \emph{Exemples de fibr\'es uniformes non homog\`enes sur {${\bf
  P}_{n}$}}, C. R. Acad. Sci. Paris S\'{e}r. A-B \textbf{291} (1980), no.~2,
  A125--A128.

\bibitem{MR954659}
D.~Eisenbud and J.~Harris, \emph{Vector spaces of matrices of low rank}, Adv.
  in Math. \textbf{70} (1988), no.~2, 135--155.

\bibitem{elashvili}
A.G. \`Ela\v{s}vili, \emph{Stationary subalgebras of points of general position
  for irreducible linear {L}ie groups}, Funkcional. Anal. i Prilo\v{z}en.
  \textbf{6} (1972), no.~2, 65--78.

\bibitem{EHS}
G.~Elencwajg, A.~Hirschowitz, and M.~Schneider, \emph{Les fibr\'es uniformes de
  rang au plus {$n$} sur {${\bf P}_{n}({\bf C})$} sont ceux qu'on croit},
  Vector bundles and differential equations ({P}roc. {C}onf., {N}ice, 1979),
  Progr. Math., vol.~7, Birkh\"{a}user, Boston, Mass., 1980, pp.~37--63.

\bibitem{PhE}
Ph. Ellia, \emph{Sur les fibr\'{e}s uniformes de rang {$(n+1)$} sur {${\bf
  P}^{n}$}}, M\'{e}m. Soc. Math. France (N.S.) (1982), no.~7, 60.

\bibitem{MR1174900}
Ph. Ellia and A.~Hirschowitz, \emph{Voie ouest. {I}. {G}\'{e}n\'{e}ration de
  certains fibr\'{e}s sur les espaces projectifs et application}, J. Algebraic
  Geom. \textbf{1} (1992), no.~4, 531--547.

\bibitem{MR1717579}
Ph. Ellia, A.~Hirschowitz, and L.~Manivel, \emph{Probl\`eme de
  {B}rill-{N}oether pour les fibr\'{e}s de {S}teiner. {A}pplications aux
  courbes gauches}, Ann. Sci. \'{E}cole Norm. Sup. (4) \textbf{32} (1999),
  no.~6, 835--857.

\bibitem{MR3536970}
Ph. Ellia and P.~Menegatti, \emph{Spaces of matrices of constant rank and
  uniform vector bundles}, Linear Algebra Appl. \textbf{507} (2016), 474--485.

\bibitem{MR611278}
G.~Ellingsrud and S.A. Str\o~mme, \emph{Stable rank-{$2$} vector bundles on
  {${\bf P}^{3}$} with {$c_{1}=0$} and {$c_{2}=3$}}, Math. Ann. \textbf{255}
  (1981), no.~1, 123--135.

\bibitem{MR480478}
G.~Gotzmann, \emph{Eine {B}edingung f\"{u}r die {F}lachheit und das
  {H}ilbertpolynom eines graduierten {R}inges}, Math. Z. \textbf{158} (1978),
  no.~1, 61--70.

\bibitem{gruson-skiti}
L.~Gruson and M.~Skiti, \emph{{$3$}-instantons et r\'{e}seaux de quadriques},
  Math. Ann. \textbf{298} (1994), no.~2, 253--273.

\bibitem{Hochster-Laksov}
M.~Hochster and D.~Laksov, \emph{The linear syzygies of generic forms}, Comm.
  Algebra \textbf{15} (1987), no.~1-2, 227--239.

\bibitem{MR757476}
H.J. Hoppe, \emph{Generischer {S}paltungstyp und zweite {C}hernklasse stabiler
  {V}ektorraumb\"{u}ndel vom {R}ang {$4$} auf {${\bf P}_{4}$}}, Math. Z.
  \textbf{187} (1984), no.~3, 345--360.

\bibitem{landsberg-manivel-bounded}
J.M. Landsberg and L.~Manivel, \emph{Equivariant spaces of matrices of constant
  rank}, arXiv:2212.04228.

\bibitem{Marques-MiroRoig}
P.~Macias~Marques and R.M. Mir\'{o}-Roig, \emph{Stability of syzygy bundles},
  Proc. Amer. Math. Soc. \textbf{139} (2011), no.~9, 3155--3170.

\bibitem{manolache}
N.~Manolache, \emph{Globally generated vector bundles on {${\bf P}^{3}$} with
  $c_1=3$}, arXiv:1202.5988.

\bibitem{miroroig-marchesi}
S.~Marchesi and R.M. Mir\'{o}-Roig, \emph{Uniform {S}teiner bundles}, Ann.
  Inst. Fourier (Grenoble) \textbf{71} (2021), no.~2, 447--472. \MR{4353911}

\bibitem{MR2375719}
R.M. Mir\'{o}-Roig, \emph{Determinantal ideals}, Progress in Mathematics, vol.
  264, Birkh\"{a}user Verlag, Basel, 2008.

\bibitem{schreyer}
F.-O. Schreyer, \emph{Geometry and algebra of prime {F}ano 3-folds of genus
  12}, Compositio Math. \textbf{127} (2001), no.~3, 297--319.

\bibitem{su1}
J.C. Sierra and L.~Ugaglia, \emph{On globally generated vector bundles on
  projective spaces}, J. Pure Appl. Algebra \textbf{213} (2009), no.~11,
  2141--2146.

\bibitem{su2}
\bysame, \emph{On globally generated vector bundles on projective spaces {II}},
  J. Pure Appl. Algebra \textbf{218} (2014), no.~1, 174--180.

\bibitem{VV}
A.~Van~de Ven, \emph{On uniform vector bundles}, Math. Ann. \textbf{195}
  (1972), 245--248.

\bibitem{wall-theta}
C.T~C. Wall, \emph{Nets of quadrics, and theta-characteristics of singular
  curves}, Philos. Trans. Roy. Soc. London Ser. A \textbf{289} (1978),
  no.~1357, 229--269.

\bibitem{MR878293}
R.~Westwick, \emph{Spaces of matrices of fixed rank}, Linear and Multilinear
  Algebra \textbf{20} (1987), no.~2, 171--174.

\bibitem{MR1374258}
\bysame, \emph{Spaces of matrices of fixed rank. {II}}, Linear Algebra Appl.
  \textbf{235} (1996), 163--169.

\end{thebibliography}

\end{document}